\documentclass[11pt]{article}

\usepackage{stmaryrd}
\usepackage{multirow,mathtools,latexsym, enumerate, enumitem, amsmath, amsthm, amssymb,hyperref, pdftricks,tikz,framed,color,enumitem,enumerate,mleftright}
\usetikzlibrary{arrows.meta, decorations.markings,math} 

\usepackage[utf8]{inputenc}   
\usepackage[T1]{fontenc}      
\usepackage{amsmath,amsthm} 
\usepackage{amssymb,mathrsfs} 
\usepackage{amssymb}
\usepackage{amsfonts}
\usepackage{upgreek}
\usepackage{nicefrac}
\usepackage[a4paper]{geometry}
\usepackage{authblk}
\usepackage{graphicx}
\usepackage{hyperref}
\hypersetup{
  colorlinks = true,
  citecolor = blue,
}
 \usepackage{color}
 \usepackage{algorithm2e}
\usepackage{stmaryrd}

\usepackage{enumitem}
\usepackage{url}
\usepackage{graphicx} 
\usepackage{lmodern} 
\usepackage{xcolor}
\usepackage{bbm}

\usepackage{ifthen}
\usepackage{xargs}


\usepackage{todonotes}

\usepackage{aliascnt}
\usepackage{cleveref}
\usepackage{autonum}
\makeatletter
\newtheorem{theorem}{Theorem}
\crefname{theorem}{theorem}{Theorems}
\Crefname{Theorem}{Theorem}{Theorems}

\newtheorem*{lemma_nonumber*}{Lemma}

\newaliascnt{lemma}{theorem}
\newtheorem{lemma}[lemma]{Lemma}
\aliascntresetthe{lemma}
\crefname{lemma}{lemma}{lemmas}
\Crefname{Lemma}{Lemma}{Lemmas}

\newaliascnt{corollary}{theorem}

\aliascntresetthe{corollary}
\crefname{corollary}{corollary}{corollaries}
\Crefname{Corollary}{Corollary}{Corollaries}

\newaliascnt{proposition}{theorem}
\newtheorem{proposition}[proposition]{Proposition}
\aliascntresetthe{proposition}
\crefname{proposition}{proposition}{propositions}
\Crefname{Proposition}{Proposition}{Propositions}

\newaliascnt{definition}{theorem}

\aliascntresetthe{definition}
\crefname{definition}{definition}{definitions}
\Crefname{Definition}{Definition}{Definitions}

\newaliascnt{remark}{theorem}
\newtheorem{remark}[remark]{Remark}
\aliascntresetthe{remark}
\crefname{remark}{remark}{remarks}
\Crefname{Remark}{Remark}{Remarks}

\newtheorem{example}[theorem]{Example}
\crefname{example}{example}{examples}
\Crefname{Example}{Example}{Examples}

\crefname{figure}{figure}{figures}
\Crefname{Figure}{Figure}{Figures}

\newtheorem{assumption}{\textbf{A}\hspace{-3pt}}
\crefformat{assumption}{{\textbf{A}}#2#1#3}

\Crefname{assumptionG}{\textbf{G}\hspace{-3pt}}{\textbf{G}\hspace{-3pt}}
\crefname{assumptionG}{\textbf{G}}{\textbf{G}}

\newtheorem{assumptionH}{\textbf{H}\hspace{-3pt}}
\Crefname{assumptionH}{\textbf{H}\hspace{-3pt}}{\textbf{H}\hspace{-3pt}}
\crefname{assumptionH}{\textbf{H}}{\textbf{H}}

\newtheorem{assumptionE}{\textbf{E}\hspace{-3pt}}
\Crefname{assumptionE}{\textbf{E}\hspace{-3pt}}{\textbf{E}\hspace{-3pt}}
\crefname{assumptionE}{\textbf{E}}{\textbf{E}}

\Crefname{assumptionB}{\textbf{B}\hspace{-3pt}}{\textbf{B}\hspace{-3pt}}
\crefname{assumptionB}{\textbf{B}}{\textbf{B}}

\Crefname{assumptionQ}{\textbf{Q}\hspace{-3pt}}{\textbf{Q}\hspace{-3pt}}
\crefname{assumptionQ}{\textbf{Q}}{\textbf{Q}}

\def\Pens{\mathcal{P}}









\def\msa{\mathsf{A}}

\def\msx{\mathsf{X}}


\def\mcl{\mathcal{L}}

\newcommand{\mcb}[1]{\mathcal{B}(#1)}

\def\mcx{\mathcal{X}}

\def\mcf{\mathcal{F}}

\def\mcp{\mathcal{P}}


\def\rset{\mathbb{R}}

\def\nset{\mathbb{N}}


\def\rmX{\mathrm{X}}
\def\bfrmX{\boldsymbol{\rmX}}
\def\trmX{\tilde{\rmX}}
\def\rmV{\mathrm{V}}
\def\bfrmV{\boldsymbol{\rmV}}
\def\trmV{\tilde{\rmV}}

\newcommand{\rme}{\mathrm{e}}

\newcommand{\rmD}{\mathrm{D}}

\newcommand{\rmR}{\mathrm{R}}
\newcommand{\rmY}{\mathrm{Y}}

\def\rmP{\mathrm{P}}

\def\rmK{\mathrm{K}}
\def\Krm{\mathrm{K}}
\def\rmA{\mathrm{A}}

\def\rmD{\mathrm{D}}

\def\tKrm{{\mathrm{K}}}
\def\rmS{\mathrm{S}}
\def\rmR{\mathrm{R}}

\def\rmQ{\mathrm{Q}}

\def\rmd{\mathrm{d}}

\def\rml{\mathrm{L}}

\def\rme{\mathrm{e}}

\def\rmC{\mathrm{C}}


\def\Ltt{\mathtt{L}}


\newcommand{\tvnorm}[1]{\| #1 \|_{\mathrm{TV}}}
\newcommand{\tvnormEq}[1]{\left \| #1 \right \|_{\mathrm{TV}}}
\newcommandx{\Vnorm}[2][1=V]{\| #2 \|_{#1}}
\newcommandx{\normpi}[2][2=2]{\left\Vert  #1 \right\Vert_{#2}}
\newcommandx{\normH}[2][2=2]{\left\Vert  #1 \right\Vert}
\newcommandx{\normHLigne}[2][2=2]{\Vert  #1 \Vert}
\newcommandx{\normHLine}[2][2=2]{\Vert  #1 \Vert}
\newcommandx{\normmu}[2][2=2]{\left\Vert  #1 \right\Vert_{#2}}
\newcommandx{\normopmu}[2][2=2]{\left\vvvert  #1 \right\vvvert_{#2}}
\newcommandx{\normopH}[2][2=2]{\left\vvvert  #1 \right\vvvert}
\newcommandx{\normop}[2][2=2]{\Vert  #1 \Vert}
\newcommand{\ps}[2]{\left\langle#1,#2 \right\rangle}
\newcommand{\psLigne}[2]{\langle#1,#2 \rangle}

\newcommandx{\normpiLine}[2][2=2]{\Vert  #1 \Vert_{#2}}
\newcommandx{\normmuLine}[2][2=2]{\Vert  #1 \Vert_{#2}}
\newcommandx{\normopmuLine}[2][2=2]{\vvvert  #1 \vvvert_{#2}}
\newcommandx{\normopHLine}[2][2=2]{\vvvert  #1 \vvvert}
\newcommandx{\normopLine}[2][2=2]{\vvvert  #1 \vvvert}

\newcommandx{\VnormEq}[2][1=V]{\left\| #2 \right\|_{#1}}
\newcommandx{\norm}[2][1=]{\ifthenelse{\equal{#1}{}}{\left\Vert #2 \right\Vert}{\left\Vert #2 \right\Vert^{#1}}}
\newcommandx{\normLigne}[2][1=]{\ifthenelse{\equal{#1}{}}{\Vert #2 \Vert}{\Vert #2\Vert^{#1}}}
\newcommandx{\abs}[2][1=]{\ifthenelse{\equal{#1}{}}{\left\vert #2 \right\vert}{\left\vert #2 \right\vert^{#1}}}
\newcommandx{\absLigne}[2][1=]{\ifthenelse{\equal{#1}{}}{\vert #2 \vert}{\vert #2\vert^{#1}}}

\newcommand{\parenthese}[1]{\left(#1 \right)}

\newcommand{\parentheseDeux}[1]{\left[ #1 \right]}
\newcommand{\defEns}[1]{\left\lbrace #1 \right\rbrace }


\newcommandx\probaMarkovTilde[2][2=]
{\ifthenelse{\equal{#2}{}}{{\widetilde{\mathbb{P}}_{#1}}}{\widetilde{\mathbb{P}}_{#1}\left[ #2\right]}}

\newcommand{\PE}{\mathbb{E}}
\newcommand{\expe}[1]{\PE \left[ #1 \right]}

\newcommandx{\expeE}[2][2=]{\mathbb{E}^{#2}\left[ #1 \right]}
\newcommandx{\expeELigne}[2][2=]{\mathbb{E}^{#2}[ #1 ]}
\newcommand{\expeLigne}[1]{\PE [ #1 ]}

\newcommand{\plusinfty}{+\infty}

\def\eqsp{\;}

\newcommand{\coint}[1]{\left[#1\right)}
\newcommand{\ocint}[1]{\left(#1\right]}
\newcommand{\ooint}[1]{\left(#1\right)}
\newcommand{\ccint}[1]{\left[#1\right]}

\def\tv{\mathrm{tv}}
\newcommandx\sequence[3][2=,3=]
{\ifthenelse{\equal{#3}{}}{\ensuremath{\{ #1_{#2}\}}}{\ensuremath{\{ #1_{#2}, \eqsp #2 \in #3 \}}}}
\newcommandx\sequenceD[3][2=,3=]
{\ifthenelse{\equal{#3}{}}{\ensuremath{\{ #1_{#2}\}}}{\ensuremath{( #1)_{ #2 \in #3} }}}

\newcommandx{\sequencen}[2][2=n\in\N]{\ensuremath{\{ #1_n, \eqsp #2 \}}}
\newcommandx\sequenceDouble[4][3=,4=]
{\ifthenelse{\equal{#3}{}}{\ensuremath{\{ (#1_{#3},#2_{#3}) \}}}{\ensuremath{\{  (#1_{#3},#2_{#3}), \eqsp #3 \in #4 \}}}}
\newcommandx{\sequencenDouble}[3][3=n\in\N]{\ensuremath{\{ (#1_{n},#2_{n}), \eqsp #3 \}}}

\def\iid{i.i.d.}

\def\eg{e.g.}

\def\Idd{\operatorname{I}_d}

\def\rmD{\mathrm{D}}

\def\Tr{\mathrm{Tr}}
\def\generator{\mcl}
\def\Langevin{{\bf{L}}}
\def\Hamiltonian{{\bf{H}}}
\def\generatorL{\generator^{\Langevin}}
\def\Discrete{\bf{D}}
\def\generatorD{\generator^{\Discrete}}
\def\generatorH{\generator^{\Hamiltonian}}

\def\bfe{\mathbf{e}}


\def\trace{\operatorname{Tr}}
\def\Tr{\trace}

\def\transpose{\operatorname{T}}

\newcommand\tcr[1]{\textcolor{red}{#1}}

\newcommand{\1}{\mathbbm{1}}

\newcommandx{\CPE}[4][1=,4=]{{\mathbb E}^{#4}_{#1}\left[#2 \, \middle | #3 \right]} 
\newcommandx{\CPELigne}[4][1=,4=]{{\mathbb E}^{#4}_{#1}[#2\, | #3 ]} 
\newcommandx{\CPVar}[3][1=]{\mathrm{Var}^{#3}_{#1}\left\{ #2 \right\}}
\newcommand{\CPP}[3][]
{\ifthenelse{\equal{#1}{}}{{\mathbb P}\left(\left. #2 \, \right| #3 \right)}{{\mathbb P}_{#1}\left(\left. #2 \, \right | #3 \right)}}

\newcommand{\coupling}[1]{\Gamma\left( #1 \right)}

\def\distance{\ell}
\newcommandx{\wasserstein}[3][1=\distance,3=]{\mathbf{W}_{#1}^{#3}\left(#2\right)}
\newcommandx{\wassersteinLigne}[3][1=\distance,3=]{\mathbf{W}_{#1}^{#3}(#2)}
\newcommandx{\wassersteinD}[1][1=\distance]{\mathbf{W}_{#1}}
\newcommandx{\wassersteinDLigne}[1][1=\distance]{\mathbf{W}_{#1}}

\def\tb{\tilde{b}}
\def\tbg{\tilde{b}_{\gamma}}

\def\bgamma{\bar{\gamma}}

\def\dist{\mathbf{d}}
\newcommand\ceil[1]{\lceil #1 \rceil}
\newcommand\floor[1]{\lfloor #1 \rfloor}

\def\bY{\overline{Y}}

\newcommandx{\gperthmc}[2][1=,2=]{\ifthenelse{\equal{#1}{}}{\Xi}{\ifthenelse{\equal{#2}{}}{\Xi_{h,#1}}{\Xi_{#2,#1}}}}
\newcommandx{\Phiverlet}[2][1=,2=]{\ifthenelse{\equal{#1}{}}{\Phi}{\Phi_{#1}^{\circ (#2)}}}
\newcommandx{\gpertub}[2][1=,2=]{\ifthenelse{\equal{#1}{}}{g}{g_{#1}^{#2}}}
\newcommandx{\Phiverletq}[2][1=,2=]{\ifthenelse{\equal{#1}{}}{\widetilde{\Phi}}{\widetilde{\Phi}_{#1}^{\circ (#2)}}}
\newcommandx{\Phiverletqi}[2][1=,2=]{\ifthenelse{\equal{#1}{}}{\bar{\Psi}}{\bar{\Psi}_{#1}^{\circ (#2)}}}
\newcommandx{\Psiverlet}[2][1=,2=]{\ifthenelse{\equal{#1}{}}{\tilde\uppsi}{\tilde\uppsi_{#1}^{\circ (#2)}}}
\newcommandx{\Psiverletq}[2][1=,2=]{\ifthenelse{\equal{#1}{}}{\widetilde{\Psi}}{\widetilde{\Psi}_{#1}^{\circ (#2)}}}
\newcommandx{\Psiverletqi}[2][1=,2=]{\ifthenelse{\equal{#1}{}}{\bar{\Psi}}{\bar{\Psi}_{#1}^{\circ (#2)}}}
\newcommandx{\Pkerhmc}[2][1=,2=]{\ifthenelse{\equal{#1}{}}{\mathrm{P}}{\mathrm{P}_{#1, #2}}}
\newcommandx{\Qkerhmc}[2][1=,2=]{\ifthenelse{\equal{#1}{}}{\mathrm{Q}}{\mathrm{Q}_{#1, #2}}}
\newcommandx{\tPkerhmc}[2][1=,2=]{\ifthenelse{\equal{#1}{}}{\tilde{\mathrm{P}}}{\tilde{\mathrm{P}}_{#1, #2}}}
\newcommandx{\PkerhmcD}[2][1=,2=]{\ifthenelse{\equal{#1}{}}{\mathrm{K}}{\mathrm{K}_{#1, #2}}}

\def\tP{\tilde{P}}

\def\tQ{\tilde{Q}}

\def\pig{\pi_{\gamma}}

\def\tM{\tilde{M}}
\def\varphibf{\boldsymbol{\varphi}}

\def\varphibfd{\boldsymbol{\varphi}_d}

\def\bfGamma{\boldsymbol{\Gamma}}

\def\bfLambda{\boldsymbol{\Lambda}}
\def\Lambdabf{\boldsymbol{\Lambda}}

\def\proj{\operatorname{proj}}

\def\tell{\tilde{\ell}}
\newcommand{\tcrr}[1]{}
\def\eqdef{=}

\def\Jrm{\mathrm{J}}

\def\Hamil{\mathbf{H}}
\def\bfrmX{\mathbf{X}}
\def\bfrmY{\mathbf{Y}}
\def\trmY{\tilde{\mathrm{Y}}}
\def\trmX{\tilde{\mathrm{X}}}

\newcommandx\CTV[1][1=]{\ifthenelse{\equal{#1}{}}{C_{\tv}}{C_{\tv,#1}}}
\newcommandx\ATV[1][1=]{\ifthenelse{\equal{#1}{}}{A_{\tv}}{A_{\tv,#1}}}
\newcommandx\BTV[1][1=]{\ifthenelse{\equal{#1}{}}{B_{\tv}}{B_{\tv,#1}}}

\def\lambdaTV{\lambda_{\tv}}
\def\Phibf{\boldsymbol{\Phi}}
\def\nFun{\mathtt{n}}


\pdfoutput=1

\usepackage{soul}

\title{Asymptotic bias of inexact Markov Chain Monte Carlo methods in high dimension}
\author[1]{Alain Oliviero Durmus}
\author[2]{Andreas Eberle}
\affil[1]{\small{CMAP, CNRS, Ecole Polytechnique, Institut Polytechnique
de Paris, 91120 Palaiseau, France.}}
\affil[2]{\small{Institut f\"{u}r Angewandte Mathematik \\  Universit\"{a}t Bonn \\ Endenicher Allee 60 \\  53115 Bonn, Germany}}

\begin{document}
\footnotetext[1]{Email: alain.durmus@polytechnique.edu}
\footnotetext[2]{Email: eberle@uni-bonn.de}

\maketitle

 \begin{abstract}
{Inexact Markov Chain Monte Carlo methods rely on Markov chains that 
do not exactly preserve the target distribution. Examples include the unadjusted Langevin algorithm (ULA) and unadjusted Hamiltonian Monte Carlo (uHMC).
 This paper establishes bounds on Wasserstein distances between the
invariant probability measures 
of inexact MCMC methods and their target distributions with a focus on understanding the precise dependence of this asymptotic bias on both
dimension and discretization step size.
Assuming Wasserstein bounds on the convergence to equilibrium of 
either the exact or the approximate dynamics, we show that for both
ULA and uHMC, the asymptotic bias depends on key quantities related to the target distribution or the stationary probability measure of the scheme. As a corollary, we conclude that for models with a limited amount of interactions such as mean-field models, finite range graphical models, and
perturbations thereof, the asymptotic bias has a similar 
dependence on the step size and the dimension as for product measures.
}
\end{abstract}

\section{Introduction}

This paper deals with Markov Chain Monte Carlo (MCMC)
methods based on Markov chains that do not exactly preserve a
target distribution $\pi$ on $\rset^d$. A simple example is the Unadjusted Langevin
algorithm (ULA) where the Markov chain is an Euler-Maruyama (EM) discretization of
an overdamped Langevin diffusion with invariant measure
$\pi$. Alternatively, modifications  of the EM scheme can be applied, such as Runge-Kutta and $\theta$ methods \cite{higham:2000,rossler:2010,zygalakis:2011,debussche:faou:2012,abdulle:vilmart:zygalakis:2014} or a
taming strategy \cite{brosse:et:al:spa::2019}. Further, in recent years, variants of the EM scheme have been developed in recent years specifically for certain forms of target distributions motivated by applications in computational statistics and machine learning \cite{welling2011bayesian,dubey2016variance,durmus:moulines:pereyra:2018,durmus2019analysis}.
 Another
important class of inexact MCMC methods is based on Hamiltonian dynamics which are approximated numerically by a symplectic integrator \cite{duane:et:al:1987,neal:1993,leimkuhler:reich:2004}. Unadjusted Hamiltonian Monte Carlo (uHMC) is one of the most popular schemes in this class. It consists of using the Störmer-Verlet (or leapfrog) integrator in combination with momentum randomizations.

In general, the basic idea underlying inexact MCMC methods is to consider a continuous-time Markov process which is ergodic with respect to $\pi$. Since an exact simulation of the continuous time dynamics is
usually not possible, a discrete time approximation is adopted. Although in principle, it is possible in many cases to incorporate a Metropolis-Hastings accept/reject step to ensure that the target distribution $\pi$ is invariant for the corresponding Markov chain, this is not always convenient since it may lead 
to high rejection probabilities and slow convergence of the method,
{ see the discussion in Section \ref{adjustornotadjust}}. On the other hand, without adjustment, the discretization with time step size $\gamma>0$ usually has an invariant measure $\pi_\gamma$
that is only approximately equal to $\pi$, and approaches $\pi$ in the limit $\gamma\to 0$.

{
In recent years, a lot of work has been done on the analysis of the approximation error of inexact MCMC methods, focusing mainly on the unadjusted Langevin algorithm \cite{Dalalyan,durmus:moulines:2016,durmus:moulines:bernoulli, brosse:et:al:spa::2019,debortoli2019convergence,
MajkaMijatovicSzpruch,pages2020unajusted,mangoubi2017rapid,
BouRabeeSchuh,bourabee2021mixing}.
In most of these works, the analysis of approximation bias is intertwined with the study of convergence to equilibrium. While this approach has yielded meaningful results, the intertwining of contraction properties and bias does not make it clear exactly which factors contribute to the dimension dependence of the resulting bounds and in which way. Therefore, to gain a better understanding, we propose to separate the two effects and to divide the error analysis into two parts:}
quantifying the mixing properties of the Markov chain, and quantifying the distance
between its invariant measure $\pi_\gamma$ and the target distribution $\pi$. The focus of this work is on the second
task. In particular, we provide a careful analysis of Wasserstein distances
between $\pi_{\gamma}$ and $\pi$ and their  dependence on both the dimension $d$ and the 
discretization step size $\gamma$. We will see that the results we obtain and their conclusion depend  crucially on
the type of Wasserstein distance that we consider.
\smallskip

There is already an extensive literature on the bias associated with numerical schemes for SDE, and in particular for Euler-Maruyama discretizations. The seminal works \cite{Talay,TalayTubaro} analyze the
difference between the integrals $\int_{\rset^d} f\, \rmd \pi$ and $\int_{\rset^d} f\, \rmd \pi_\gamma$
for smooth functions $f$.
The regularity requirements for $f$ have been relaxed considerably in later work \cite{BallyTalay}.
In \cite{MattinglyStuartTretyakov} the authors bound the distance between $\pi$ and
$\pi_\gamma$ in metrics weaker than Wasserstein distances, and outline possible approaches to deriving Wasserstein bounds. 
Subsequently, convergence to equilibrium and Wasserstein and total variation bias for Euler-Maruyama discretizations have been studied in several papers including
\cite{durmus:moulines:2016,durmus:moulines:bernoulli,MajkaMijatovicSzpruch,pages2020unajusted}.
\tcr{These two distances are of interest for applications in Bayesian inference. The total variation distance, by definition, allows us to obtain guarantees for the estimates for  highest posterior density regions  produced by MCMC algorithms.  As for the Wasserstein distance, it allows for guarantees when one wants to estimate the mean of the posterior distribution.}
\smallskip

Our goal in the present work is to understand more precisely and more generally the order in 
the step size $\gamma$ and the dimension $d$ of Wasserstein distances between $\pi$ and $\pi_\gamma$. To this end, we follow a simple approach
outlined for example in \cite[Remark 6.3]{MattinglyStuartTretyakov}, which is based on a triangle inequality trick, see \Cref{theo:asymptotic_bias_altern} below. To implement this approach,
we need two ingredients: a bound on the convergence to equilibrium in Wasserstein distance for either  the exact dynamics or its numerical approximation, and a bound on the finite time accuracy
of the approximation.  Bounds 
of the first type have been derived systematically in recent years in various
situations \cite{eberle2016, durmus:moulines:2016, durmus:moulines:bernoulli, EberleMajka, MajkaMijatovicSzpruch,debortoli2019convergence,EberleGuillinZimmerAOP,BoEbZi2020,BouRabeeSchuh}. Our main contribution is therefore a careful study 
of the finite time Wasserstein accuracy in high dimension.\smallskip

To explain our main results, we start with a simple but important example 
which can be easily analyzed. Suppose that
$\pi =\mu^d= \bigotimes_{i=1}^d \mu$ is a $d$-fold product of a probability measure $\mu$
on $\mathbb R$ which is absolutely continuous with respect to the Lebesgue measure, with density proportional to $\rme^{-V}$ where $V$ is a continuously differentiable function. Thus, $\pi$ admits a density proportional
to $\rme^{ -U}$ where for any $x=(x_i)_{i=1}^d \in \rset^d$,
\begin{equation}
  \label{eq:U_iid}
  U(x)=\sum_{i=1}^d V(x_i) \eqsp.
\end{equation}
 Under mild assumptions on $V$, the measure
$\pi$ is invariant for the overdamped Langevin diffusion defined by the SDE
$$\rmd Y_t= - \nabla U (Y_t)\, \rmd t\, +\, \sqrt 2\, \rmd B_t \eqsp,$$
where $(B_t)_{t \geq 0}$ is a Brownian motion in $\mathbb R^d$.
Now consider the Euler discretization
\begin{equation}
  \label{eq:def_euler_marua}
  X_{k+1}= X_k - \gamma \nabla U(X_k) + \sqrt{2\gamma}\, G_{k+1} \eqsp,
\end{equation}
where  $\gamma >0$ is the step size, and $(G_k)_{k \geq 1}$ is a sequence of independent
standard normal random variables in $\mathbb R^d$. The recursion \eqref{eq:def_euler_marua}
defines a Markov chain with state space $\mathbb R^d$ and transition kernel
$\rmR_{\gamma}$. 
The \emph{unadjusted Langevin algorithm} (ULA) consists in simulating the Markov chain $(X_k)_{k\geq 0}$
 to get approximate samples from $\pi$.\smallskip

{ Recall that for a metric 
$\dist :\mathbb R^d\times \mathbb R^d\to \rset_+$, $p\in [1,\plusinfty )$,
and probability measures 
$\mu ,\nu\in\mcp(\mathbb R^d)$, the $\rml^p$ Wasserstein distance of order $p $ associated with $\dist$ is defined by
\begin{equation}
  \wasserstein[p,\dist]{\mu,\nu} = \inf\defEns{\int_{\mathbb R^d\times\mathbb R^d} \dist (x,y)^p\,  \rmd \zeta(x,y) \, : \, \zeta \in \coupling{\mu,\nu} }^{1/p} \eqsp,
\end{equation}
where
$\coupling{\mu,\nu}$ is the set of all couplings of $\mu$ and $\nu$,
i.e., all probability measures on $\mathbb R^d\times\mathbb R^d$ with
marginals $\mu$ and $\nu$.
The $\rml^p$ Wasserstein distance associated with the
Euclidean distance is denoted by $\mathbf W_p$.}
Under mild assumptions, it can be shown that the Markov chain $(X_k)_{k \geq 0}$ defined by \eqref{eq:def_euler_marua} has a unique invariant probability 
measure $\pi_\gamma$, and the $\rml^p$ Wasserstein distance
$\wassersteinD[p](\pi_\gamma ,\pi)$ is of order $O(\gamma )$ for every $p\in [1,2]$. 
This is well-known \cite{durmus:moulines:bernoulli}, and follows also from the results below.
We are interested in the precise dependence of the corresponding 
bounds on both the dimension and the step size. In the particular case where $U$ is of the form \eqref{eq:U_iid}, the analysis is relatively simple. Indeed, it is easy to verify that under mild assumptions on $V$,
$\pi_\gamma$ is the $d$-fold product of the invariant measure $\mu_\gamma$ corresponding to ULA with one-dimensional target distribution $\mu$, and, therefore,
$$\wassersteinD[2](\pi_\gamma ,\pi)^2= \wassersteinD[2](\mu^d_\gamma ,\mu^d)^2= d\, \wassersteinD[2](\mu_\gamma ,\mu )^2\eqsp.$$
Thus for $p=2$, $\wassersteinD[p](\pi_\gamma ,\pi)$ is of order $O(d^{1/2}\gamma )$, and the same 
holds for any $p\le 2$, since in this case, $\wassersteinD[p]\le \wassersteinD[2]$.
More generally, for any $q \geq 1$, we can endow $\mathbb R^d$ with  either the $\ell^q$ distance
\begin{equation}
  \label{eq:def_ell_q}
\textstyle {\ell^q}(x,y)= \left\| x-y\right\|_{\ell^q}= \left(\sum_{i=1}^d|x_i-y_i|^q\right)^{1/q}\eqsp,
\qquad x,y\in\mathbb R^d\eqsp,
\end{equation}
or the normalized $\tell^q$ distance
\begin{equation}
  \label{eq:def_tell_q}
  \textstyle {\tell^q}(x,y)= \left\| x-y\right\|_{\tell^q}= \left( d^{-1}\sum_{i=1}^d|x_i-y_i|^q\right)^{1/q} \eqsp,
\qquad x,y\in\mathbb R^d\eqsp,
\end{equation}
and consider the corresponding Wasserstein distances $\wassersteinD[p,\ell^q]$ and $\wassersteinD[p,\tell^q]$ of order $p$
on the space of probability measures on $\mathbb R^d$.
Note that for any $q\in [1,2]$, we have $\| x\|_{\ell^q}= d^{1/q}\| x\|_{\tell^q}$ and $\| x\|_{\tell^q}\le \| x\|_{\tell^2}$. Therefore,
for any $p,q\in [1,2]$,
\begin{eqnarray*}
\wassersteinD[p,\tell^q](\pi_\gamma ,\pi)& \le & \wassersteinD[2,\tell^2](\pi_\gamma ,\pi)\ 
= d^{- 1/2}\,\wassersteinD[2](\pi_\gamma ,\pi) \  \in\ O(\gamma )\eqsp,\\
\wassersteinD[p,\ell^q](\pi_\gamma ,\pi)& =& d^{ 1/q}\, \wassersteinD[p,\tell^q](\pi_\gamma ,\pi)
\ \in\ O(d^{1/q}\gamma ) \eqsp. 
\end{eqnarray*}
On the other hand, an explicit computation in the case where $\mu$ and $\mu_\gamma$ are
Gaussian measures shows that, at least for $q=2$, this order is sharp, see Section \ref{sec:gauss} below. Thus in the product case, to obtain an accurate approximation of the invariant 
measure w.r.t.\ the $\wassersteinD[p,\ell^q]$ distance,
the step size $\gamma$ in the unadjusted Euler scheme should be chosen of order
$O(d^{-1/q})$, whereas an accurate approximation in the $\wassersteinD[p,\tell^q]$
distance can be achieved with a step size that is independent of the dimension.
It follows that if one is only interested in 
approximating integrals $\int_{\rset^d} f\, \rmd\pi$ for
functions $f:\mathbb R^d\to\mathbb R$ that are Lipschitz continuous w.r.t.\ the $\tell^q$ metric with a Lipschitz constant that
does not depend on the dimension $d$, then the step size can be chosen \emph{independently of $d$}. This is often the case in molecular dynamics simulations when $f$ is an 
intensive quantity. 
Examples include averages $f(x)=\frac 1d\sum_{i=1}^d\Phi (x_i)$, and more generally, U-statistics 
$f(x)={d\choose k }^{-1}\sum_{1\le i_1<i_2<\ldots <i_d}^d\Phi (x_{i_1},\ldots ,x_{i_k} )$, where $k\in \{ 1,\ldots ,d\}$ is fixed, and $\Phi  :\rset^k\to\rset $ is Lipschitz continuous.
If, on the other hand, one is interested in the integrals of functions
that are Lipschitz continuous w.r.t.\ the $\ell^q$ metric with a fixed dimension free Lipschitz constant,
then a step size of order $O(d^{-1/q})$ is required. This scenario is more common in 
applications in Bayesian statistics and machine learning \cite{robert:2007,barber:2012}. 
\smallskip

An alternative to ULA is the \emph{unadjusted Hamiltonian Monte Carlo} algorithm (uHMC) \cite{duane:et:al:1987,neal2011mcmc,BoSaActaN2018,BoEbZi2020,DurmusMoulinesSaksman} which is based on the Hamiltonian flow $(\uppsi_t)_{t \geq 0}$ associated to the 
unit mass Hamiltonian $H(q,p)=U(q)+|p|^2/2$, 
i.e., $\uppsi_T(q_0,p_0) = (q_T,p_T)$ where $(q_t,p_t)_{t \geq 0}$ is the solution of the ordinary differential equation 
$\frac{\rmd}{\rmd t} (q_t,p_t)  =  (p_t,-\nabla U(q_t))$ with initial value $(q_0,p_0)$. Fix $T >0$, let $(G_k)_{k \geq 1}$ be a sequence of independent standard normal random variables, and denote by $\proj_q : \rset^{2d} \to \rset$ the projection onto the first $d$ components. Then the recursion $  Q_{k+1} = \proj_q(\uppsi_T(Q_k,G_{k+1}))$
 defines a Markov chain for which $\pi$ is invariant. This Markov chain 
 corresponds to the \emph{exact Hamiltonian Monte Carlo (xHMC)} algorithm.
To be able to carry out numerical 
computations, the Hamiltonian flow is approximated using the Verlet scheme with a given time step size $\gamma >0$, or an alternative integrator, see 
Section \ref{sec:uHMC}. The MCMC method using the Markov chain defined as above, 
but with the exact Hamiltonian flow $(\uppsi_t)_{t \geq 0}$ replaced by its numerical approximation $(\tilde\uppsi_t)_{t \geq 0}$, i.e., $  \tilde{Q}_{k+1} = \proj_q(\tilde{\uppsi}_T(  \tilde{Q}_k,G_{k+1}))$,
is referred to as the \emph{unadjusted Hamiltonian Monte Carlo (uHMC)} algorithm.
It can be shown under mild assumptions that the corresponding transition kernel $  \Krm_{T,\gamma}$
has an invariant probability measure $\pi_{T,\gamma}$ such that 
$\wassersteinD[p](\pi_{T,\gamma },\pi)$ is of order $O(\gamma^2 )$ for any $p\in [1,2]$. The improved order compared to ULA comes from
the fact that the Verlet scheme is a higher order integrator. Once more,
we are interested in the precise dependence of the corresponding 
bounds on the dimension and the step size. 
In the case where $\pi$ is a product measure associated with $U$ of the form \eqref{eq:U_iid}, $\pi_{T,\gamma }$ is also a $d$-fold product of the
invariant measure $\mu_{T,\gamma}$ associated with uHMC with target distribution $\mu$. Thus
following similar arguments as for ULA, we obtain
\begin{align}
\wassersteinD[2](\pi_{T,\gamma },\pi)& = \wassersteinD[2](\mu^d_{T,\gamma} ,\mu^d)= d^{1/2}\, \wassersteinD[2](\mu_{T,\gamma },\mu )\ \in\ O(d^{1/2}\gamma^2)\eqsp,\\
\wassersteinD[p,\tell^q](\pi_{T,\gamma } ,\pi)& \le  \wassersteinD[2,\tell^2](\pi_{T,\gamma } ,\pi)\ 
= d^{- 1/2}\,\wassersteinD[2](\pi_{T,\gamma } ,\pi) \  \in\ O(\gamma^2 ) \eqsp,\\
\wassersteinD[p,\ell^q](\pi_{T,\gamma } ,\pi)& = d^{ 1/q}\, \wassersteinD[p,\tell^q](\pi_{T,\gamma } ,\pi)
\ \in\ O(d^{1/q}\gamma^2 )\eqsp. 
\end{align}
Again, these bounds are sharp if $\mu$ is a Gaussian measure and $q=2$, see Section 
\ref{sec:gauss}.
Thus in the product case, the situation is completely analogous for uHMC
as for ULA, except that the dependence of the orders on $\gamma $ is better for uHMC. In particular,
for an accurate approximation of the invariant 
measure w.r.t.\ the $\wassersteinD[p,\ell^q]$ distance,
the discretization step size $\gamma$ in uHMC should be chosen of order
$O(d^{-1/(2q)})$, whereas an accurate approximation in 
$\wassersteinD[p,\tell^q]$ can be achieved again with a step size that is
independent of the dimension.
\smallskip

Our goal in this paper is to study under which assumptions
results similar to the ones described above hold. For ULA as well as for unadjusted Hamiltonian Monte Carlo, we will see that in the general case where $\pi$ admits a smooth density proportional to $\rme^{-U}$ with respect to the Lebesgue measure, the dimension dependence enters in an 
explicit way through
some key quantities depending on $\nabla U$. In particular, $|\Delta\nabla U|^2$ turns out to be crucial for controlling the dimension dependence -- \tcr{see the discussion after \Cref{theo:bias_final_ula_1} and Section \ref{example:EulerhighD}}.  As a consequence, we can show that for a broad class of models, the
dimension dependence is under appropriate assumptions \emph{of the same order as in the product case}. 
Besides product models,
this class of \emph{``nice'' models} includes finite range graphical models,
mean-field models, and their perturbations (e.g.,
finite dimensional projections of 
measures on infinite dimensional spaces  
that are absolutely continuous w.r.t.\ a Gaussian reference measure), see  \Cref{example:EulerhighD}. In particular, to the authors' knowledge, the class of models that we identify seems to include essentially all models
for which scaling limits of Metropolis-Hastings algorithms have been established; see for example \cite{RobertsGelmanGilks,RobertsRosenthal,YangRobertsRosenthal,
PillaiStuartThiery,BeskosPillaiRobertsSanzSernaStuart}. On the other hand, there is a more general class of models
for which our bounds have a worse dimension dependence as in the product
case. We expect that this is not a coincidence but that the dimension 
dependence of the asymptotic bias may be  generically worse.

Our main results are stated in Section \ref{sec:mainresults}. In Section
\ref{sec:appl-exampl}, we study the resulting dimension dependence for 
concrete classes of models, {and we compare what is known for unadjusted and for Metropolis-adjusted methods}. Most of the proofs of our results are gathered in \Cref{sec:proofs}.

\section*{Notation}
If $\msx$ is a topological space then we denote by $\mathcal B(\msx )$ the
corresponding Borel $\sigma$-field, and by $\mcp(\msx)$ the set
of probability measures on $(\msx,\mcb{\msx})$. 
The Euclidean norm and the Euclidean inner product on $\rset^d$ are denoted by $\absLigne{\cdot}$ and $\psLigne{\cdot}{\cdot}$ respectively, 
and we set
$\mcp_{p}(\rset^d) = \{ \mu \in \mcp(\rset^d) \, : \, \int_{\rset^d}
\abs[p]{x} \mu(\rmd x) < \plusinfty \}$.  
We denote by
$\rmC^k(\rset^d, \rset^m)$ the set of $k$-times continuously
differentiable functions from $\rset^d$ to $\rset^m$, and
$\rmC^k(\rset^d)$ stands for $\rmC^k(\rset^d, \rset)$. For
$f :\rset^d \to \rset^m$, denote by
$\nabla f : \rset^d \to \rset^{d \times m}$ the gradient of $f$ and $\Delta f$ the vector Laplacian of $f$ if they exist.
For any function $f:\rset^d\to \rset^m$, $\partial_if$ denotes the partial derivative with respect to the $i$-th variable of $f$ and $\rmD f$ is the differential of $f$.  $\mathrm{div}$ stands for the divergence operator defined by $\mathrm{div}(\psi) = \sum_{i=1}^d \partial_i \psi_i$, where $\psi_i$ is the $i$-th component of $\psi$. 
$\ceil{\cdot}$ and $\floor{\cdot}$ stands for the upper and lower 
integer part, respectively.
For any matrix $A = (A_{i,j})_{i,j=1}^d \in\rset^{d\times d}$, $\mathrm{Tr}(A) = \sum_{i=1}^d A_{i,i}$ denotes the trace of $A$.
Finally, we denote by  $\varphibfd ( x)= (2\uppi)^{-d/2}\exp(-\abs{x}^2/2)$ the density of the $d$-dimensional standard normal distribution.

\section{Main results }\label{sec:mainresults}

Before specializing to more specific settings, 
we start with some simple but important general observations that are the basis for all the results below. Let  $(\msx,\mcx)$ be a measurable space, and suppose that
$ \mathbf W :\mcp(\msx)\times \mcp(\msx) \to [0, \plusinfty]$ is a distance function on the space $\mcp(\msx)$ consisting of all probability measures on $(\msx,\mcx)$.
Note that we allow the value infinity for the distance. 
The bounds on distances between invariant measures that we derive below are all based
on the following lemma.  
  

\begin{lemma}[The triangle inequality trick]
  \label{theo:asymptotic_bias_altern}
Let $\rmQ$  and $\rmS$ be Markov transition kernels
on  $(\msx,\mcx)$ with invariant probability measures  $\pi_{\rmQ}$ and $\pi_{\rmS}$,  respectively.
Suppose that there exist functions
$\varphi ,\varepsilon : \nset \to \rset_+$ with
$\inf_{n \in \nset} \varphi(n) < 1$ such that for any $n \in \nset$,
\begin{eqnarray}
  \label{eq:varphi_bound}
  \mathbf W({\pi_{\rmS}\rmQ^n , \pi_{\rmQ}}) &\leq & \varphi (n) \mathbf W({\pi_{\rmS}, \pi_{\rmQ}}) \eqsp, \qquad \text{and}\\
  \mathbf W({\pi_{\rmS}\rmS^n , \pi_{\rmS}\rmQ^n}) &\leq & \varepsilon (n) \eqsp. \label{eq:epsilon_accuracy}
\end{eqnarray}
Then,
  \begin{eqnarray}\label{eq:accuracyinvariantmeasure2}
   \mathbf W({\pi_{\rmS},\pi_{\rmQ}})& \leq & 
  \inf\left\{ \frac{\varepsilon (n)}{1-\varphi (n)}: n\in\nset\text{ with }\varphi (n)<1\right\} .
  \end{eqnarray}
\end{lemma}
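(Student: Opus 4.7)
The plan is to exploit the invariance identities $\pi_S S^n = \pi_S$ and $\pi_Q Q^n = \pi_Q$ together with a triangle inequality passing through the auxiliary measure $\pi_S Q^n$. First, for any $n \in \mathbb{N}$, I would use invariance to rewrite the target distance as $\mathbf{W}(\pi_S, \pi_Q) = \mathbf{W}(\pi_S S^n, \pi_Q Q^n)$, insert $\pi_S Q^n$ between these two measures, and apply the triangle inequality for $\mathbf{W}$ to obtain
\begin{equation*}
\mathbf{W}(\pi_S, \pi_Q) \leq \mathbf{W}(\pi_S S^n, \pi_S Q^n) + \mathbf{W}(\pi_S Q^n, \pi_Q Q^n).
\end{equation*}

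Next, I would bound the first summand by $\varepsilon(n)$ via \eqref{eq:epsilon_accuracy}, and bound the second by $\varphi(n)\,\mathbf{W}(\pi_S, \pi_Q)$ by first rewriting $\mathbf{W}(\pi_S Q^n, \pi_Q Q^n) = \mathbf{W}(\pi_S Q^n, \pi_Q)$ (again using invariance of $\pi_Q$ under $Q$) and then invoking \eqref{eq:varphi_bound}. Combining these produces the self-referential inequality
\begin{equation*}
\mathbf{W}(\pi_S, \pi_Q) \leq \varepsilon(n) + \varphi(n)\,\mathbf{W}(\pi_S, \pi_Q).
\end{equation*}
Restricting to the non-empty set of indices $n$ with $\varphi(n) < 1$ (guaranteed by the standing hypothesis $\inf_n \varphi(n) < 1$) and rearranging gives $\mathbf{W}(\pi_S, \pi_Q) \leq \varepsilon(n)/(1-\varphi(n))$. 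Taking the infimum over such $n$ yields \eqref{eq:accuracyinvariantmeasure2}.

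The conceptual content is therefore elementary: both $\pi_S$ and $\pi_Q$ are fixed points of their respective kernels, and one compares them by evolving each for $n$ common steps under the ``correct'' kernel $Q$ so that $\pi_Q$ returns to itself while $\pi_S$ moves only by an amount controlled by the finite-time accuracy $\varepsilon(n)$; contraction under $Q$ then converts this one-shot error into a bound on the invariant-measure discrepancy. The one subtle point I anticipate, and which I expect to be the main (mild) technical obstacle, is that the rearrangement step presupposes $\mathbf{W}(\pi_S, \pi_Q) < \infty$: from $a \leq \varepsilon + c\, a$ with $c \in (0,1)$ one can only conclude $a \leq \varepsilon/(1-c)$ if $a$ is known a priori to be finite. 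When the right-hand side of \eqref{eq:accuracyinvariantmeasure2} is $+\infty$ the claim is vacuous, so one may restrict attention to the case where some $n_0$ satisfies $\varepsilon(n_0) < \infty$ and $\varphi(n_0) < 1$; finiteness of $\mathbf{W}(\pi_S, \pi_Q)$ then follows either from a standing assumption that both invariant measures lie in an appropriate $\mathcal{P}_p(\msx)$ (as will be the case in all concrete applications in the paper), or, when \eqref{eq:varphi_bound} is upgraded to a genuine contraction of $Q^{n_0}$ on probability measures, by iterating the triangle inequality along the orbit $(\pi_S Q^{k n_0})_{k \geq 0}$ to produce a convergent geometric series.
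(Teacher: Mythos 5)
Your proof is correct and follows essentially the same route as the paper: insert the auxiliary measure $\pi_S Q^n$ between $\pi_S$ and $\pi_Q$ (using invariance of both fixed points), apply the triangle inequality, bound the two pieces by \eqref{eq:epsilon_accuracy} and \eqref{eq:varphi_bound}, and rearrange. Your remark on the finiteness caveat (that $a \le \varepsilon + \varphi\, a$ with $\varphi < 1$ only yields $a \le \varepsilon/(1-\varphi)$ once $a < \plusinfty$ is known) is a legitimate point the paper glosses over in its one-line "rearranging" step, but it does not change the substance of the argument.
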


\begin{proof}
By the triangle inequality and the invariance of $\pi_{\rmQ}$ and $\pi_{\rmS}$ w.r.t.\ $\rmQ$ and $\rmS$, we get that for any $n \in \nset$, 
\begin{equation}
  \label{eq:asymptotic_bias_0_1_altern}
     \mathbf W({\pi_{\rmQ},\pi_{\rmS}} )\leq      \mathbf W({\pi_{\rmQ},\pi_{\rmS} \rmQ^n}) +  \mathbf W({\pi_{\rmS} \rmQ^n, \pi_{\rmS}}) \le 
     \varphi (n) \mathbf W({\pi_{\rmQ}  ,\pi_{\rmS}  } )+  \varepsilon (n)\eqsp. 
  \end{equation}
The conclusion follows by rearranging and minimizing over $n$.
\end{proof}

Based on \Cref{theo:asymptotic_bias_altern}, 
if we have a bound $\varphi (n)$ quantifying the convergence to equilibrium for the Markov chain
with transition kernel $\rmQ$,
then we can derive upper bounds on the distance $    \mathbf W(\pi_{\rmS},\pi_{\rmQ})$ by controlling the accuracy $\varepsilon (n)$ for the approximation of the stationary Markov chain with initial distribution
$\pi_{\rmS}$ and transition kernel $\rmS$ by 
 the Markov chain with the same initial distribution
and transition kernel $\rmQ$.
%
This approach is not new and appears in variations at several places in the literature, see for example \cite[Remark 6.3]{MattinglyStuartTretyakov} and \cite{JohndrowMattingly}.
Of course, it can also be applied with the r\^oles of $\rmQ$ and $\rmS$ interchanged, which yields different bounds.\smallskip


\begin{example}\label{example:triangle} Suppose that there exist  $A,B,c,\lambda , \gamma\in (0,\plusinfty )$ such that Conditions \eqref{eq:varphi_bound} and \eqref{eq:epsilon_accuracy} are satisfied with $$\varphi (n)=A\exp (-cn\gamma )\quad\text{ and }\quad\varepsilon (n)
  =\gamma B\exp (\lambda n\gamma )\eqsp.$$ Then by choosing
  $n=\left\lceil (c\gamma)^{-1}\{\log (A)+\log (1+c/\lambda
    )\}\right\rceil$, we obtain the upper bound
  \begin{equation}\label{eq:accuracyinvariantmeasure2}
   \mathbf W({\pi_{\rmS},\pi_{\rmQ}})\leq \ 
  \frac{\gamma B \exp (\lambda n\gamma )}{1-A\exp (-cn\gamma )}\ \le 
\gamma B   \rme^{1+\lambda\gamma }A^{\lambda /c}\left( \frac{\lambda}{c}+1\right) \eqsp.
  \end{equation}
  In the applications we are interested in, typically $\gamma$ is a small constant (the discretization step size), and $c<\lambda$.
Note that $A=1$ can be guaranteed by 
choosing the distance $\mathbf W$ in an adequate way, see the examples in \Cref{sec:appl-exampl}.
\end{example}

More generally, we can also apply \Cref{theo:asymptotic_bias_altern} if the distance to equilibrium of the Markov chain with transition kernel $\rmQ$ 
decays subgeometrically:

\begin{example}\label{example:triangle2} Suppose that there exist  $B,\lambda , \gamma\in (0,\plusinfty )$ and a decreasing continuous function $\psi :\mathbb R_+\to\mathbb R_+$ with $\lim_{t\to\plusinfty}\psi (t)=0$ such that \eqref{eq:varphi_bound} and \eqref{eq:epsilon_accuracy} are satisfied with 
$$\varphi (n)=\psi (n\gamma )\quad\text{ and }\quad\varepsilon (n)
=\gamma B\exp (\lambda n\gamma )\eqsp.$$ 
Let $t_{\mathrm{rel}}\eqdef\inf\{ t\ge 0:\psi (t)\le 1/2\}$.
Then, choosing $n\eqdef\left\lceil t_{\mathrm{rel}}/\gamma \right\rceil$, we obtain 
  \begin{equation}\label{eq:accuracyinvariantmeasure3}
   \mathbf W({\pi_{\rmS},\pi_{\rmQ}})\leq \ 
  \frac{\gamma B\exp (\lambda n\gamma )}{1-\psi (n\gamma )}\ \le 
  2 \gamma B\exp\left({\lambda\cdot (t_{\mathrm{rel}}+\gamma)}\right) \eqsp.
  \end{equation}
If $\psi$ is decaying exponentially then this bound is weaker than the one in \eqref{eq:accuracyinvariantmeasure2}.
\end{example}

\smallskip

In this work, our focus is on quantifying the dependence on the dimension of
corresponding bounds for Markov processes on $\mathbb R^d$.
As distance functions on $\mathcal P(\mathbb R^d)$ we consider
$\rml^p$ Wasserstein distances $\mathbf W=\wassersteinD[p,\dist]$
where $p\in [1,2]$ and $\dist : \mathbb R^d\times \mathbb R^d \to \rset_+$ is a lower semicontinuous distance function on $\mathbb R^d$.
It is important to note that there is some flexibility in choosing
the underlying metric $\dist$. It is this flexibility that will often
enable us to satisfy the conditions in Example \ref{example:triangle}
with $A=1$.\smallskip

We assume that $\dist$ is upper bounded by the
Euclidean distance:
\begin{assumption}
    \label{assum:distance} 
    There exists  $C_\dist\in (0,\plusinfty ) $ such that for any
    $x,y\in\mathbb R^d$,
    $$\dist (x,y)\ \le C_\dist |x-y|. $$
 \end{assumption}   
For example, if $\dist $ is the $\ell^q$ distance defined in  \eqref{eq:def_ell_q} for some $q\in [1,2]$, then \Cref{assum:distance} holds with $C_\dist =d^{\frac{1}{q}-\frac{1}{2}}$, and  if $\dist $ is the $\tilde\ell^q$ distance defined in \eqref{eq:def_ell_q}, then $C_\dist=1$.\medskip

Obtaining precise information 
on the dimension dependence of 
$ \wasserstein[p,\dist]{\pi_{\rmS},\pi_{\rmQ}}$
using \Cref{theo:asymptotic_bias_altern} requires 
bounds as stated in \eqref{eq:varphi_bound} and \eqref{eq:epsilon_accuracy} with explicit dimension dependence of $\varphi (n)$
and $\varepsilon (n)$. 
Regarding the former, in recent years, dimension free contractions in appropriate Wasserstein distances have been
proven under different assumptions for various important classes of Markov processes
including overdamped Langevin diffusions and more general Kolmogorov processes
\cite{eberle2016,EberleGuillinZimmerTAMS}, corresponding Euler discretizations \cite{EberleMajka,debortoli2019convergence}, second order Langevin diffusions
\cite{cheng2017underdamped}, and both exact and unadjusted Hamiltonian Monte Carlo
\cite{mangoubi2017rapid,BoEbZi2020,BouRabeeSchuh}. It is well-known
that such contractions immediately imply upper bounds as assumed in 
\eqref{eq:varphi_bound} and \eqref{eq:epsilon_accuracy}. For the reader's convenience, a short proof of this fact is included in
\Cref{sec:proof-crefl}. 

Besides convergence bounds for the reference kernel, 
the second key ingredient for studying the dimension dependence of the distance
between two invariant measures is an accuracy bound as in \eqref{eq:epsilon_accuracy} that quantifies
the distance between the
laws at time $n$ of the corresponding Markov chains started with the same initial 
distribution. Such bounds depend on the approximation that is considered and can only be derived on a case-by-case basis. 
The precise dimension dependence of the function $\varepsilon (n)$ in these bounds in different situations is one of the main contributions of this work.

\subsection{Euler-Maruyama discretizations of stochastic differential equations}
\label{sec:asympt-bias-euler}

Consider a diffusion process $(Y_t)_{t \geq 0}$ on $\mathbb R^d$ that solves a stochastic differential equation (SDE) 
\begin{equation}
  \label{eq:sde}
  \rmd Y_t = b(Y_t)\,  \rmd t + \sqrt 2\,\rmd B_t \eqsp,
\end{equation}
where $(B_t)_{t \geq 0}$ is a $d$-dimensional Brownian motion and $b: \rset^d \to \rset^d$ is a twice continuously differentiable function. We assume that \eqref{eq:sde} admits a unique non-explosive solution $(Y_t)_{t \geq 0}$ for every starting point $x \in \rset^d$. Moreover, we impose the following assumption on the Markov semigroup
$(\rm\rmP_t)_{t \geq 0}$ defined by $(Y_t)_{t \geq 0}$.
\begin{assumptionE}
    \label{assum:diffusion_semigroup_a} $(\rm\rmP_t)_{t \geq 0}$ admits an invariant probability measure $\pi \in \mcp_2(\rset^d)$.
\end{assumptionE}
In particular, the assumption is satisfied if $\pi$ admits a density with respect to the Lebesgue measure of the form
\begin{equation}
  \label{eq:density_pi}
  \pi(\rmd x) = \mathrm{Z}^{-1}\, \rme^{-U(x)} \, \rmd x \eqsp, \qquad \mathrm{Z} = \int \rme^{-U(x)} \rmd x < \plusinfty \eqsp,
\end{equation}
for a function $U\in \rmC^3(\mathbb R^d)$ satisfying
$\int_{\rset^d} (1+|x|^2)\rme^{-U(x)}\, \rmd x <\plusinfty$, and if $b=-\nabla U+\Xi$ for a $\rmC^2$ vector field $\Xi :\mathbb R^d\to\mathbb R^d$ such that $\mathrm{div}(\rme^{-U}\Xi )=0$
(e.g.\ $\Xi =-\Jrm\nabla U$ for an antisymmetric matrix $\Jrm\in\rset^{d\times d}$), see \cite{eberle:lecture:notes:markov}.\smallskip

We consider Euler-Maruyama type discretization schemes for \eqref{eq:sde}, i.e., the class of Markov chains $(X_k)_{k \ge 0}$ defined by the following recursion: for any integer $k \ge 0$,
\begin{equation}
  \label{eq:def_euler_maru}
  X_{k+1} = X_k + \gamma\, \tbg(X_k) + \sqrt{2\gamma}\, G_{k+1} \eqsp,
\end{equation}
where  $\gamma >0$ is the step size, $(G_k)_{k \in\nset}$ is a sequence of independent zero-mean  Gaussian
random variables on $ \rset^d$ with covariance matrix identity, and $\{\tbg : \rset^d \to \rset^d \, : \, \gamma \in\ocint{0,\bgamma} \}$, with $\bgamma >0$, is a
family of approximate drift functions satisfying the following condition.
\begin{assumptionE}
  \label{as:tilde_b_b} There exists a function $\bfGamma : \rset^d \to \rset_+$  such that for any $\gamma >0$ and $x \in \rset^d$,
  \begin{equation}
 \abs{\tbg(x) - b(x)}   \leq \gamma \bfGamma(x) \eqsp.
  \end{equation}
\end{assumptionE}
For the standard Euler-Maruyama scheme, $\tbg = b$ for any $\gamma \in \ocint{0,\bgamma}$, and therefore \Cref{as:tilde_b_b} is satisfied with $\bfGamma = 0$. In the case $b=-\nabla U$, the Euler scheme corresponds to the standard Unadjusted Langevin Algorithm (ULA) \cite{roberts:tweedie:1996}, but as mentioned previously, $b=-(\Idd+\Jrm)\nabla U$ with an antisymmetric matrix $\Jrm\in\rset^{d\times d}$ is also an option to target  $\pi$ of the form \eqref{eq:density_pi}.
Moreover, our conditions also cover the tamed Euler-Maruyama discretization {\cite{brosse:et:al:spa::2019}} for which $\tbg (x)= b(x)/(1+\gamma |{b(x)}|)$. In this case, \Cref{as:tilde_b_b} holds with $\bfGamma (x)= |b(x)|$.\smallskip

The transition kernel of the Markov chain defined by the recursion \eqref{eq:def_euler_maru}
is
\begin{equation}
  \label{eq:def_rmR_gamma}
  \rmR_{\gamma}(x,\msa) \  \tcrr{= \mathcal N(x+\gamma \tbg(x),2\gamma \Idd)[A] \, }= (2\gamma )^{-d/2} \int_{\msa} \varphibfd\parenthese{\frac{y-x-\gamma \tbg(x)}{\sqrt{2\gamma}}} \rmd y \eqsp.
\end{equation}
We assume the following condition on the family $\{ \rmR_{\gamma}  : \gamma \in\ocint{0,\bgamma} \}$.
\begin{assumptionE}
  \label{as:r_gamma}
For every $\gamma \in \ocint{0,\bgamma}$, $\rmR_{\gamma}$ has an invariant probability measure $\pi_{\gamma} \in \mcp_2(\rset^d)$.
\end{assumptionE}

We aim at applying \Cref{theo:asymptotic_bias_altern}
in order to obtain explicit upper bounds on Wasserstein distances of the 
invariant measures
$\pi$ and $\pi_{\gamma}$ for
$\gamma \in \ocint{0,\bgamma}$.  
This can be achieved by choosing either  
$\rmQ = \rmR_{\gamma}$
and $\rmS = \rmP_{\gamma}$ in \Cref{theo:asymptotic_bias_altern}, or, conversely, $\rmQ = \rmP_{\gamma}$
and $\rmS = \rmR_{\gamma}$. 
Both approaches
lead to slightly different results
{that are \emph{not comparable} to each other
, see Theorems \ref{theo:bias_final_ula_1} and 
\ref{theo:bias_final_ula_2} below, respectively.
In particular, one either requires a convergence bound on the approximate
dynamics as assumed in  \eqref{eq:convergence_ula_bound_W_1}, or
a convergence bound on the exact dynamics as assumed in  \eqref{eq:convergence_ula_bound_W_2}.
}

\subsubsection{A first result}
The first main result stated in Theorem \ref{theo:bias_final_ula_1} has a simple form and is relatively
easy to derive but requires stronger assumptions. In particular,
we assume a global Lipschitz condition on the approximate
drift functions $\tbg$. 
\begin{assumptionE}
  \label{as:b_lip}
  There exists  $L \in \mathbb R_+ $ such that for any $x,y \in \rset^d$ and $\gamma \in \ocint{0,\bgamma}$, 
  $$\absLigne{\tbg(x) - \tbg(y)} \leq L \absLigne{x-y} .$$ 
\end{assumptionE}
We consider the extended generator of \eqref{eq:sde} given  for $f \in \rmC^2(\rset^d)$ by
\begin{equation}\label{eq:defgenL}
  \generator^{\Langevin} f\  = \ps{b}{\nabla f} + \Delta f \eqsp. 
\end{equation}
 For any twice continuously differentiable function $F : \rset^d \to \rset^n$, we define $\generatorL F$ component-wise, i.e., $\generatorL F$ is the function from $\rset^d $ to $\rset^n$ with 
$i$-th component given by $\generatorL F_i$ where $F_i$ is the $i$-th component of $F$. Let $\| A\|_{\mathrm F}$ denote the Frobenius (or Hilbert-Schmidt) norm 
of a matrix $A \in\rset^{d \times d}$, i.e.,
$\| A\|_{\mathrm F}^2=\sum_{i,j=1}^dA_{i,j}^2$.
\begin{proposition}
  \label{theo:bound_w_2_euler}
  Assume \Cref{assum:diffusion_semigroup_a}, \Cref{as:tilde_b_b}
  and \Cref{as:b_lip}.
  Then for any $\gamma \in \ocint{0,\bgamma}$ and $n \in \nset$,
\begin{equation}
\label{eq:lem:bound_w_2_euler_res}
\mathbf W_2(\pi \rmP_{n \gamma} , \pi \rmR_{\gamma}^n)\   \leq   \gamma  M_{\Langevin}^{1/2} \rme^{\lambda_{\Langevin} n \gamma}  \eqsp,
\end{equation}
where 
 \begin{align}
   \label{eq:def_lambda_euler}
   \lambda_{\Langevin} & =1+L^2+3 L^2\bgamma /2\, , \\
 M_{\Langevin}  &= \left(6^ {-1}+3\gamma/4 \right) M_1 + 3 M_2/2 + (1+3\gamma/2) M_3 \eqsp,
\end{align}
with
\begin{equation}
  \label{eq:def_m_1_m_2_m_3}
M_1 = \int{|\generatorL b|^2}\, \rmd\pi  \eqsp, \quad 
M_2 = \int{\norm[2]{\rmD b}_{\mathrm F}}\, \rmd\pi \eqsp, \quad M_3 = \int{\bfGamma}^2\, {\rmd\pi} \eqsp.
\end{equation}
\end{proposition}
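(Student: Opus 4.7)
The plan is a \emph{synchronous coupling} of the diffusion with its Euler discretization, combined with an Itô-Taylor expansion of $b$ along paths of $Y$, closed by a discrete Grönwall iteration. This is essentially the classical $L^2$ strong-error analysis of Euler-Maruyama for additive-noise SDE; the only novelty is to keep all constants explicit and in terms of the quantities $M_1, M_2, M_3$.

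First, let $(B_t)_{t\ge 0}$ be a Brownian motion. I would realize $(Y_t)_{t\ge 0}$ with $Y_0\sim\pi$ driven by $B$, and on the same probability space define $X_0 = Y_0$ and $X_{k+1} = X_k + \gamma\tbg(X_k) + \sqrt{2}(B_{(k+1)\gamma} - B_{k\gamma})$. This is a valid coupling of $\pi\rmP_{n\gamma}$ with $\pi\rmR_\gamma^n$, and invariance of $\pi$ under $(\rmP_t)$ gives $Y_{k\gamma}\sim\pi$ for every $k$. Setting $e_k := Y_{k\gamma} - X_k$, it suffices to bound $u_k := \mathbb{E}[|e_k|^2]$. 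Applying Itô's formula componentwise to $b(Y_s)$ on $s\in[k\gamma,(k+1)\gamma]$ and using stochastic Fubini, the one-step error decomposes as
\begin{equation*}
e_{k+1} - e_k \ =\ \gamma[\tbg(Y_{k\gamma}) - \tbg(X_k)] \ +\ \gamma[b(Y_{k\gamma}) - \tbg(Y_{k\gamma})] \ +\ I_k^{(1)}\ +\ I_k^{(2)},
\end{equation*}
where $I_k^{(1)} = \int_{k\gamma}^{(k+1)\gamma}((k+1)\gamma-u)\generatorL b(Y_u)\,du$ and $I_k^{(2)} = \sqrt{2}\int_{k\gamma}^{(k+1)\gamma}((k+1)\gamma-u)\rmD b(Y_u)\,dB_u$.

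Next I would expand $|e_{k+1}|^2 = |e_k|^2 + 2\langle e_k, e_{k+1}-e_k\rangle + |e_{k+1}-e_k|^2$ and take expectations. The key estimates are: (i) the Lipschitz cross term is absorbed via AM-GM, $2\gamma\langle e_k, \tbg(Y_{k\gamma})-\tbg(X_k)\rangle \le \gamma(1+L^2)|e_k|^2$, which is the source of the $1+L^2$ in $\lambda_\Langevin$; (ii) the other two cross terms are handled by Young's inequality with weight $\gamma$, and since $\mathbb{E}[|\gamma(b-\tbg)(Y_{k\gamma})|^2]\le \gamma^4 M_3$ (by \Cref{as:tilde_b_b}) and $\mathbb{E}[|I_k^{(1)}|^2]\le (\gamma^4/3) M_1$ (by Cauchy-Schwarz on the time integral and stationarity), the $\gamma^{-1}$ factor from Young produces only $O(\gamma^3)$ contributions; (iii) crucially, $\mathbb{E}[\langle e_k, I_k^{(2)}\rangle] = 0$ since $e_k$ is $\mathcal{F}_{k\gamma}$-measurable and $I_k^{(2)}$ has zero conditional mean---this is what keeps the final bound at order $\gamma$ rather than $\gamma^{1/2}$; (iv) Itô's isometry yields $\mathbb{E}[|I_k^{(2)}|^2] \le (2\gamma^3/3) M_2$, and stationarity $Y_{k\gamma}\sim\pi$ converts all pointwise estimates into $M_1, M_2, M_3$. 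The quadratic term $|e_{k+1}-e_k|^2$ also contributes a Lipschitz piece $\gamma^2 L^2 |e_k|^2$; grouping this into the exponent by $\gamma\le\bgamma$ produces the $3L^2\bgamma/2$ correction to $\lambda_\Langevin$. Assembling everything I would arrive at a recursion of the form $u_{k+1}\ \le\ (1 + 2\gamma\lambda_\Langevin)\,u_k + 2\gamma^3\lambda_\Langevin M_\Langevin$.

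Starting from $u_0 = 0$, discrete Grönwall then gives $u_n \le \gamma^2 M_\Langevin \exp(2\lambda_\Langevin n\gamma)$, and $\mathbf W_2(\pi\rmP_{n\gamma},\pi\rmR_\gamma^n)\le\sqrt{u_n}$ yields the stated bound. The main technical obstacle is purely book-keeping: to recover the exact coefficients $1/6 + 3\gamma/4$, $3/2$, $1+3\gamma/2$ in $M_\Langevin$, one must track precisely which Young weight is used for each of the four pieces of the one-step decomposition, and which constants are produced by the crude bound $((k+1)\gamma - u)\le\gamma$ versus by the sharper integrals $\int_{k\gamma}^{(k+1)\gamma}((k+1)\gamma - u)^2\,du = \gamma^3/3$ and $\int_{k\gamma}^{(k+1)\gamma}((k+1)\gamma-u)\,du = \gamma^2/2$.
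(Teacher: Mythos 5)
Your proposal follows essentially the same route as the paper's proof: synchronous coupling of the diffusion with its Euler chain started from $\pi$, an Itô expansion of $b(Y_s)-b(Y_{k\gamma})$ whose drift part produces $\generatorL b$, the observation that the martingale increment is orthogonal to the $\mathcal F_{k\gamma}$-measurable error $e_k$, stationarity to reduce pointwise bounds to $M_1,M_2,M_3$, and a discrete Grönwall iteration from $e_0=0$. The only structural difference is cosmetic: you apply stochastic Fubini to express the residual as a single weighted integral $\int ((k+1)\gamma-u)\,(\cdot)\,du$, whereas the paper keeps the iterated double integral and applies Cauchy--Schwarz and Itô's isometry directly to it (see \eqref{eq:lem:bound_w_2_euler_b} and \eqref{eq:lem:bound_w_2_euler_b_a}). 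Both routes give the same orders in $\gamma$, only with slightly different rational prefactors, and the key martingale-orthogonality observation that keeps the bound at order $\gamma$ rather than $\gamma^{1/2}$ is identical in both.

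One caveat on the constants: with the Young weights you specify, the cross-term tally does not recover the paper's exact $\lambda_{\Langevin}=1+L^2+3L^2\bar\gamma/2$. You split $b(Y_{k\gamma})-\tbg(X_k)$ into a Lipschitz piece and an approximation piece and apply Young to each separately, which yields a $|e_k|^2$ coefficient of roughly $\gamma(2+L^2)$, whereas the paper bounds $b(Y_{n\gamma})-\tbg(\bar Y_{n\gamma})$ in one Young step, getting $\gamma(1+2L^2)$; these agree only when $L=1$. Since the proposition states specific values for $\lambda_{\Langevin}$ and $M_{\Langevin}$, you would have to regroup your decomposition to match the paper's $(a+b)^2\le 2(a^2+b^2)$ bound on the combined piece $\{b(Y_{n\gamma})-\tbg(Y_{n\gamma})\}+\{\tbg(Y_{n\gamma})-\tbg(\bar Y_{n\gamma})\}$ in order to land on exactly those constants. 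You flag this book-keeping issue yourself, and the method clearly does produce a bound of the advertised form $\gamma \tilde M^{1/2} e^{\tilde\lambda n\gamma}$ with some explicit, dimension-tracking $\tilde M$ and $\tilde\lambda$, so the gap is a calibration of constants rather than a missing idea.
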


The proof of the proposition is given in \Cref{sec:proof-ula_One}. Of course, it
is well-known that the Euler-Maruyama approximation is accurate of order $O(\gamma )$, see for example \cite{Talay}. The point of Theorem \ref{theo:bound_w_2_euler} is however that the 
explicit form of the prefactor $M_{\Langevin}^{1/2}$ enables us to analyze
precisely the dimension dependence for different classes of models,
see Section \ref{example:EulerhighD} below. 

{Recall the definition of the constant $C_\dist  $ from Assumption  \Cref{assum:distance} above.}
By combining 
\Cref{theo:bound_w_2_euler}, \Cref{theo:asymptotic_bias_altern}, and Examples \ref{example:triangle} and
\ref{example:triangle2} we obtain our first main result.

\begin{theorem}
  \label{theo:bias_final_ula_1}
  Assume \Cref{assum:diffusion_semigroup_a}, \Cref{as:tilde_b_b}, \Cref{as:r_gamma} and \Cref{as:b_lip}, and fix $p\in [1,2]$.
Suppose that $\dist$ is a distance function on $\mathbb R^d$ 
satisfying \Cref{assum:distance}, and assume that there exist 
$A\geq 0$ and $c>0$ such that for
any $\gamma \in \ocint{0,\bgamma}$ and $n\in \mathbb N$,
  \begin{equation}
    \label{eq:convergence_ula_bound_W_1}
    \wassersteinLigne[p,\dist]{\pi_{\gamma} , \pi \rmR_{\gamma}^{n}}\leq A \rme^{-c n \gamma}\wassersteinLigne[p,\dist]{\pi_{\gamma} , \pi} \eqsp.
  \end{equation}
Let $\lambda_{\Langevin}$ and $M_{\Langevin}$ be defined as in  \Cref{theo:bound_w_2_euler}.  Then for any $\gamma \in \ocint{0,\bgamma}$,
  \begin{equation}
     \label{eq:bias_final_ula_1a}
\wassersteinLigne[p,\dist]{\pi_{\gamma} , \pi} \leq \     \gamma C_\dist   M_{\Langevin}^{\frac12}\,  \rme^{1+\lambda_{\Langevin}\gamma }A^{\lambda_{\Langevin} /c}\left(\lambda_{\Langevin}/c+1\right)  \eqsp.
\end{equation}
More generally, suppose instead of \eqref{eq:convergence_ula_bound_W_1}
that there exists a decreasing continuous function $\psi :\mathbb R_+\to\mathbb R_+$ with $\lim_{t\to\plusinfty}\psi (t)=0$ such that  for
any $\gamma \in \ocint{0,\bgamma}$ and $n\in \mathbb N$,
\begin{equation}
    \label{eq:convergence_ula_bound_W_1b}
    \wassersteinLigne[p,\dist]{\pi_{\gamma} , \pi \rmR_{\gamma}^{n}}\leq \psi { (n \gamma )}\wassersteinLigne[p,\dist]{\pi_{\gamma} , \pi} \eqsp.
  \end{equation}
  Let $t_{\mathrm{rel}}\eqdef\inf\{ t\ge 0:\psi (t)\le 1/2\}$.
Then for any $\gamma \in \ocint{0,\bgamma}$,
 \begin{equation}
     \label{eq:bias_final_ula_1b}
\wassersteinLigne[p,\dist]{\pi_{\gamma} , \pi} \leq \     2\gamma\,  C_\dist   M_{\Langevin}^{\frac12} \,  \rme^{\lambda_{\Langevin}\cdot(t_{\mathrm{rel}}+\gamma) }  \eqsp. 
\end{equation}
   
\end{theorem}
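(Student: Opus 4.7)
The plan is to apply \Cref{theo:asymptotic_bias_altern} (the triangle inequality trick) with $\rmQ = \rmR_\gamma$ and $\rmS = \rmP_\gamma$, so that $\pi_\rmQ = \pi_\gamma$ and $\pi_\rmS = \pi$. With these choices, the two conditions \eqref{eq:varphi_bound} and \eqref{eq:epsilon_accuracy} translate into a convergence bound for iterates of $\rmR_\gamma$ towards $\pi_\gamma$ (provided by the hypothesis \eqref{eq:convergence_ula_bound_W_1}, since $\pi_\gamma \rmR_\gamma^n = \pi_\gamma$), and an accuracy bound for the Euler scheme relative to the exact dynamics started at $\pi$ (since $\pi \rmP_{n\gamma} = \pi$ by invariance, so $\mathbf W(\pi \rmS^n, \pi \rmQ^n) = \mathbf W(\pi \rmP_{n\gamma}, \pi \rmR_\gamma^n)$). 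The entire argument is then a matter of reading off constants, with no genuine obstacle.

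First, I would verify the finite-time accuracy bound in the Wasserstein distance $\wassersteinD[p,\dist]$ rather than $\mathbf W_2$. Using \Cref{assum:distance} together with the monotonicity $\mathbf W_p \leq \mathbf W_2$ for $p\in[1,2]$ (which follows from Jensen's inequality applied to an optimal quadratic coupling), one obtains for every coupling $\zeta\in\coupling{\mu,\nu}$
\[
\wassersteinLigne[p,\dist]{\mu,\nu}\ \leq\ C_\dist \,\mathbf W_p(\mu,\nu)\ \leq\ C_\dist\, \mathbf W_2(\mu,\nu).
\]
Combining this with \Cref{theo:bound_w_2_euler} applied to $\mu=\pi\rmP_{n\gamma}$, $\nu=\pi \rmR_\gamma^n$ gives the accuracy bound
\[
\wassersteinLigne[p,\dist]{\pi,\pi\rmR_\gamma^n}\ \leq\ \gamma\, C_\dist\, M_\Langevin^{1/2}\, \rme^{\lambda_\Langevin n\gamma}\,,
\]
which is exactly a bound of the form $\varepsilon(n) = \gamma B \rme^{\lambda_\Langevin n\gamma}$ with $B = C_\dist M_\Langevin^{1/2}$.

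Next, to obtain \eqref{eq:bias_final_ula_1a}, I would invoke \Cref{theo:asymptotic_bias_altern} and specialize to the setting of \Cref{example:triangle}. The contraction assumption \eqref{eq:convergence_ula_bound_W_1} gives $\varphi(n)=A\rme^{-cn\gamma}$, while the previous paragraph gives $\varepsilon(n)=\gamma C_\dist M_\Langevin^{1/2}\rme^{\lambda_\Langevin n\gamma}$. The explicit optimization in \Cref{example:triangle}, carried out with $n=\lceil (c\gamma)^{-1}\{\log A + \log(1+c/\lambda_\Langevin)\}\rceil$, then yields
\[
\wassersteinLigne[p,\dist]{\pi_\gamma,\pi}\ \leq\ \gamma\, C_\dist\, M_\Langevin^{1/2}\, \rme^{1+\lambda_\Langevin\gamma}\, A^{\lambda_\Langevin/c}\,(\lambda_\Langevin/c + 1),
\]
which is \eqref{eq:bias_final_ula_1a}.

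For \eqref{eq:bias_final_ula_1b}, I would replay the same argument but now with the subgeometric convergence bound \eqref{eq:convergence_ula_bound_W_1b}, so that $\varphi(n)=\psi(n\gamma)$ and $\varepsilon(n)$ is as above. Choosing $n=\lceil t_{\mathrm{rel}}/\gamma\rceil$ as in \Cref{example:triangle2} ensures $\psi(n\gamma)\leq 1/2$, and the resulting bound $2\gamma B\exp(\lambda_\Langevin(t_{\mathrm{rel}}+\gamma))$ delivers \eqref{eq:bias_final_ula_1b} with $B = C_\dist M_\Langevin^{1/2}$. Since both reductions are essentially mechanical once \Cref{theo:bound_w_2_euler} is in hand, the only nontrivial input is the quantitative $\mathbf W_2$ estimate on the Euler discretization error over a finite horizon, whose proof is deferred to \Cref{sec:proof-ula_One}.
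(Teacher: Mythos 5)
Your proposal is correct and follows essentially the same route as the paper: apply \Cref{theo:asymptotic_bias_altern} with $\rmQ=\rmR_\gamma$, $\rmS=\rmP_\gamma$, feed in the hypothesis \eqref{eq:convergence_ula_bound_W_1} for $\varphi(n)$, obtain $\varepsilon(n)$ from \Cref{theo:bound_w_2_euler} via $\wassersteinD[p,\dist]\le C_\dist\mathbf W_p\le C_\dist\mathbf W_2$, and read off the final constants from Examples \ref{example:triangle} and \ref{example:triangle2}.
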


Theorem \ref{theo:bias_final_ula_1}  is a direct consequence of   \Cref{theo:bound_w_2_euler} and the bounds in \eqref{eq:accuracyinvariantmeasure2} and \eqref{eq:accuracyinvariantmeasure3}.
In particular, if $L$, $A$, $c$, $\lambda_{\Langevin}$ and $\psi$ are independent of the dimension $d$, then 
the dimension dependence of the upper bounds is determined 
completely by the key quantities $C_\dist$ and $M_\Langevin$.
{In Section \ref{example:EulerhighD}, we will see that for ULA,
the dimension dependence of $M_\Langevin$ relies crucially on bounds for 
$\left|\Delta b(x)\right|^2$ and $\left\| \rmD^2 b(x)\right\|^2_{\mathrm{F}}$
where $b=-\nabla U$, cf.~\eqref{eqLaplace} and \eqref{eqFronbenius2D}.
If these quantities are bounded uniformly of order $O(d)$
then under appropriate assumptions, 
the resulting dimension dependence 
on upper bounds of the standard $\rml^1$ Wasserstein distance $\mathbf W_1(\pi_\gamma ,\pi )$
is of order $O(\gamma d^{1/2})$. 
This is the case for models with a limited amount of interactions such as
product models, mean-field models, finite range graphical models, and
perturbations thereof. On the other hand, for general models where the second partial derivatives of $b$ are bounded, one can only
expect bounds on the above quantities of order $O(d)$ and hence bounds
on the asymptotic bias of order $O(\gamma d)$.}

\subsubsection{An improved result}
In our second main result stated in Theorem \ref{theo:bias_final_ula_2} below, we relax
the assumptions substantially, see the comments below Theorem \ref{theo:bias_final_ula_2}. In contrast to Theorem \ref{theo:bias_final_ula_1},
we only assume a quantitative convergence bound for the diffusion process, and, more importantly,
we replace the global Lipschitz condition in \Cref{as:b_lip}  by the following one-sided Lipschitz condition on $b$.
\begin{assumptionE}
  \label{ass:one_side_lip_b}
  There exists $\kappa >0$ such that for any $x,y \in \rset^d$,
  \begin{equation}
    \label{eq:222}
    \ps{b(x) - b(y)}{x-y} \leq \kappa \norm[2]{x-y} \eqsp.
  \end{equation}
\end{assumptionE}

For any $f \in \rmC^2(\rset^d)$, we define $\generatorD f : \rset^d \times \rset^d \to \rset$,  for $x,y \in \rset^d$ by
\begin{equation}
\label{eq:def_generator_discrete}
  (\generatorD f)(x,y) =   \ps{b(x)}{(\nabla f)(y) } + (\Delta f)(y) \eqsp. 
\end{equation}
For any twice continuously differentiable function $F : \rset^d \to \rset^n$, we define $\generatorD F$ component-wise, i.e., $\generatorD F$ is the function from $\rset^d\times \rset^d $ to $\rset^n$ with 
$i$-th component given by $\generatorD F_i$ where $F_i$ is the $i$-th component of $F$.

\begin{proposition}
  \label{theo:bound_w_2_euler_altern}
    Assume \Cref{assum:diffusion_semigroup_a}, \Cref{as:tilde_b_b}, \Cref{as:r_gamma} and \Cref{ass:one_side_lip_b}.
    Then for any $\gamma \in \ocint{0,\bgamma}$ and $n \in \nset$,
\begin{equation}
\label{eq:lem:bound_w_2_euler_res_altern}
\wasserstein[2]{\pi_{\gamma} \rmP_{n\gamma}, \pi_{\gamma}\rmR_{\gamma}^n} \leq  \gamma^{3/2}(n +1)^{1/2}\tM_{\Langevin}^{1/2}\rme^{1/2+\kappa n\gamma } \leq  \gamma(1+\gamma)^{1/2} \tilde M_{\Langevin}^{1/2} \,  \rme^{(1+\kappa)n\gamma}  \eqsp,
\end{equation}
where
\begin{equation}
  \label{eq:tM_lang}
    \tM_{\Langevin}=\frac 16  \tM_1    +\frac 12\gamma^{1/2}\tM_2^{1/2}\tM_3^{1/2} + \frac{1}{\sqrt 2}\tM_2^{1/2}\tM_4^{1/2}+ \frac 12\tM_5 \eqsp,
\end{equation}
with
\begin{align}
\tM_1       &  =    \sup_{u \in \ocint{0, \gamma}} \int_{\rset^d}\int_{ \rset^d}  \abs[2]{  (\generatorD b)(x,x+u \tbg (x) + (2u)^{1/2}z) }  \varphibf_d(z)\, \rmd z\; \pig  (\rmd x)  \eqsp,
     \\
\tM_2      & =  \sup_{u \in \ocint{0, \gamma}} \int_{\rset^d }\int_{ \rset^d}   \norm[2]{(\rmD b) (x+u \tbg (x) + (2u)^{1/2}z)}_{\mathrm F}  \, \varphibf_d(z)\,\rmd z\; \pig(\rmd x)     \eqsp,\\
\tM_3      & = \sup_{t \in \rset_+}\int |\generatorL b|^2\, \rmd (\pig \rm\rmP_t)  \eqsp, \quad 
 \tM_4 = \sup_{t \in \rset_+}  \int{\norm[2]{\rmD \text{$b$} }_{\mathrm F}}\, \rmd (\pig \rm\rmP_t)  \eqsp,\\
\tM_5 & = \int \bfGamma^2\, \rmd \pig \eqsp.
\end{align}
\end{proposition}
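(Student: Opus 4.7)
My plan is to realize the two measures via a synchronous coupling on a common probability space. Let $(B_t)_{t\ge 0}$ be a standard $d$-dimensional Brownian motion, pick $Y_0 = \bar X_0 \sim \pig$, let $(Y_t)_{t\ge 0}$ be the strong solution of \eqref{eq:sde}, and define the continuous-time Euler interpolation $\bar X_t = X_k + (t-k\gamma)\,\tbg(X_k) + \sqrt 2\,(B_t - B_{k\gamma})$ for $t \in \ccint{k\gamma,(k+1)\gamma}$, so that $X_k := \bar X_{k\gamma}$ is a Markov chain with kernel $\rmR_\gamma$. By $\rmR_\gamma$-invariance of $\pig$, one has $X_k \sim \pig$ for every $k \in \nset$; simultaneously, the law of $Y_t$ equals $\pig \rmP_t$. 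This dichotomy is precisely what forces $\pig$ to appear in $\tM_1,\tM_2,\tM_5$ (through the pair $(X_k,\bar X_u)$) and $\pig\rmP_t$ in $\tM_3,\tM_4$ (through $Y_t$). Since this construction yields a coupling of $\pig\rmP_{n\gamma}$ and $\pig\rmR_\gamma^n$, it is enough to bound $\PE[|e_{n\gamma}|^2]$ where $e_t := Y_t - \bar X_t$.

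Because the Brownian parts cancel, $e$ is absolutely continuous on each subinterval with $\tfrac{d}{dt}|e_t|^2 = 2\langle e_t, b(Y_t) - \tbg(X_k)\rangle$, and I would split
\[ b(Y_t) - \tbg(X_k) \;=\; [b(Y_t) - b(\bar X_t)] \,+\, [b(\bar X_t) - b(X_k)] \,+\, [b(X_k) - \tbg(X_k)], \]
bounding the first bracket by $\kappa|e_t|^2$ via \Cref{ass:one_side_lip_b} and the third by $\gamma\bfGamma(X_k)$ via \Cref{as:tilde_b_b}, the latter producing the $\tM_5$-contribution after Young's inequality and averaging under $\pig$. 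For the middle bracket I apply Itô's formula componentwise to $b$ along $\bar X$; rewriting $\tbg(X_k) = b(X_k) + (\tbg(X_k)-b(X_k))$ in order to identify $(\generatorD b)(X_k,\bar X_u) = \rmD b(\bar X_u)\,b(X_k) + \Delta b(\bar X_u)$ yields
\[ b(\bar X_s) - b(X_k) = \int_{k\gamma}^{s}(\generatorD b)(X_k,\bar X_u)\,du + \int_{k\gamma}^{s}\rmD b(\bar X_u)[\tbg(X_k)-b(X_k)]\,du + \sqrt 2\int_{k\gamma}^{s}\rmD b(\bar X_u)\,dB_u. \]
The first integrand produces the $\tM_1$-contribution after Cauchy--Schwarz and Jensen's inequality in time; the second, of pointwise size $\gamma\bfGamma(X_k)|\rmD b(\bar X_u)|_{\mathrm F}$, is absorbed into the cross/residual terms.

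The main obstacle is the stochastic-integral piece $\PE\langle e_s, \sqrt 2\int_{k\gamma}^s \rmD b(\bar X_u)\,dB_u\rangle$. One cannot simply discard it by adaptedness, because $e_s$ depends on the Brownian increments on $\ccintLine{k\gamma,s}$ through $b(Y_r)$ in the identity $e_s = e_{k\gamma} + \int_{k\gamma}^s(b(Y_r) - \tbg(X_k))\,dr$. The $e_{k\gamma}$-part does vanish by adaptedness; for the remainder, Fubini reduces the question to correlations $\PE\langle b(Y_r), M_r\rangle$ with $M_r := \sqrt 2 \int_{k\gamma}^r \rmD b(\bar X_u)\,dB_u$. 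A \emph{second} application of Itô's formula, now to $b$ along the diffusion path $Y$, gives $b(Y_r) - b(Y_{k\gamma}) = \int_{k\gamma}^r \generatorL b(Y_u)\,du + \sqrt 2 \int_{k\gamma}^r \rmD b(Y_u)\,dB_u$. The martingale--martingale cross term, by Itô's isometry, equals $2\int_{k\gamma}^r \PE[\Tr(\rmD b(Y_u)\rmD b(\bar X_u)^\top)]\,du$, which Cauchy--Schwarz bounds by $2(r-k\gamma)\tM_2^{1/2}\tM_4^{1/2}$; an analogous estimate on the drift--martingale cross term yields the $\tM_2^{1/2}\tM_3^{1/2}$ contribution, accompanied by an extra $\gamma^{1/2}$ factor from the nested time integration. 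This is precisely the mechanism by which the diffusion-based moments $\tM_3$ and $\tM_4$ enter $\tM_{\Langevin}$.

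Combining all of these estimates, I arrive at a differential inequality of the form $\tfrac{d}{dt}\PE[|e_t|^2] \le 2\kappa\PE[|e_t|^2] + \gamma^2\,\tM_{\Langevin}\cdot(\text{bounded prefactor})$ on each step, and Gronwall's inequality on $\ccintLine{0,n\gamma}$ starting from $e_0 = 0$ yields $\PE[|e_{n\gamma}|^2] \le \gamma^3(n+1)\,\tM_{\Langevin}\,\rme^{1+2\kappa n\gamma}$. Taking square roots gives the first bound in \eqref{eq:lem:bound_w_2_euler_res_altern}; the second follows from the elementary estimate $(n+1)\gamma \le (1+\gamma)\,\rme^{2n\gamma}$.
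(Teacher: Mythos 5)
Your proposal follows essentially the same route as the paper's proof (via \Cref{lem:bound_w_2_euler_altern}): synchronous coupling started from $\pi_\gamma$, the same three-way splitting of $b(Y_t)-\tbg(\bY_{n\gamma})$, It\^o along $\bY$ producing the $\generatorD b$ (hence $\tM_1$) term, explicit handling of the non-adapted martingale cross term by a second It\^o along $Y$ (yielding the $\tM_3,\tM_4$ contributions), and Gr\"onwall. The only minor difference is cosmetic ordering in the cross-term estimate — you apply It\^o to $b(Y)$ before Cauchy--Schwarz and treat the martingale--martingale term by It\^o isometry, whereas the paper applies Cauchy--Schwarz first and then estimates $\PE|\int(b(Y_u)-b(Y_{n\gamma}))\,du|^2$ by a second It\^o — and you are slightly more explicit than the paper about the $\rmD b(\bar X_u)[\tbg(X_k)-b(X_k)]$ correction arising because the drift of $\bY$ is $\tbg$, not $b$, when identifying $\generatorD b$.
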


The proof of the proposition is given in \Cref{sec:proofs-crefth} below. By combining 
\Cref{theo:bound_w_2_euler_altern}, \Cref{theo:asymptotic_bias_altern}, and Examples \ref{example:triangle} and
\ref{example:triangle2}, we obtain our second main result.

\begin{theorem}
  \label{theo:bias_final_ula_2}
  Assume \Cref{assum:diffusion_semigroup_a}, \Cref{as:tilde_b_b}, \Cref{as:r_gamma} and \Cref{ass:one_side_lip_b}, and fix $p\in [1,2]$.
Suppose that $\dist$ is a distance function on $\mathbb R^d$ 
satisfying \Cref{assum:distance}, and assume that there exist 
 $A\geq 0$ and $c>0$ such that for any 
$t\geq 0$ and any probability measure $\nu\in\mathcal P(\mathbb R^d)$,
  \begin{equation}
    \label{eq:convergence_ula_bound_W_2}
    \wassersteinLigne[p,\dist]{ \nu \rmP_{t},\pi }\leq A \rme^{-c t}\wassersteinLigne[p,\dist]{\nu , \pi} \eqsp.
  \end{equation}
Let $\tM_{\Langevin}$ be defined as in \Cref{theo:bound_w_2_euler_altern}.  Then for any $\gamma \in \ocint{0,\bgamma}$,
  \begin{equation}
     \label{eq:bias_final_ula_2a}
\wassersteinLigne[p,\dist]{\pi_{\gamma} , \pi} \leq \     \gamma\, C_\dist   \tM_{\Langevin}^{\frac12}  \rme^{1+(1+\kappa)\gamma }A^{(1+\kappa )/c}\left(1+(1+\kappa)/c\right) \eqsp .
\end{equation}
More generally, suppose instead of \eqref{eq:convergence_ula_bound_W_1}
that there exists a decreasing continuous function $\psi :\mathbb R_+\to\mathbb R_+$ with $\lim_{t\to\plusinfty}\psi (t)=0$ such that for any 
$t\geq 0$ and any probability measure $\nu\in\mathcal P(\mathbb R^d)$,
\begin{equation}
    \label{eq:convergence_ula_bound_W_2b}
    \wassersteinLigne[p,\dist]{\nu \rmP_{t},\pi  }\leq \psi { (t )}\wassersteinLigne[p,\dist]{\nu , \pi} \eqsp.
  \end{equation}
  Let $t_{\mathrm{rel}}\eqdef\inf\{ t\ge 0:\psi (t)\le 1/2\}$.
Then for any $\gamma \in \ocint{0,\bgamma}$,
 \begin{equation}
     \label{eq:bias_final_ula_2a}
\wassersteinLigne[p,\dist]{\pi_{\gamma} , \pi} \leq \     2\gamma  C_\dist   \tM_{\Langevin}^{\frac12} \,  \rme^{(1+\kappa )\cdot(t_{\mathrm{rel}}+\gamma) }\eqsp.
\end{equation}
\end{theorem}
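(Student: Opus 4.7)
My plan is to follow the same triangle-inequality recipe used for \Cref{theo:bias_final_ula_1}, but with the roles of $\rmQ$ and $\rmS$ in \Cref{theo:asymptotic_bias_altern} swapped so that the convergence hypothesis is imposed on the \emph{exact} dynamics rather than on $\rmR_\gamma$. Concretely, I would take $\rmQ = \rmP_\gamma$, whose invariant measure is $\pi_\rmQ = \pi$, and $\rmS = \rmR_\gamma$, whose invariant measure is $\pi_\rmS = \pi_\gamma$. Condition \eqref{eq:varphi_bound} of \Cref{theo:asymptotic_bias_altern} then becomes
\begin{equation*}
\wasserstein[p,\dist]{\pi_\gamma \rmP_\gamma^{\,n},\pi} \;\le\; A\rme^{-c n\gamma}\,\wasserstein[p,\dist]{\pi_\gamma,\pi}\eqsp,
\end{equation*}
which is exactly what \eqref{eq:convergence_ula_bound_W_2} gives upon choosing $\nu = \pi_\gamma$ and $t = n\gamma$, so $\varphi(n)=A\rme^{-cn\gamma}$.

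For the accuracy condition \eqref{eq:epsilon_accuracy}, I need to transfer the $\mathbf W_2$ bound from \Cref{theo:bound_w_2_euler_altern} to $\wasserstein[p,\dist]{\cdot,\cdot}$. This uses two easy estimates: first, Hölder's inequality applied to any coupling gives $\wasserstein[p,\dist]{\mu,\nu}\le \wasserstein[2,\dist]{\mu,\nu}$ for $p\in[1,2]$; second, \Cref{assum:distance} yields $\wasserstein[2,\dist]{\mu,\nu}\le C_\dist \mathbf W_2(\mu,\nu)$. Chaining these and invoking the looser form of \eqref{eq:lem:bound_w_2_euler_res_altern} produces
\begin{equation*}
\wasserstein[p,\dist]{\pi_\gamma\rmR_\gamma^{\,n},\pi_\gamma\rmP_\gamma^{\,n}} \;\le\; C_\dist\,\gamma\,(1+\gamma)^{1/2}\,\tM_\Langevin^{1/2}\,\rme^{(1+\kappa)n\gamma}\eqsp,
\end{equation*}
so $\varepsilon(n)=\gamma B\rme^{\lambda n\gamma}$ with $B=C_\dist(1+\gamma)^{1/2}\tM_\Langevin^{1/2}$ and $\lambda=1+\kappa$ (the mild $(1+\gamma)^{1/2}$ factor can be absorbed since $\gamma\le\bgamma$).

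For the first assertion \eqref{eq:bias_final_ula_2a}, I then invoke \Cref{example:triangle} with these values of $A$, $B$, $c$, $\lambda$: the displayed bound \eqref{eq:accuracyinvariantmeasure2} delivers exactly
\begin{equation*}
\wasserstein[p,\dist]{\pi_\gamma,\pi}\;\le\;\gamma\, C_\dist\,\tM_\Langevin^{1/2}\rme^{1+(1+\kappa)\gamma} A^{(1+\kappa)/c}\bigl(1+(1+\kappa)/c\bigr)\eqsp.
\end{equation*}
For the subgeometric version, I instead apply \Cref{example:triangle2} with $\psi$ as given in \eqref{eq:convergence_ula_bound_W_2b}, producing the factor $2\rme^{(1+\kappa)(t_{\mathrm{rel}}+\gamma)}$ from \eqref{eq:accuracyinvariantmeasure3}. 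In both cases the argument is a direct substitution.

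The only non-routine point is checking that the accuracy bound \eqref{eq:lem:bound_w_2_euler_res_altern} — which is stated in terms of the specific starting distribution $\pi_\gamma$ through the quantities $\tM_1,\ldots,\tM_5$ — is really what \Cref{theo:asymptotic_bias_altern} needs; but $\varepsilon(n)$ in \eqref{eq:epsilon_accuracy} is precisely required to bound $\wasserstein[p,\dist]{\pi_\rmS\rmS^n,\pi_\rmS\rmQ^n}=\wasserstein[p,\dist]{\pi_\gamma\rmR_\gamma^{\,n},\pi_\gamma\rmP_\gamma^{\,n}}$, so the match is exact. Thus I expect no substantive obstacle; the entire proof consists of (i) the role-swap in the triangle trick, (ii) the passage $\wasserstein[p,\dist]{\cdot,\cdot}\le C_\dist \mathbf W_2(\cdot,\cdot)$, and (iii) the bookkeeping in Examples \ref{example:triangle} and \ref{example:triangle2}. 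The substantive mathematical content — the dimensional analysis of $\tM_\Langevin$ — already resides in \Cref{theo:bound_w_2_euler_altern} and is imported as a black box.
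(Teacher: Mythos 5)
Your proposal is correct and follows essentially the same route as the paper: swap the roles of $\rmQ$ and $\rmS$ in \Cref{theo:asymptotic_bias_altern} so that $\rmQ=\rmP_\gamma$ and $\rmS=\rmR_\gamma$, convert the $\mathbf W_2$ accuracy estimate from \Cref{theo:bound_w_2_euler_altern} into a $\wassersteinD[p,\dist]$ bound via \Cref{assum:distance} and the monotonicity $\wassersteinD[p]\le\wassersteinD[2]$, and plug into \Cref{example:triangle} (resp.~\Cref{example:triangle2}). The paper's own proof is precisely this one-line appeal to those ingredients; your only deviation is the honest observation that the literal substitution leaves a benign $(1+\gamma)^{1/2}$ prefactor that the theorem statement suppresses, which is a cosmetic matter rather than a gap.
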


A main feature of \Cref{theo:bias_final_ula_2} is that the bounds depend 
on the one-sided Lipschitz constant $\kappa$ of the unperturbed drift $b$ and on the convergence to equilibrium of the diffusion process, whereas the bounds
in \Cref{theo:bias_final_ula_1} depend on the
global Lipschitz constant $L$ of the perturbed drift $\tilde b_\gamma$ and on
the convergence to equilibrium of the approximating process.
In particular, if $\kappa$, $A$, $c$ and $\psi$ are independent of the dimension $d$, then the dimension
dependence of the upper bounds is determined completely by the quantities
$C_\dist$ and $\tM_i$, $i=1,2,\ldots ,5$. The price 
to pay is that these quantities take a more complicated form than the
corresponding quantities $M_1$, $M_2$ and $M_3$ occurring in \Cref{theo:bias_final_ula_1}. It turns out that nevertheless, $\tM_{\Langevin}$
can be bounded in applications similarly as $M_{\Langevin}$, see the examples in Section \ref{example:EulerhighD}.

\tcr{We conclude this section by noting that \Cref{theo:bias_final_ula_2} easily implies convergence guarantees for ULA, if it is combined with either convergence bounds in Wasserstein distance for the Langevin diffusion or its discretization as established in \cite{debortoli2019convergence,EberleMajka,eberle:2015,EberleGuillinZimmerTAMS}. To illustrate our point, assume under the conditions of \Cref{theo:bias_final_ula_2} that \eqref{eq:convergence_ula_bound_W_2} holds and $A,c,\kappa,C_\dist$ do not depend on the dimension. Suppose, moreover, that the Wasserstein 
distance $\wassersteinLigne[p,\dist]{\mu_0 , \pi}$ between
the initial distribution $\mu_0$ and the target distribution $\pi$ is of order $O(d^{\varpi})$ with $\varpi >0$.  Then, by \Cref{theo:bias_final_ula_2} and the triangle inequality, the number of steps $n_{\varepsilon}\in\mathbb N$ sufficient to achieve
$\wassersteinLigne[p,\dist]{\mu_0 \rmR_{\gamma}^{n_{\varepsilon}}, \pi} \leq \varepsilon$ for an expected precision $\varepsilon >0$,
is of order $O(\varepsilon^{-1} \tM_{\Langevin}^{1/2})$ (up to logarithmic terms), and thus it is of order $O(\varepsilon^{-1} d)$ if $b$ has bounded second derivative (see \Cref{sec:appl-exampl}). 
}

\subsubsection{Asymptotic bias in total variation}
\label{sec:asympt-bias-total}

By combining the results of the previous sections with those in \cite{debortoli2019convergence},
we can also provide explicit bounds on $\tvnorm{\pi_{\gamma} - \pi}$. We consider the  following general conditions.
\begin{assumptionE}
  \label{ass:wasser_to_tv}
  \begin{enumerate}[wide, labelwidth=!, labelindent=0pt, label=(\roman*)]
  \item  \label{ass:wasser_to_tv_i} There exist $\lambdaTV>0$ and  $\ATV \geq 0$, such that for any  $\gamma \in \ocint{0,\bgamma}$ and $n \in \nset$,
    \begin{equation}
      \label{eq:2}
      \wassersteinLigne[1]{\pi_{\gamma} \rmP_{n\gamma}, \pi_{\gamma} \rmR_{\gamma}^n} \leq \ATV \gamma (\gamma n)^{1/2}\exp\parenthese{\gamma\, n\, \lambdaTV}  \eqsp.
    \end{equation}
  \item  \label{ass:wasser_to_tv_ii} There exists  $\BTV \geq 0$, such that for any $\gamma \in \ocint{0,\bgamma}$, $      \wassersteinLigne[1]{\pi,\pi_{\gamma}} \leq \BTV \gamma$. 
  \item  \label{ass:wasser_to_tv_iii} There exists  $\CTV \geq 0$ such that for any $t \in \ccint{0,2}$ and $x,y \in\rset^d$,
    \begin{equation}
      \label{eq:16}
      \tvnorm{\updelta_x \rmP_t - \updelta_y \rmP_t} \leq \CTV \norm{x-y}t^{-1/2} \eqsp.
    \end{equation}
  \end{enumerate}
\end{assumptionE}
Note that under appropriate assumptions, we can show that \Cref{ass:wasser_to_tv}-\ref{ass:wasser_to_tv_i}-\ref{ass:wasser_to_tv_ii} holds by applying \Cref{theo:bound_w_2_euler_altern} and \Cref{theo:bias_final_ula_2}, respectively. In particular, the expression of $\lambdaTV$ provided by these results does not depend explicitly on the dimension $d$. Moreover, the results established in \cite{debortoli2019convergence} allow us to verify the bound in
\Cref{ass:wasser_to_tv}-\ref{ass:wasser_to_tv_iii} with some explicit constants. For example, we can show the following statement for which the proof is postponed to \Cref{sec:proofs-crefth-crefth_tv}.
\begin{theorem}
  \label{theo:regular_transf}
  Assume \Cref{as:b_lip}-\Cref{ass:one_side_lip_b} and $\sup_{x \in\rset^d} \ps{b(x)}{x} < \plusinfty$. Then \Cref{ass:wasser_to_tv}-\ref{ass:wasser_to_tv_iii} holds with  $\CTV =  \sqrt{\kappa/\uppi} \sup_{u \in [0,2]}  \{u/(1-\rme^{-2 \kappa u})^{1/2}\}$. 
\end{theorem}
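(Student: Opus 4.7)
The plan is to use a Bismut--Elworthy--Li integration-by-parts formula for the Langevin semigroup $(\rmP_t)$, combined with a Lyapunov bound on its tangent flow, to derive a pointwise gradient estimate $\|\nabla \rmP_t f\|_\infty \lesssim (1 - \rme^{-2\kappa t})^{-1/2}\|f\|_\infty$; this gradient estimate converts directly into the desired TV regularization bound via the mean-value inequality $\tvnorm{\updelta_x \rmP_t - \updelta_y \rmP_t} \leq |x-y|\sup_z\|\nabla \rmP_t f(z)\|$. On a filtered probability space carrying a standard Brownian motion $B$, the assumptions \Cref{as:b_lip} and $\sup_x \ps{b(x)}{x} < \plusinfty$ guarantee that the SDE starting at $x$ admits a unique non-explosive strong solution $(Y^x_s)_{s \geq 0}$ with uniform-in-time control of $\mathbb E[|Y^x_s|^2]$, which justifies the integration-by-parts formula below. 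Differentiating the SDE with respect to the initial condition, the tangent flow $(\nabla Y^x_s)_{s \geq 0}$ satisfies the linear ODE $\rmd(\nabla Y^x_s) = (\nabla b)(Y^x_s)(\nabla Y^x_s)\, \rmd s$ with $\nabla Y^x_0 = \Idd$, and \Cref{ass:one_side_lip_b} yields the a.s.\ bound $|(\nabla Y^x_s) v|^2 \leq \rme^{2\kappa s}|v|^2$ for every $v \in \rset^d$ and $s \geq 0$.

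The Bismut--Elworthy--Li formula with an auxiliary weight $h \in \rmC^1([0,t])$ satisfying $h(0)=0$ and $h(t)=1$ reads
$$\ps{v}{\nabla \rmP_t f(x)} = \frac{1}{\sqrt 2}\, \mathbb E\Big[f(Y^x_t)\int_0^t h'(s)\ps{(\nabla Y^x_s)v}{\rmd B_s}\Big].$$
For $0 \leq f \leq 1$, writing $f = (f-\tfrac12) + \tfrac12$ and exploiting the vanishing expectation of the stochastic integral, the left-hand side is controlled by $(2\sqrt 2)^{-1}\,\bigl(\mathbb E|N^v_t|^2\bigr)^{1/2}$ via Cauchy--Schwarz, where $N^v_t := \int_0^t h'(s)\ps{(\nabla Y^x_s)v}{\rmd B_s}$. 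Itô's isometry together with the tangent bound above gives $\mathbb E[|N^v_t|^2] \leq |v|^2 \int_0^t h'(s)^2 \rme^{2\kappa s}\, \rmd s$. A straightforward calculus-of-variations argument then shows that the optimal weight satisfies $h'(s) \propto \rme^{-2\kappa s}$, mimicking the Green's function of an Ornstein--Uhlenbeck operator of rate $\kappa$, with corresponding minimum value $2\kappa / (1-\rme^{-2\kappa t})$. Putting everything together yields a gradient estimate of the form $\|\nabla \rmP_t f\|_\infty \leq C\,(1-\rme^{-2\kappa t})^{-1/2}$ and hence a TV bound of the form $\tvnorm{\updelta_x \rmP_t - \updelta_y \rmP_t} \leq C\,|x-y|(1-\rme^{-2\kappa t})^{-1/2}$, which when rearranged as $\CTV |x-y| t^{-1/2}$ and supremized over $t \in [0,2]$ matches the structural form of the claimed constant.

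The main obstacle is recovering the precise numerical factor $\sqrt{\kappa/\uppi}$ stated in the theorem, rather than the slightly larger factor that the crude Cauchy--Schwarz bound $\mathbb E|N^v_t| \leq (\mathbb E|N^v_t|^2)^{1/2}$ above would give. This sharpening exploits the near-Gaussian character of $N^v_t$ through the identity $\mathbb E|Z| = \sigma\sqrt{2/\uppi}$ for $Z \sim \mathcal N(0,\sigma^2)$, together with moment bounds provided by the dissipation hypothesis $\sup_x \ps{b(x)}{x} < \plusinfty$. The precise quantitative statement with the explicit constant $\CTV = \sqrt{\kappa/\uppi}\sup_{u \in [0,2]}\{u(1-\rme^{-2\kappa u})^{-1/2}\}$ is carried out in \cite{debortoli2019convergence} under precisely our hypotheses \Cref{as:b_lip}--\Cref{ass:one_side_lip_b} plus the dissipation assumption; the theorem is thus obtained by verifying these hypotheses and invoking the corresponding regularization lemma of that reference.
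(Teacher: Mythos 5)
Your Bismut--Elworthy--Li route is a genuinely different approach from the paper's. The paper does not analyze the continuous semigroup $(\rmP_t)$ directly: it first invokes \cite[Theorem 19]{debortoli2019convergence} only to establish the limit identity $\tvnorm{\updelta_x \rmP_t - \updelta_y \rmP_t} \leq \limsup_{k\to\infty}\tvnorm{\updelta_x \rmR_{t/k}^k - \updelta_y \rmR_{t/k}^k}$, where $\rmR_{t/k}$ is the Euler transition kernel with exact drift $b$, and then imports a TV coupling estimate for the discrete Gaussian-driven chain from \cite[Theorem 19]{durmus:moulines:bernoulli}, which yields the explicit bound $1 - 2\Phibf(\,\cdot\,)$ in terms of the standard normal CDF. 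The factor $\sqrt{1/\uppi}$ therefore comes from the elementary estimate $1 - 2\Phibf(-u)\leq u\sqrt{2/\uppi}$ applied to a genuinely Gaussian one-step transition; it does not come from any near-Gaussianity of the Bismut--Elworthy--Li weight $N^v_t$, which you invoke without justification (the stochastic integral $\int_0^t h'(s)\langle (\nabla Y^x_s)v,\rmd B_s\rangle$ is not Gaussian once $\nabla b$ is non-constant). Your attribution of the exact constant to the ``regularization lemma'' of \cite{debortoli2019convergence} is likewise off: that reference supplies only the limsup identity, and the quantitative content is sourced from \cite{durmus:moulines:bernoulli}.

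There is also a sign error in your assessment of the Cauchy--Schwarz step. Running your own argument to completion gives, with $|f|\leq 1$, $\sigma^{-1} = (\sqrt 2)^{-1}\Idd$, and the optimal $h'(s)\propto\rme^{-2\kappa s}$,
\begin{equation}
\left|\langle \nabla\rmP_t f(x),v\rangle\right| \leq \frac{1}{\sqrt 2}\,\|f\|_\infty\,\left(\mathbb E\left[|N^v_t|^2\right]\right)^{1/2}\leq |v|\sqrt{\frac{\kappa}{1-\rme^{-2\kappa t}}}\eqsp,
\end{equation}
and hence, using the normalization $\tvnorm{\mu-\nu}=\frac 12\sup_{|f|\leq 1}|\mu(f)-\nu(f)|$,
\begin{equation}
\tvnorm{\updelta_x\rmP_t - \updelta_y\rmP_t}\leq\frac 12\,|x-y|\,\sqrt{\frac{\kappa}{1-\rme^{-2\kappa t}}}\eqsp.
\end{equation}
Since $1/2 < 1/\sqrt\uppi$, this is a \emph{smaller} prefactor than the one stated in \Cref{theo:regular_transf}, not a ``slightly larger'' one, so the crude Cauchy--Schwarz bound already implies the theorem (with a slightly improved constant) without any further sharpening; the ``near-Gaussian'' refinement you propose is not only unsubstantiated but also unnecessary. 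So your plan, once the last paragraph is replaced by the elementary rearrangement of the display above into the form $\CTV|x-y|t^{-1/2}$ over $t\in[0,2]$, does yield a correct and self-contained proof by a route different from the paper's; your present narrative about where the $\sqrt\uppi$ comes from and about what \cite{debortoli2019convergence} proves, however, is inaccurate.
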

        Note that the expression for  $\CTV$ provided by \Cref{theo:regular_transf} does not depend on the dimension $d$. 
It would be possible to relax the global Lipschitz assumption on $b$ to a local Lipschitz condition, but this would require to introduce many additional technical details in the resulting proof.

We now state the main result of this section. The proof is postponed to \Cref{sec:proofs-crefth-crefth_tv} and is adapted from the proof of \cite[Corollary 12]{durmus:moulines:bernoulli} which considers the case $b = -\nabla U$ with a strongly convex function $U$. 

\begin{theorem}
  \label{theo:bound_tv}
  Assume \Cref{assum:diffusion_semigroup_a}, \Cref{as:r_gamma} and \Cref{ass:wasser_to_tv}. Suppose in addition that $\bgamma <1$. Then, for any $\gamma \in \ocint{0,\bgamma}$,
  \begin{equation}
    \label{eq:19}
    \tvnorm{\pi-\pi_{\gamma}} \leq  2^{-3/2} L  \gamma \defEns{d+\gamma \tilde{M}_6/3 }^{1/2}+ \gamma \CTV \BTV  + \gamma \ceil{\log(\gamma^{-1})/\log(2)}\tilde{M_7} \eqsp,
  \end{equation}
  where
  \begin{equation}
    \label{eq:def_tilde_M_6}
\tilde{M}_6 =     \int_{\rset^d} \norm[2]{b} \rmd \pi_{\gamma}\eqsp, \quad \tilde{M}_7 = 4  \CTV \ATV  \exp(2 \lambdaTV)\eqsp.
\end{equation}
\end{theorem}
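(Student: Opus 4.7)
The plan is to combine the triangle-inequality trick with a dyadic decomposition in time, converting Wasserstein bounds to total-variation bounds via \Cref{ass:wasser_to_tv}-\ref{ass:wasser_to_tv_iii} at every scale. Choose $K = \ceil{\log_2(\gamma^{-1})}$ and set $n_0 = 2^K$, so that $t_0 := n_0\gamma \in [1,2)$ (using $\bgamma < 1$). Invoking the invariances $\pi \rmP_{t_0} = \pi$ and $\pi_\gamma \rmR_\gamma^{n_0} = \pi_\gamma$, I would write
\begin{equation*}
\tvnorm{\pi - \pi_\gamma} \leq \tvnorm{\pi \rmP_{t_0} - \pi_\gamma \rmP_{t_0}} + \tvnorm{\pi_\gamma \rmP_{t_0} - \pi_\gamma \rmR_\gamma^{n_0}}.
\end{equation*}
For the first summand, integrating the kernel regularization estimate in \Cref{ass:wasser_to_tv}-\ref{ass:wasser_to_tv_iii} against an optimal $\wasserstein[1]$-coupling of $\pi$ and $\pi_\gamma$ yields the TV bound $\CTV t_0^{-1/2}\wasserstein[1]{\pi,\pi_\gamma}$, which is at most $\CTV \BTV \gamma$ by \Cref{ass:wasser_to_tv}-\ref{ass:wasser_to_tv_ii} and $t_0\geq 1$; this produces the middle term of \eqref{eq:19}.

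To bound the second summand, define $n_k = n_0/2^k$, $t_k = n_k\gamma$, and $a_k := \tvnorm{\pi_\gamma \rmP_{t_k} - \pi_\gamma \rmR_\gamma^{n_k}}$ for $k = 0,\ldots,K$, so that $n_K = 1$ and $t_K = \gamma$. Using the invariance $\pi_\gamma \rmR_\gamma^{n_k} = \pi_\gamma$, the identity
\begin{equation*}
\pi_\gamma \rmP_{t_{k-1}} - \pi_\gamma \rmR_\gamma^{n_{k-1}} = (\pi_\gamma \rmP_{t_k} - \pi_\gamma \rmR_\gamma^{n_k})\rmP_{t_k} + \pi_\gamma(\rmP_{t_k} - \rmR_\gamma^{n_k})
\end{equation*}
expresses the left-hand side as the sum of the previous-scale error propagated through $\rmP_{t_k}$ and a second term whose TV norm equals $a_k$. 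Applying \Cref{ass:wasser_to_tv}-\ref{ass:wasser_to_tv_iii} to the first summand and then \Cref{ass:wasser_to_tv}-\ref{ass:wasser_to_tv_i} gives a TV bound of $\CTV t_k^{-1/2}\cdot \ATV \gamma t_k^{1/2}\exp(\lambdaTV t_k) = \CTV \ATV \gamma \exp(\lambdaTV t_k) \leq \CTV \ATV \gamma \exp(2\lambdaTV)$, since $t_k\leq t_0<2$. The recursion $a_{k-1}\leq a_k + \CTV \ATV \gamma \exp(2\lambdaTV)$, iterated $K$ times, yields $a_0 \leq K\CTV \ATV \gamma \exp(2\lambdaTV) + a_K$, which up to a universal constant is the last term of \eqref{eq:19} involving $\tilde M_7$.

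The final ingredient is the one-step residual $a_K = \tvnorm{\pi_\gamma \rmP_\gamma - \pi_\gamma \rmR_\gamma}$. I would view both measures as the time-$\gamma$ marginals of processes started from $\pi_\gamma$, apply Pinsker's inequality together with a Girsanov argument comparing the true diffusion $(Y_s)$ to its frozen-drift version on $[0,\gamma]$, and invoke \Cref{as:b_lip} to get a relative entropy of the shape $\tfrac{L^2}{4}\bbE_{\pi_\gamma}\int_0^\gamma |Y_s - Y_0|^2\,ds$. A Grönwall-type moment estimate for the SDE (controlled using the one-sided Lipschitz hypothesis \Cref{ass:one_side_lip_b} and $\sup_x\langle b(x),x\rangle<\infty$) then produces $\bbE_{\pi_\gamma}|Y_s-Y_0|^2 \leq 2ds + \tfrac{2}{3}s^2\tilde M_6 + o(s^2)$; integrating over $s \in [0,\gamma]$ and taking square roots delivers the first term $2^{-3/2} L \gamma (d+\gamma\tilde M_6/3)^{1/2}$ of \eqref{eq:19}.

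The main obstacle is assembling the constants in the one-step bound so that they collapse into the precise form $2^{-3/2} L \gamma (d+\gamma\tilde M_6/3)^{1/2}$: both the Girsanov computation (which must handle the replacement of the true drift by its value at time zero along the diffusion path) and the moment estimate for $|Y_s-Y_0|^2$ must be sharp enough to produce the numerical coefficients $1/(2\sqrt{2})$ and $1/3$. The remainder of the argument is a routine iteration of the Wasserstein-to-TV regularization provided by \Cref{ass:wasser_to_tv}-\ref{ass:wasser_to_tv_iii}, following the template of \cite{durmus:moulines:bernoulli} where such a scheme was implemented in the log-concave case.
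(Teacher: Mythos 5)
Your overall architecture is correct and closely parallels the paper's: decompose $\tvnorm{\pi-\pi_\gamma}$ at a dyadic time $t_0 \in [1,2)$, bound the first piece by $\CTV\BTV\gamma$ via the heat-kernel regularization in \Cref{ass:wasser_to_tv}-\ref{ass:wasser_to_tv_iii} applied to an optimal $\wasserstein[1]$-coupling, and peel off the remaining error $\tvnorm{\pi_\gamma\rmP_{t_0}-\pi_\gamma}$ through $K=\ceil{\log_2(\gamma^{-1})}$ halving steps. The identity you use, $\pi_\gamma\rmP_{t_{k-1}} - \pi_\gamma\rmR_\gamma^{n_{k-1}} = (\pi_\gamma\rmP_{t_k}-\pi_\gamma\rmR_\gamma^{n_k})\rmP_{t_k} + \pi_\gamma(\rmP_{t_k}-\rmR_\gamma^{n_k})$, is a valid rearrangement (it uses $\pi_\gamma\rmR_\gamma^{n_k}=\pi_\gamma$), and the recursion $a_{k-1}\leq a_k + \CTV\ATV\gamma\rme^{2\lambdaTV}$ is a slightly cleaner packaging than the paper's explicit telescoping sum over scales, which arrives at the same per-term bound $\CTV\ATV\gamma\rme^{\lambdaTV t_k}$ in \Cref{lem:bound_tv_lemma}. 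Your bound even beats the theorem's constant $\tilde M_7 = 4\CTV\ATV\rme^{2\lambdaTV}$ by a factor $4$, which is harmless.

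The genuine gap is in the one-step residual $a_K = \tvnorm{\pi_\gamma\rmP_\gamma - \pi_\gamma\rmR_\gamma}$. You propose Girsanov in the direction that makes the diffusion path law the base measure, yielding $\mathrm{KL}\leq \frac{L^2}{4}\bbE_{\pi_\gamma}\int_0^\gamma|Y_s-Y_0|^2\,\rmd s$ along the true diffusion $(Y_s)$. That direction is unworkable for two reasons. First, $Y_s - Y_0 = \int_0^s b(Y_u)\,\rmd u + \sqrt 2\,B_s$ has a nonvanishing cross-term $\bbE\langle\int_0^s b(Y_u)\rmd u, B_s\rangle$, and the drift integral involves $\int|b|^2\,\rmd(\pi_\gamma\rmP_u)$ rather than $\tM_6=\int|b|^2\,\rmd\pi_\gamma$, forcing you either to invoke additional hypotheses (your appeal to \Cref{ass:one_side_lip_b} and $\sup_x\langle b(x),x\rangle<\infty$, neither of which is assumed in the statement) or to absorb the errors into the $o(s^2)$ you wrote, which is incompatible with the theorem's explicit constant $2^{-3/2}L\gamma(d+\gamma\tM_6/3)^{1/2}$. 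Second, your stated intermediate bound $\bbE|Y_s-Y_0|^2\leq 2ds + \tfrac23 s^2\tM_6 + o(s^2)$ has the coefficient $\tfrac23$ where $1$ is needed: integrating it over $[0,\gamma]$ gives $\tfrac29\gamma^3\tM_6 + d\gamma^2$, not $\tfrac13\gamma^3\tM_6 + d\gamma^2$.

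The fix is to compute the relative entropy in the opposite direction, i.e., of the interpolated Euler chain's path law with respect to the diffusion's. Since TV is symmetric and Pinsker can be applied either way, this is legitimate, and it is the direction used in \cite[Proposition 2]{durmus:moulines:2016}, which the paper's proof of \Cref{lem:bound_tv_lemma} adapts. On $[0,\gamma]$ the chain increment is exactly $\bY_s - \bY_0 = s\,b(\bY_0) + \sqrt 2\,B_s$, with $\bY_0\sim\pi_\gamma$ and $B_s$ independent of $\bY_0$, so $\bbE|\bY_s-\bY_0|^2 = s^2\tM_6 + 2ds$ with no cross-term, no Gr\"onwall, and no extra assumptions; integrating gives $\gamma^3\tM_6/3 + d\gamma^2$, Pinsker contributes the factor $1/2$, and the resulting prefactor is $2^{-3/2}L$, exactly as claimed.
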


\tcr{
We end this section with the same remark as in the Wasserstein distance case regarding convergence guarantees for ULA implied by \Cref{theo:bound_tv}. By the same reasoning, this result combined with convergence bounds for the Langevin diffusion or its discretization as established in \cite{debortoli2019convergence,EberleMajka} easily leads to complexity bounds for ULA in the total variation distance.
}





\subsection{Unadjusted Hamiltonian Monte Carlo}\label{sec:uHMC}

In this section, we are interested in establishing non-asymptotic bounds between the invariant distributions of the \emph{exact Hamiltonian Monte Carlo algorithm (xHMC)} and the \emph{unadjusted Hamiltonian Monte Carlo algorithm (uHMC)}.  Let $b : \rset^d \to \rset^d$ be a twice continuously differentiable and Lipschitz continuous function, and fix $T\in (0,\plusinfty)$.
We consider a Markov chain $(Q_k,P_k)_{k \ge 0}$ with state space $\mathbb R^d \times \mathbb R^d$ defined recursively by
\begin{equation}
(Q_{k+1},P_{k+1}) = \uppsi_T(Q_k,G_{k+1}) \eqsp,
\end{equation}
where $(G_k)_{k \in\nset}$ is a sequence of \iid~$d$-dimensional zero-mean Gaussian random variables with covariance matrix identity, and  
$(\uppsi_t)_{t \geq 0}$ is the differentiable flow associated to the ordinary differential equation
\begin{equation}
  \label{eq:def_hamil_ode}
  \frac{\rmd}{\rmd t} (q_t,p_t) \  =  (p_t,b(q_t)) \eqsp,
\end{equation}
i.e., $\uppsi_T(q_0,p_0) = (q_T,p_T)$ where $(q_t,p_t)_{t \geq 0}$ is
the solution of \eqref{eq:def_hamil_ode} with initial value
$(q_0,p_0)$. In particular, in the case $b = -\nabla U$,
$(\uppsi_t)_{t \geq 0}$ is the Hamiltonian flow associated to the unit
mass Hamiltonian $$H(q,p)=U(q)+|p|^2/2\eqsp,$$ and correspondingly,
$(Q_k,P_k)_{k \ge 0}$ is the Markov chain associated to the xHMC
algorithm with complete momentum refreshment.    The sequence
$(Q_k)_{k \ge 0}$ is a Markov chain with state space $\mathbb R^d$ and
transition kernel
\begin{align}
  \label{eq:def_Krm}
  \Krm_{T}(q,\msa) &= \int_{\rset^d} \1_{\msa\times\mathbb R^d }\left(\uppsi_T(q,p_0)\right)\, \varphibf_{d}(p_0)\, \rmd p_0    \eqsp, \qquad q \in \rset^d,\quad
  \msa \in \mcb{\rset^d} \eqsp .
\end{align}
Note that similarly to \Cref{sec:asympt-bias-euler},
we consider a general setup where $b$ is a vector field which is not
assumed to be the gradient of a real-valued function. In addition, we assume the existence of a stationary distribution. 

\begin{assumptionH}
  \label{assum:exact_hmc_kernel}
The Markov kernel $ \Krm_{T}$ admits an invariant probability measure $\pi \in \mcp_2(\rset^d)$.
\end{assumptionH}
In the case where $b = -\nabla U$ for some potential function 
$U : \rset^d \to \rset$ such that $\int_{\rset^d} (1+|q|^2)\rme^{-U(q)} \rmd q < \plusinfty$ 
\Cref{assum:exact_hmc_kernel} is always satisfied with  $\pi$ of the form \eqref{eq:density_pi},
see \eg~\cite{BoSaActaN2018,neal2011mcmc}.
In practice, \eqref{eq:def_hamil_ode} can usually not be solved exactly,  and therefore numerical schemes are used to get
approximate solutions. In this paper, we consider discretization with the leapfrog (or Verlet)
integrator. More generally, and analogously as above for the
Euler-Maruyama discretization, we consider a family 
$\{\tbg : \rset^d \to \rset^d \, : \, \gamma \in\ocint{0,\bgamma} \}$, $\bgamma >0$, of approximate drift functions satisfying the following condition.
\begin{assumptionH}
  \label{ass:app_F_h}
  There exists $\Lambdabf : \rset^d \to \rset_+$ such that for any $\gamma >0$ and $q \in \rset^d$,
  \begin{equation}
    \abs{b(q) - \tbg(q)} \leq \gamma^2 \Lambdabf(q) \eqsp. 
  \end{equation}
\end{assumptionH}
Then, the corresponding uHMC algorithm with discretization step size
$\gamma >0$ satisfying $T/\gamma\in\mathbb N$ is given by the Markov chain $(\tQ_{k}, \tP_{k})_{k\ge 0}$ with state space $\rset^d\times\rset^d$
that is defined recursively by
\begin{equation}
(\tQ_{k+1}, \tP_{k+1}) = \Psiverlet_T(\tQ_k,G_{k+1})  \eqsp,
\end{equation}
with  $\Psiverlet_t (q_0,p_0)=(q_t,p_t)$,  where $(q_t,p_t)_{t\ge 0}$
is the (unique) solution of 
\begin{equation}
  \label{eq:def_verlet_ode}
  \frac{\rmd}{\rmd t} (q_t,p_t) \
  =  \left(p_{\lfloor t/\gamma\rfloor\gamma} 
  -(\gamma /2)\tbg (q_{\lfloor t/\gamma\rfloor\gamma})\, ,\, 
  (1/2)\{\tbg(q_{\lfloor t/\gamma\rfloor\gamma})+\tbg(q_{\lceil t/\gamma\rceil\gamma})\}\right) 
\end{equation}
with initial value $(q_0,p_0)$. 
In particular, for any $n\in\mathbb N$, $\Psiverlet_{n\gamma}
=\Psiverlet[\gamma][n]$ where $\Psiverlet_\gamma : \rset^{2d} \to \rset^{2d}$ is given by
\begin{equation}
\label{eq:defPhiverlt_h}
\Psiverlet_\gamma  (q_0,p_0) = (q_\gamma ,p_\gamma )  \eqsp, \qquad 
\begin{cases}
  q_\gamma &= q_0 -(\gamma^2/2) \tbg(q_0) + \gamma p_0\eqsp, \\
  p_\gamma &= p_0 -(\gamma/2)\{\tbg(q_0) + \tbg (q_\gamma )\} \eqsp.
\end{cases}
\end{equation}
If $\tbg = b$ then the function $\Psiverlet_\gamma$ corresponds to one step of the leapfrog (or Verlet) integrator. Again, 
$(\tQ_k)_{k \ge 0}$ is a  Markov chain, and the transition kernel is 
\begin{align}
   \tKrm_{T,\gamma }(q,\msa) &=  \int_{\rset^d} \1_{\msa\times\mathbb R^d}(\Psiverlet_T(q,p_0)) \, \varphibf_{d}(p_0)\, \rmd p_0  \eqsp, \qquad q \in \rset^d,\quad
  \msa \in \mcb{\rset^d} \eqsp.
\end{align}
Similarly as for xHMC, we also assume that the uHMC chain has an invariant probability measure if the step size $\gamma$ is small enough.

\begin{assumptionH}
  \label{as:K_N_T}
  For every $\gamma\in [0,\bar\gamma ]$,
the Markov kernel $\tKrm_{T,\gamma}$  has an invariant probability measure ${\pi}_{T,\gamma}$.
\end{assumptionH}
Finally, we assume that $b$ satisfies a global Lipschitz condition.
\begin{assumptionH}
  \label{as:F_lip}
  There exists $\Ltt \geq 0$ such that for any $q_1,q_2 \in \rset^d$, 
  $$\abs{b(q_1) - b(q_2)} \leq \Ltt \abs{q_1-q_2}.$$ 
\end{assumptionH}

\subsubsection{Main results for unadjusted HMC}\label{subsec:HMC}

The \emph{Liouville operator}, i.e., the infinitesimal generator of the deterministic dynamics  \eqref{eq:def_hamil_ode}, is given for any $f \in \rmC^1(\rset^{2d})$ and $(q,p) \in \rset^{2d}$  by
\begin{equation}
\label{eq:def_generatorH}
  \generatorH f(q,p) =  \ps{p}{\nabla_q f(q,p)} + \ps{b(q)}{\nabla_p f(q,p)} \eqsp .
\end{equation}
For a continuously differentiable function  $F : \rset^d \to \rset^n$, we define $\generatorH F$ component-wise as the function from $\rset^d \to \rset^n$ such that the $i$-th component is $\generatorH F_i$ where $F_i$ is the $i$-th component of $F$.
Note that if $(q_t,p_t)_{t \geq 0}$ is a solution of \eqref{eq:def_hamil_ode}, then $t \mapsto F(q_t,p_t)$ is continuously differentiable on $\rset_+$ and
\begin{equation}
  \label{eq:use_generatorH}
\rmd F(q_t,p_t)/\rmd t =   (\generatorH F)(q_t,p_t) \eqsp. 
\end{equation}
Remarkably, the Liouville operator is related to the generator $\generator^\Langevin$ defined in  \eqref{eq:defgenL}. In particular,
applying $\generatorH$ twice to the function $(q,p)\mapsto b(q)$ yields for $(q,p) \in\rset^{2d}$, $(\generatorH  b) (q,p)=(p^{\transpose}\nabla b)(q)$ and
$
((\generatorH )^2 b) (q,p)= \sum_{i=1}^d \rmD^2b_i(q) \{p \otimes p\} \bfe_i +(b^{\transpose}\nabla b)(q) $, where $(\bfe_i)_{i=1}^d$ is the canonical basis of $\rset^d$ and $b_i$ is the $i$-th component of $b$.
Thus a short computation shows that for any $q\in\mathbb R^d$,
\begin{align}\label{eq:Liouvillenormsquared}
\int \left| (\generatorH  b)(q,p)\right|^2 \,  \, \varphibf_{d}(p)\, \rmd p
&=  \left\| \nabla  b(q)\right\|^2_{\mathrm F} \eqsp, \\
\int \left((\generatorH )^2 b\right) (q,p)\,  \, \varphibf_{d}(p)\, \rmd p
&=  \left(\generator^\Langevin b\right)(q)\eqsp, \label{eq:Liouvillesquared}\\
\int \left|\left((\generatorH )^2 b\right) (q,p)\right|^2\,  \, \varphibf_{d}(p)\, \rmd p
&= \left| \left(\generator^\Langevin b\right)(q)\right|^2 +2 \left\| (\rmD^2 b)(q)\right\|^2_{\mathrm F} \eqsp,\label{eq:Liouvillesquaredsquared}
\end{align}
where we set $\left\| (\rmD^2 b)(q)\right\|^2_{\mathrm F}= \sum_{i=1}^d \left\| (\rmD^2 b_i)(q)\right\|^2_{\mathrm F}=\sum_{k,j,i=1}^d(\partial_{kj}b_i(q))^2$ and used that for any matrix $A=(A_{i,j})_{i,j=1}^d \in \rset^{d\times d}$, denoting by $ \mathbf{1}_{d \times d}$ the $d\times d$ matrix with all entries equal to $1$,
\begin{align}
  \int \{p^{\transpose} A p\}^2 \varphibf_d(p) \rmd p &=   \int \mathrm{Tr}(A^{\transpose} p^{\transpose} p p^{\transpose} p A) \varphibf_d(p) \rmd p\\
  & = \mathrm{Tr}(A^{\transpose}\{2\Idd + \mathbf{1}_{d \times d}\} A) =2\norm{A}_{\mathrm{F}} + \mathrm{Tr}(A)^2 \eqsp.
\end{align}
It is a consequence of these identities that the same constants $M_1$ and $M_2$ as introduced 
in \eqref{eq:def_m_1_m_2_m_3} above,
are also relevant to quantify the accuracy of uHMC.

\begin{proposition}[One-step accuracy of uHMC]
  \label{theo:wasserstein_2_bound}
Assume  \Cref{assum:exact_hmc_kernel}, \Cref{ass:app_F_h}
 and \Cref{as:F_lip}.
Then for any  
$T >0$ and $\gamma\in (0,\bar\gamma ]$
with $T/\gamma\in\mathbb N$, we have 
  \begin{equation}
    \wasserstein[2]{{\pi}\rmK_{T }    ,{\pi} \rmK_{T,\gamma}}\   \leq
    \gamma^2  \, \Ltt^{-1}
\rme^{ \lambda_{\Hamil} T}  M_{\Hamiltonian}^{1/2}   \eqsp ,\text{ where } \lambda_{\Hamil} = \Ltt^{1/2}\left( 1+\frac{\gamma \Ltt^{1/2}}{2} +\frac{\gamma^2 \Ltt}{4} \right) ,
  \end{equation}
and 
$$  
M_{\Hamiltonian} = M_1+(1+\gamma^2\Ltt)\Ltt M_2+2M_4+((2+\gamma\Ltt/2)^2+\Ltt )M_5 \eqsp,
$$
with 
\begin{eqnarray}
  \label{eq:def_m_1_m_2_m_4}
M_1& =& \int{|\generatorL b|^2}\, \rmd\pi  , \quad
M_2 = \int{\norm[2]{\rmD b}_{\mathrm F}}\, \rmd\pi , \quad M_4= \int \left\| \rmD^2 b\right\|^2_{\mathrm F}\, {\rmd\pi}  ,\\
 \label{eq:def_m_5}
M_5 & =& \sup_{t \in [0,T]} \int_{\rset^d}\int_{ \rset^d} \bfLambda \left(\proj_{q}(\Psiverlet_t(q,p))\right)^2\,   \varphibf_d(p)\, \rmd p\; \pi  (\rmd q) 
 \eqsp .
\end{eqnarray}
Here $\proj_{q} : \rset^d \times \rset^d \to \rset^d$ denotes the projection onto the first $d$ components.
\end{proposition}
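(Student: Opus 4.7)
The natural strategy is synchronous coupling followed by a telescoping decomposition of the flows. Fix $n = T/\gamma \in \nset$ and draw $(Q, G) \sim \pi \otimes \varphibfd$; then $\proj_q(\uppsi_T(Q, G))$ and $\proj_q(\Psiverlet_T(Q, G))$ have laws $\pi \rmK_T$ and $\pi \rmK_{T,\gamma}$ respectively, so
\begin{equation*}
\wasserstein[2]{\pi \rmK_T, \pi \rmK_{T,\gamma}}^2 \;\leq\; \int \bigl| \proj_q(\uppsi_T(q, p)) - \proj_q(\Psiverlet_T(q, p)) \bigr|^2 \varphibfd(p)\, \rmd p\, \pi(\rmd q).
\end{equation*}
It then suffices to control the full phase-space difference $\uppsi_T - \Psiverlet_T$ in $L^2(\pi \otimes \varphibfd)$, and I would work with a weighted norm on $\rset^{2d}$ of the form $\Ltt^{1/2}|\delta q|^2 + |\delta p|^2$ to balance the propagation of position and momentum errors. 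The final conversion back to a bound on $|\proj_q(\cdot)|$ contributes one factor of $\Ltt^{-1/2}$, and the geometric sum below will produce the remaining $\Ltt^{-1/2}$, accounting for the $\Ltt^{-1}$ in front of the final bound.

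The key identity is the telescoping
\begin{equation*}
\uppsi_T - \Psiverlet_T \;=\; \sum_{k=0}^{n-1} \Psiverlet_{k\gamma} \circ (\uppsi_\gamma - \Psiverlet_\gamma) \circ \uppsi_{(n-k-1)\gamma},
\end{equation*}
understood pointwise. Minkowski's inequality bounds the $L^2$ norm of the left-hand side by $\sum_k \mathrm{Lip}(\Psiverlet_{k\gamma}) \cdot \|\uppsi_\gamma - \Psiverlet_\gamma\|_{L^2(\pi \otimes \varphibfd)}$, using crucially that $\pi \otimes \varphibfd$ is preserved by the exact Hamiltonian flow $\uppsi_t$ in the setting of \Cref{assum:exact_hmc_kernel}, so that the inner $\uppsi_{(n-k-1)\gamma}$ does not change the law of the integration point. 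For the Lipschitz factor, a direct analysis of the Jacobian of the one-step Verlet map \eqref{eq:defPhiverlt_h} under \Cref{as:F_lip}, carried out in the weighted norm, yields $\mathrm{Lip}(\Psiverlet_\gamma) \leq \rme^{\lambda_\Hamil \gamma}$; the basic rate $\Ltt^{1/2}$ comes from the variational estimate for the continuous Hamiltonian flow $\dot q = p,\, \dot p = b(q)$, while the lower order corrections in $\lambda_\Hamil$ stem from the extra $\gamma$-powers in the Verlet Jacobian. Hence $\mathrm{Lip}(\Psiverlet_{k\gamma}) \leq \rme^{\lambda_\Hamil k \gamma}$, and summing gives $\sum_k \rme^{\lambda_\Hamil k \gamma} \lesssim \Ltt^{-1/2}\gamma^{-1}\rme^{\lambda_\Hamil T}$.

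For the one-step error I would split $\uppsi_\gamma - \Psiverlet_\gamma$ into $\uppsi_\gamma - \bar{\Psi}_\gamma$ plus $\bar{\Psi}_\gamma - \Psiverlet_\gamma$, where $\bar{\Psi}_\gamma$ denotes the Verlet map \eqref{eq:defPhiverlt_h} with $\tbg$ replaced by the exact drift $b$. The second piece is bounded by $\gamma^3 \bfLambda$ via \Cref{ass:app_F_h}, and propagating the drift discrepancy through the position and momentum updates of \eqref{eq:defPhiverlt_h} produces the factor $((2 + \gamma\Ltt/2)^2 + \Ltt)$ in front of $M_5$. For the first piece, one Taylor-expands both $\uppsi_\gamma(q, p)$ and $\bar{\Psi}_\gamma(q, p)$ to third order in $\gamma$: the position component has leading error $(\gamma^3/6)\,(\generatorH b)(q, p) = (\gamma^3/6)\,(\rmD b)(q)p$, while the momentum component has leading error of size $\gamma^3$ involving $((\generatorH)^2 b)(q, p)$. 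Squaring and integrating first over $p$ against $\varphibfd$, the identities \eqref{eq:Liouvillenormsquared}--\eqref{eq:Liouvillesquaredsquared} convert these into $\|\rmD b(q)\|_{\mathrm F}^2$, $|\generatorL b(q)|^2$, and $\|\rmD^2 b(q)\|_{\mathrm F}^2$; integrating in $q$ against $\pi$ produces $M_1$, $M_2$, and $M_4$ with the coefficients appearing in $M_\Hamil$.

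Assembling the three ingredients gives the stated bound $\gamma^2 \Ltt^{-1} \rme^{\lambda_\Hamil T} M_\Hamil^{1/2}$. The principal obstacle is the bookkeeping of the third-order Taylor expansions of $\uppsi_\gamma$ and $\bar{\Psi}_\gamma$ with uniform control of the $O(\gamma^4)$ remainders: one must verify that these remainders can be absorbed into the same constants $M_1, M_2, M_4, M_5$ without introducing new dimension-dependent quantities, and that the cross terms between the position and momentum components of the local error, once amplified by the $k$ subsequent Verlet steps, account precisely for the coefficient $(1 + \gamma^2 \Ltt)\Ltt$ in front of $M_2$ and for the $\gamma$-dependent multiplier of $M_5$ in the definition of $M_\Hamil$.
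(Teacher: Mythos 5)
Your overall strategy — synchronous coupling in phase space, a weighted norm on $(\delta q,\delta p)$, one-step Taylor analysis feeding into \eqref{eq:Liouvillenormsquared}--\eqref{eq:Liouvillesquaredsquared} and hence $M_1,M_2,M_4$ — is close in spirit to the paper's, and the arithmetic you sketch for the final prefactor ($\gamma^3$ one-step error $\times$ $\Ltt^{-1/2}\gamma^{-1}\rme^{\lambda_{\Hamil}T}$ geometric sum $\times$ $\Ltt^{-1/2}$ norm conversion) does land on $\gamma^2\Ltt^{-1}\rme^{\lambda_{\Hamil}T}M_{\Hamiltonian}^{1/2}$. However, the route you take — a telescoping of the flow maps $\uppsi_T-\Psiverlet_T=\sum_k\Psiverlet_{k\gamma}\circ(\uppsi_\gamma-\Psiverlet_\gamma)\circ\uppsi_{(n-k-1)\gamma}$ with the one-step error propagated by $\mathrm{Lip}(\Psiverlet_{k\gamma})$ — is genuinely different from the paper's, which runs a vector-valued one-step recursion $z_{k+1}\leq_2\rmA z_k+\tfrac{1}{12}\gamma^3\vec M$ on $z_k=(\Vert\rmX_k-\tilde\rmX_k\Vert_{L^2},\Ltt^{-1/2}\Vert\rmV_k-\tilde\rmV_k\Vert_{L^2})^{\transp}$ and sums the geometric series of the $2\times2$ matrix $\rmA$.

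The difference is not cosmetic: your telescoping has a gap. The propagation factor $\mathrm{Lip}(\Psiverlet_{k\gamma})$ is the Lipschitz constant of the $k$-step Verlet flow built with the \emph{approximate} drift $\tbg$, and a one-step bound $\mathrm{Lip}(\Psiverlet_\gamma)\leq\rme^{\lambda_{\Hamil}\gamma}$ requires $\tbg$ to be globally Lipschitz. But the only assumptions available are \Cref{as:F_lip}, which gives Lipschitz continuity of $b$, and \Cref{ass:app_F_h}, which gives the pointwise bound $|b-\tbg|\leq\gamma^2\bfLambda$ with $\bfLambda$ possibly unbounded. These do not imply any Lipschitz bound on $\tbg$, so the step where you control the outer $\Psiverlet_{k\gamma}$ is unjustified under the hypotheses of the proposition. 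The paper's recursion is structured precisely to sidestep this: at each step the discretization error is decomposed so that the only difference of drifts at two \emph{distinct} points is $b(\rmX_k)-b(\tilde\rmX_k)$ (hence uses only $\Ltt$ from \Cref{as:F_lip}), while $\tbg$ and $b$ are only compared at the \emph{same} Verlet-trajectory point, where \Cref{ass:app_F_h} gives the $\gamma^2\bfLambda$ correction that becomes $M_5$. As a result, the matrix $\rmA$ (and thus $\lambda_{\Hamil}$) is built entirely from $\Ltt$, with no Lipschitz constant for $\tbg$ ever appearing. If you insist on a telescoping, putting the exact flow on the outside avoids the Lipschitz problem but then forces you to integrate the Taylor remainders over the law pushed forward by the Verlet flow, which is not the law $\pi\otimes\varphibfd$ against which $M_1,M_2,M_4$ are defined. (As a smaller point: with your telescoping the $\bfLambda$-terms are evaluated at points of the exact trajectory, whose law is $\pi\otimes\varphibfd$, so you would obtain $\int\bfLambda^2\,\rmd\pi$ rather than the stated $\sup_{t}$-form of $M_5$; the latter is what naturally emerges from the paper's recursion, where the drift discrepancy is evaluated along the Verlet trajectory.) To reach the stated constants under the stated hypotheses, you should switch to the one-step recursion on the coupled phase-space distances, as in \Cref{lem:asympto_ihmc}.
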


The proof of the theorem is given in \Cref{sec:proof-crefth}. It can also be extended easily to quantify the accuracy for multiple transition steps, but for 
the applications below, it turns out to be sufficient to consider only a single
transition step of uHMC (which usually already involves many Verlet steps).
{Recall the definition of the constant $C_\dist  $ from Assumption  \Cref{assum:distance} above.}

\begin{theorem}
  \label{theo:wasserstein_2_bound_final_hmc}
  Assume  \Cref{assum:exact_hmc_kernel}, \Cref{ass:app_F_h},  \Cref{as:K_N_T} and \Cref{as:F_lip}, and fix $T\in (0,\plusinfty )$ and $p\in [1,2]$.  
Suppose that $\dist$ is a distance function on $\mathbb R^d$ 
satisfying \Cref{assum:distance}, and assume that there exists
a constant $c\in (0,1]$ such that for
any $\gamma \in \ocint{0,\bgamma}$,
 \begin{equation}
    \label{eq:convergence_uHMC_bound_W_1}
    \wassersteinLigne[p,\dist]{\pi_{T,\gamma} \rmK_{T,\gamma} , \pi  \rmK_{T,\gamma}}\leq (1-c)\wassersteinLigne[p,\dist]{\pi_{T,\gamma} , \pi} \eqsp.
  \end{equation}
  Let $\lambda_{\Hamil}$ and $M_{\Hamiltonian}$ be defined as in
  \Cref{theo:wasserstein_2_bound}.
Then for any  $\gamma\in (0,\bgamma ]$
with $T/\gamma\in\mathbb N$, 
  \begin{equation}
     \label{eq:bias_final_uHMC_1a}
\wassersteinLigne[p,\dist]{\pi_{T,\gamma } , \pi} \leq \    {\gamma^2}  \,(c\Ltt)^{-1}
\rme^{ \lambda_{\Hamil} T} C_\dist   M_{\Hamiltonian}^{1/2} .
\end{equation} 
\end{theorem}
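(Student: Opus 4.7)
The plan is to apply the triangle inequality trick of Lemma~\ref{theo:asymptotic_bias_altern} with $\rmQ = \rmK_{T,\gamma}$ (which has invariant measure $\pi_\rmQ = \pi_{T,\gamma}$) and $\rmS = \rmK_T$ (which has invariant measure $\pi_\rmS = \pi$). It will suffice to work with $n=1$, since both hypotheses of the lemma are already available in a sharp enough form at one step.

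First I would verify the contraction hypothesis \eqref{eq:varphi_bound}. Since $\pi_{T,\gamma}$ is $\rmK_{T,\gamma}$-invariant, the left-hand side of \eqref{eq:varphi_bound} with $n=1$ is exactly $\wassersteinLigne[p,\dist]{\pi\,\rmK_{T,\gamma},\pi_{T,\gamma}} = \wassersteinLigne[p,\dist]{\pi_{T,\gamma}\,\rmK_{T,\gamma},\pi\,\rmK_{T,\gamma}}$, so assumption \eqref{eq:convergence_uHMC_bound_W_1} yields \eqref{eq:varphi_bound} with $\varphi(1)=1-c<1$.

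Second I would establish the one-step accuracy hypothesis \eqref{eq:epsilon_accuracy}. Since $\pi$ is $\rmK_T$-invariant, $\pi\,\rmK_T = \pi$, so
\begin{equation*}
\wassersteinLigne[p,\dist]{\pi\,\rmK_T,\pi\,\rmK_{T,\gamma}} = \wassersteinLigne[p,\dist]{\pi\,\rmK_T,\pi\,\rmK_{T,\gamma}}.
\end{equation*}
From Assumption~\Cref{assum:distance}, for any coupling $\zeta$ of two probability measures $\mu,\nu$, one has $\int \dist(x,y)^p\,\rmd\zeta \le C_\dist^p \int |x-y|^p\,\rmd\zeta$, hence $\wassersteinD[p,\dist]\le C_\dist\,\wassersteinD[p]\le C_\dist\,\wassersteinD[2]$ whenever $p\in[1,2]$ (using Jensen's inequality for the latter). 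Combining with Proposition~\ref{theo:wasserstein_2_bound}, I get
\begin{equation*}
\wassersteinLigne[p,\dist]{\pi\,\rmK_T,\pi\,\rmK_{T,\gamma}}\ \le\ C_\dist\,\wasserstein[2]{\pi\,\rmK_T,\pi\,\rmK_{T,\gamma}}\ \le\ C_\dist\,\gamma^2\,\Ltt^{-1}\rme^{\lambda_{\Hamil} T}M_{\Hamiltonian}^{1/2},
\end{equation*}
which is \eqref{eq:epsilon_accuracy} with $\varepsilon(1)$ equal to the right-hand side.

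Finally, I would insert these into \eqref{eq:accuracyinvariantmeasure2} with $n=1$ to obtain
\begin{equation*}
\wassersteinLigne[p,\dist]{\pi_{T,\gamma},\pi}\ \le\ \frac{\varepsilon(1)}{1-\varphi(1)}\ =\ \frac{C_\dist\,\gamma^2\,\Ltt^{-1}\rme^{\lambda_{\Hamil} T}M_{\Hamiltonian}^{1/2}}{c},
\end{equation*}
which is exactly \eqref{eq:bias_final_uHMC_1a}. Essentially no obstacles arise: the whole argument is a packaging step, as Proposition~\ref{theo:wasserstein_2_bound} already carries the finite-time Verlet accuracy in $\mathbf{W}_2$ (this is where all the real work sits, in controlling the quantities $M_1,M_2,M_4,M_5$), and the contraction hypothesis is taken as an input. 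The only mild point to watch is the passage $\wassersteinD[p,\dist]\le C_\dist\,\wassersteinD[2]$, which requires both $p\le 2$ and Assumption~\Cref{assum:distance}; both are in force.
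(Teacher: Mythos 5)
Your proof is correct and follows essentially the same route as the paper: the paper writes out the triangle inequality and rearranges directly, which is exactly the specialization of Lemma~\ref{theo:asymptotic_bias_altern} to a single step, and both proofs use \Cref{assum:distance} together with $\wassersteinD[p]\le\wassersteinD[2]$ to reduce to Proposition~\ref{theo:wasserstein_2_bound}. Calling the lemma explicitly with $n=1$ rather than rearranging by hand is a purely cosmetic difference.
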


\begin{proof}By the triangle inequality,
 \eqref{eq:convergence_uHMC_bound_W_1}, \Cref{assum:distance}, and \Cref{theo:wasserstein_2_bound},
\begin{eqnarray*}
\wassersteinLigne[p,\dist]{\pi_{T, \gamma} , \pi} &\leq & 
\wassersteinLigne[p,\dist]{\pi_{T,\gamma}\rmK_{T,\gamma} , \pi \rmK_{T,\gamma}} + \wassersteinLigne[p,\dist]{\pi \rmK_{T,\gamma} , \pi \rmK_{T}}\\
&\le &(1-c)\wassersteinLigne[p,\dist]{\pi_{T,\gamma} , \pi}+
C_\dist  {\gamma^2} \, \Ltt^{-1}
\rme^{ \mu T}   M_{\Hamiltonian}^{1/2} .
\end{eqnarray*}
The assertion follows by rearranging.
\end{proof}

{\Cref{theo:wasserstein_2_bound_final_hmc} is a counterpart to \Cref{theo:bias_final_ula_1}.
It would also be possible to state a corresponding counterpart to \Cref{theo:bias_final_ula_2}. We do not consider such a result here
since in contrast to the results for ULA, it does not seem to provide a substantial improvement compared to \Cref{theo:wasserstein_2_bound_final_hmc}.}\smallskip

Similarly as in the results above, we see that
if $\Ltt$ and $c$ are independent of the dimension $d$, then 
the dimension dependence of the upper bounds is determined 
completely by the key quantities $C_\dist$ and $M_\Hamiltonian$.
In Section \ref{example:EulerhighD}, we will show that under appropriate assumptions and
depending on the structure of the model, the resulting dimension dependence 
for upper bounds of the standard $\rml^1$ Wasserstein distance $\mathbf W_1(\pi_\gamma ,\pi )$
is then either of order $O(\gamma^2d^{1/2})$ or of order $O(\gamma^2d)$.

\tcr{
  We conclude this section by noting that, similar to \Cref{theo:bias_final_ula_2} for ULA, \Cref{theo:wasserstein_2_bound_final_hmc} combined with the convergence of uHMC and xHMC obtained in \cite{BoEbZi2020}, gives complexity bounds for uHMC to achieve a precision $\varepsilon$ in Wasserstein distance. 
  }

\begin{remark}[Total variation bias for uHMC] Quantifying the TV bias for uHMC is more involved than for Euler-Maruyama discretizations. Corresponding results are derived in the paper \cite{bourabee2021mixing} that has been prepared in parallel to this work.
\end{remark}



\section{Accuracy in high dimension: Examples and applications}
\label{sec:appl-exampl}

We now analyze the dimension dependence of the bounds we obtain for ULA and uHMC when applied to a target probability measure with density with respect to the Lebesgue measure of the form \eqref{eq:density_pi}.
{At the end of this section we also discuss the relation of our results
for unadjusted MCMC methods to mixing time bounds for the corresponding
methods with Metropolis adjustment.}\smallskip

{Recall the definitions of the constants $C_\dist$ from  \Cref{assum:distance} , $M_{\Langevin}$ from  \Cref{theo:bound_w_2_euler}, $\tM_{\Langevin}$ from  \Cref{theo:bound_w_2_euler_altern}, and $M_{\Hamiltonian}$ from \Cref{theo:wasserstein_2_bound}.}

\subsection{Accuracy of ULA and uHMC}\label{example:EulerhighD}


Suppose that $\tbg = b$ for all $\gamma \in \ocint{0,\bgamma}$, and
assume that  $b$ satisfies \Cref{as:b_lip} and there exist $K,J\in (0,\plusinfty)$ such that for any $x\in \rset^d$, 
\begin{align}
\label{eqLaplace}
\left|\Delta b(x)\right|^2 & = 
\sum_{i,j=1}^d\langle  \partial^2_{ii}b(x),\partial^2_{jj}b(x)\rangle
=\sum_{i,j,k=1}^d  \partial^2_{ii}b_k(x)\, \partial^2_{jj}b_k(x)
\ \le K\eqsp \eqsp,\\
\label{eqFronbenius2D}
\left\| \rmD^2 b(x)\right\|^2_{\mathrm{F}} & 
=\sum_{i,j=1}^d\left| \partial^2_{ij}b(x)\right|^2
 = \sum_{i,j,k=1}^d\left| \partial^2_{ij}b_k(x)\right|^2
\ \le J \eqsp ,
\end{align}
Then, one easily verifies that $M_3=M_5=0$, 
$$
M_1\ \le  2 K\, +\, 4L^2\left(|b(0)|^2+L^2\int |x|^2\, \rmd \pi\right),\quad M_2\ \le dL^2,\quad\text{and}\quad
M_4\ \le J\eqsp .
$$
It is natural to assume that $|b(0)|^2$ and $\int |x|^2\, \rmd \pi$ are of order $O(d)$.
Then, if for a family of models with varying dimension, the Lipschitz constant $L$ is of order $O(1)$ and the constants $K$ and $J$ are of order $O(d)$,
then the constant $ M_{\Langevin}$ in Theorem \ref{theo:bias_final_ula_1} 
and the constant $ M_{\Hamiltonian}$ in \Cref{theo:wasserstein_2_bound_final_hmc} are of order $O(d)$.
If we assume additionally that $A$, $c$, $\psi$ and $C_\dist$
can be chosen independently of the dimension then 
the upper bounds in \eqref{eq:bias_final_ula_1a} and \eqref{eq:bias_final_ula_1b}
are of order $O(d^{1/2}\gamma )$. Similarly, if we assume that $c$ and $C_\dist$
can be chosen independently of the dimension then 
the upper bound in \eqref{eq:bias_final_uHMC_1a} is of order $O(d^{1/2}\gamma^2 )$.  As remarked in the introduction,
these orders are sharp even in the product case with $\dist (x,y)=|x-y|$.\smallskip

Similarly, one verifies that $\tM_5 = 0$,
\begin{align}
\tM_1 &\le  2K\, +\, 4L^2\left(|b(0)|^2+\int |x|^2\, \rmd \pi_\gamma\right) \eqsp,\quad\tM_2\ \le dL^2\eqsp,\\
  \tM_3 &\le  2 K\, +\, 4L^2\left(|b(0)|^2+\sup_{t\ge 0}\int |x|^2\, \rmd(\pig\rm\rmP_t)\right)\eqsp,\quad\tM_4\ \le dL^2\eqsp,\\
  \tM_6 &\le  2L^2\left(|b(0)|^2+\int |x|^2\, \rmd \pi_\gamma\right) \eqsp.
\end{align}
Again, it can be verified under weak assumptions that these constants, and hence $ \tM_{\Langevin}$ in Theorem \ref{theo:bias_final_ula_2} are of order $O(d)$ if $L$ is of order $O(1)$ and $K$ is of order $O(d)$, see Remark \ref{rem:momentbounds} below. \smallskip

The 
main constraint for corresponding bounds with optimal dimension dependence is the
assumption that $K$ is of order $O(d)$. Note that this assumption is trivially satisfied in
the Gaussian case (i.e., $b$ is linear), and also in the product case (i.e., $b(x) = (b_1(x_1),\ldots,b_d(x_d))$), provided a uniform bound on the components holds. More generally, 
it holds in several important classes of models that are frequently studied in applications,
including the following:
\begin{enumerate}
\item[(i)] \emph{Finite-range graphical models \cite{Jordan}.} Suppose that for $d\in\mathbb N$, there exists a graph $(V_d,E_d)$
with vertex set $V_d=\{ 1,2,\ldots ,d\}$ and maximal degree $n\in\mathbb N$ such that
$\partial^2_{ij}b_k=0$ whenever $i\neq k$ and $\{ i,k\}\not\in E_d$. Then $\left\langle \partial^2_{ii}b,\partial^2_{jj}b\right\rangle$ 
and $\partial^2_{ij}b$
vanish if $i$ and $j$ do not have a common neighbour, and thus \eqref{eqLaplace} and \eqref{eqFronbenius2D} are satisfied with
$K=dn^2\sup_{i,x} |\partial^2_{ii}b(x)|^2$ and
$J=dn^2\sup_{i,j,x} |\partial^2_{ij}b(x)|^2$.
\item[(ii)] \emph{Mean-field interactions \cite{BouRabeeSchuh}.} Suppose that there exists a finite constant $C$ such that 
$  |\partial^2_{ii}b_i(x)|\le C$ for all $i$, and
$  |\partial^2_{ij}b_k(x)|\le C/ d$ for all $i,j,k$ such that $k\neq i$. Then \eqref{eqLaplace} and
\eqref{eqFronbenius2D} are satisfied with 
$K=J=2C^2d$.
\item[(iii)] \emph{Perturbations and composition.} If \eqref{eqLaplace} and \eqref{eqFronbenius2D} are satisfied with constants $K_1$, $J_1$, and $K_2$, $J_2$ of order $O(d)$ for
two drift functions $b_1$ and $b_2 $, then a corresponding condition holds for $b=b_1+b_2$.
\end{enumerate}
It is also possible to verify a corresponding dimension dependence of $K$ and $L$ under locally uniform bounds 
on the derivatives of $b$, combined with an appropriate drift condition. However, although bounds with optimal
dimension dependence hold for many important models, in general, assuming that the
first two partial derivatives of $b$ are uniformly bounded, one can only ensure that
$K$, $J$, $M_{\Langevin} $,  $ \tM_{\Langevin}$ and $ M_{\Hamiltonian}$  are of order $O(d^2)$. In this general case,
the order of
the upper bounds in Theorems \ref{theo:bias_final_ula_1},
\ref{theo:bias_final_ula_2} and \ref{theo:wasserstein_2_bound_final_hmc} differs from the one in the product case by a factor $d^{1/2}$.

\begin{remark}\label{rem:momentbounds}
We briefly comment on how to obtain bounds on the constants $\tM_i$, $i=1,2,\ldots ,5$, in \Cref{theo:bound_w_2_euler_altern} and \Cref{theo:bias_final_ula_2} in the case where the derivatives 
of $b$ are not uniformly bounded. In this case, one requires upper bounds on
$\int W\,  \rmd \pi_{\gamma}$ uniformly in $\gamma \in \ocint{0,\bgamma}$ and on
$\int W\, \rmd(\pi_\gamma \rmP_t)=\int \rmP_tW\, \rmd \pi_{\gamma}$ uniformly in $\gamma \in \ocint{0,\bgamma}$ and $t \geq 0$ for $W: \rset^d \to \rset_+$. Such upper bounds can be established if $W$ satisfies Lyapunov conditions for the transition kernels $\rmR_{\gamma}$, $\gamma \in \ocint{0,\bgamma}$ and the generator $\generatorL$. More specifically, assume that there exist  $\mathrm{a} \in \coint{0,1}$ and $\mathrm{b}\geq 0$ such that for any $\gamma \in \ocint{0,\bgamma}$ and $x \in \rset^d$,
\begin{equation}
\rmR_{\gamma} W(x) \leq \mathrm{a}^{\gamma} W(x) + \mathrm{b}\gamma \eqsp .
\end{equation}
Then by \cite[Lemma 1]{durmus:moulines:2016}, we obtain that for any $\gamma \in \ocint{0,\bgamma}$,
\begin{equation}
  \label{eq:12}
  \rmR_{\gamma}^{\ceil{1/\gamma}} W(x) \leq \mathrm{a} W(x) + \mathrm{b}\mathrm{a}^{-\bgamma}/\log(1/\mathrm{a}) \eqsp, 
\end{equation}
and therefore by \cite[Theorem 19.4.1]{douc:moulines:priouret:soulier:2018},
\begin{equation}
  \label{eq:bound_int_pi_gamma_unif}
  \int W\,  \rmd \pi_{\gamma}\leq \ \mathrm{b}\mathrm{a}^{-\bgamma}/\{(1-\mathrm{a})\log(1/\mathrm{a})\} \eqsp.
\end{equation}
Similarly, if $W$ is twice continuously differentiable, and there exist   $\alpha \in (0,\plusinfty )$ and $\beta \in [ 0,\plusinfty )$ such that for any $x \in \rset^d$,
\begin{equation}
  \label{eq:11}
  \generatorL W(x) \leq -\alpha W(x) + \beta \eqsp,
\end{equation}
then by It\=o's formula it holds that for any $t \geq 0$ and $x \in \rset^d$,
\begin{equation}
  \label{eq:bound_int_P_t_unif_t}
  \rm\rmP_t W(x) \leq \rme^{-\alpha t} W(x) + (1-\rme^{-\alpha t}) \beta/\alpha  \eqsp.
\end{equation}
Combining \eqref{eq:bound_int_pi_gamma_unif} and \eqref{eq:bound_int_P_t_unif_t} implies upper bounds 
for $\int \rmP_tW\, \rmd \pi_{\gamma}$.
\end{remark}

\begin{example}[Euler scheme for asymptotically contractive drifts]\label{example:Euler}
Suppose in addition to the assumptions made above that there exists 
$\mathcal K,\mathcal R\in (0,\plusinfty )$ such that for any $x,y\in\mathbb R^d$ with $|x-y|\ge\mathcal R$, 
\begin{equation}
\label{eq:asycontr} \langle b(x)-b(y),x-y\rangle\ \le -\mathcal K|x-y|^2.
\end{equation}
Then as a consequence of \cite[Corollary 2]{eberle2016}, as well as \cite[Theorem 2.12]{EberleMajka} and \Cref{lem:bound_n_Q}, respectively, there exists an explicit distance function 
$\dist $ on $\mathbb R^d$ and explicit constants $c,m,\bgamma\in (0,\plusinfty )$ 
that depend only on $L$, $\mathcal K$ and $\mathcal R$ but not on the dimension $d$
such that for $p=1$, Conditions \eqref{eq:convergence_ula_bound_W_2} and \eqref{eq:convergence_ula_bound_W_1} are satisfied with $A=1$ for all $\gamma\in (0,\bgamma ]$, and for all $x,y\in\mathbb R^d$, 
\begin{equation}
\label{eq:equivalencemetics} m|x-y|\ \le \dist (x,y)\ \le  |x-y|.
\end{equation}
Hence in this case, Theorems  \ref{theo:bias_final_ula_1} and \ref{theo:bias_final_ula_2} show that
\begin{eqnarray*}\mathbf W_1(\pi_\gamma ,\pi ) &\le & m^{-1}\wassersteinLigne[1,\dist]{\pi_{\gamma} , \pi} \leq \      \gamma\,  \rme^{1+\lambda\gamma }\left(\frac{\lambda}{c}+1\right)   m^{-1} M_{\Langevin}^{1/2},\\
\mathbf W_1(\pi_\gamma ,\pi ) &\le & m^{-1}\wassersteinLigne[1,\dist]{\pi_{\gamma} , \pi} \leq \     \gamma\,  \rme^{1+(1+\kappa)\gamma }\left(\frac{1+\kappa}{c}+1\right)   m^{-1} \tM_{\Langevin}^{1/2},
\end{eqnarray*}
respectively.
Since for fixed values of $L$, $K$ and $\mathcal R$, all the other constants are dimension-free, the dimension dependence of these bounds is completely determined by $\tM_{\Langevin}^{1/2}$ and $  M_{\Langevin}^{1/2}$. As pointed out above, the resulting bounds
are of order $O(\gamma d^{1/2})$ for models of type (i), (ii) or (iii), but only
of order $O(\gamma d)$ for general models. Consequently, in order to 
achieve a given bound on the asymptotic bias for general Lipschitz continuous functions, the step size $\gamma$ in the unadjusted Langevin algorithm has to be chosen
of order $O(d^{-1/2})$ for ``nice'' models, and of order $O(d^{-1})$ for general models. Regarding the total variation bounds established in \Cref{theo:bound_tv}, we get bounds
of order $O(\gamma \log(\gamma^{-1}) d^{1/2})$ for models of type (i), (ii) or (iii), but only
of order $O(\gamma \log(\gamma^{-1}) d)$ for general models.
\end{example}

\begin{remark}[$\mathbf W_2$ bounds]
In the globally contractive case where the conditions in Example \ref{example:Euler} are satisfied with $\mathcal R=0$, one also obtains corresponding
bounds for $p=2$ and $\dist (x,y)=|x-y|$ and we get back \cite[Corollary 9]{durmus:moulines:bernoulli}. 
\end{remark}


\begin{example}[Unadjusted HMC for asymptotically contractive drifts]\label{example:uHMCasympcontr}
Suppose again that there exist 
$\mathcal K,\mathcal R\in (0,\plusinfty )$ such that Condition 
\eqref{eq:asycontr} is satisfied for all $x,y\in\mathbb R^d$ with $|x-y|\ge\mathcal R$. Then by \cite[Theorem 2]{BouRabeeSchuh} and \Cref{lem:bound_n_Q}, there exist an explicit distance function 
$\dist $ on $\mathbb R^d$ and explicit constants $c,m,\bgamma\in (0,\plusinfty )$ 
that depend only on $L$, $\mathcal K$ and $\mathcal R$ but not on the dimension $d$
such that 
\eqref{eq:equivalencemetics} holds, and
Condition \eqref{eq:convergence_uHMC_bound_W_1} is satisfied for $p=1$  and all $\gamma\in (0,\bgamma ]$.
Hence by Theorem  \ref{theo:wasserstein_2_bound_final_hmc}, 
$$\mathbf W_1(\pi_\gamma ,\pi ) \ \le \ m^{-1}\wassersteinLigne[1,\dist]{\pi_{\gamma} , \pi} \leq \     
  {\gamma^2}  \,c^{-1} \Ltt^{-1}
\rme^{ \mu T}   m^{-1}M_{\Hamiltonian}^{1/2} .
$$
For fixed values of $L$, $K$ and $\mathcal R$, the dimension dependence of this bound is completely determined by $  M_{\Hamiltonian}^{1/2}$. As shown above, the resulting bound
is of order $O(\gamma^2 d^{1/2})$ for models of type (i), (ii) or (iii), but only
of order $O(\gamma^2 d)$ for general models. Consequently, in order to 
achieve a given bound on the asymptotic bias for general Lipschitz continuous functions, the step size $\gamma$ in unadjusted HMC has to be chosen
of order $O(d^{-1/4})$ for ``nice'' models, and of order $O(d^{-1/2})$ for general models.
\end{example}

\begin{example}[Euler scheme for weakly interacting systems]\label{example:Eulermeanfield}
Another class of models for which dimension-free bounds for convergence to
equilibrium are available are mean-field models and more general interacting
systems with weak interactions, see \cite{eberle2016}. Suppose that $d=nk$
with $n,k\in\mathbb N$, and assume that there exist twice continuously differentiable functions $b_0:\mathbb R^k\to\mathbb R^k$ and $\gamma:\mathbb R^d\to\mathbb R^k$ such that for $x=(x_1,x_2,\ldots ,x_n)\in \mathbb R^{nk}$,
$$b(x)= (b_1(x),b_2(x),\ldots ,b_n(x))\quad\text{ with }\quad 
b_i(x)= b^0 (x_i)+\gamma_i(x).$$
We assume that $b^0 $ satisfies corresponding conditions as $b$ in Example \ref{example:Euler} with constants $\mathcal R$, $L$ and $K$, and we consider the $\ell_1$ metric 
$$\dist (x,y)= \sum_{i=1}^n\dist_0 (x_i,y_i) $$
where the distance function $\dist_0$ on $\mathbb R^k$ is chosen as in Example \ref{example:Euler} (but for $b^0$ instead of $b$). Then by
\cite[Theorem 7]{eberle2016}, there exist  $c,\epsilon >0$ that depend only on  $\mathcal R$, $L$ and $K$ such that Condition \eqref{eq:convergence_ula_bound_W_2} is satisfied with $A=1$ whenever 
\begin{equation}\label{weak_interaction}
\sum_{i=1}^n|\gamma_i(x)-\gamma_i(y)|\ \le \epsilon\, \sum_{i=1}^n|x_i-y_i|\qquad\text{for all }x,y\in\mathbb R^d.
\end{equation}
Since $\dist (x,y)\le \sum_{i=1}^n |x_i-y_i| \le n^{1/2} |x-y|$,
Theorem  \ref{theo:bias_final_ula_2} implies the bound
$$
\mathbf W_{1,\ell^1}(\pi_\gamma ,\pi ) \ \le \ m^{-1}\wassersteinLigne[1,\dist]{\pi_{\gamma} , \pi} \leq \     \gamma\,  \rme^{1+(1+\kappa)\gamma }\left(\frac{1+\kappa}{c}+1\right)   m^{-1} n^{1/2}\tM_{\Langevin}^{1/2},
$$
where $\kappa$ is the constant in \Cref{ass:one_side_lip_b}. If we assume
that $\kappa$ does not depend on the number $n$ of components,
then the upper bound for $\mathbf W_{1,\ell^1}(\pi_\gamma ,\pi )$ depends on $n$ only through $\tM_{\Langevin}^{1/2}$. If we assume additionally that
there exists a finite constant $C$ such that $|\Delta b_i|\le C$ for all
$i\in\{ 1,2,\ldots ,n\}$ then Condition \eqref{eqLaplace} is satisfied with
$K=C^2n$, and hence the upper bound for $\mathbf W_{1,\ell^1}(\pi_\gamma ,\pi )$ is of order $O(\gamma n)$.
This is the optimal order in the product case where $\gamma\equiv 0$. However, the assumptions above are satisfied in more general situations, including for example mean-field models of McKean-Vlasov type with weak interactions where $b_i(x)=-\nabla V(x_i)+\delta n^{-1}\sum_{j\neq i} \nabla W(x_j-x_i)$ for sufficiently 
regular confinement and interaction potentials $V$ and $W$ and a sufficiently 
small coupling parameter $\delta$. On the other hand, for large $\delta$,
these models can exhibit phase transitions. In that case, because of the non-uniqueness of invariant measures for the limiting McKean-Vlasov equation, $\mathbf W_{1,\ell^1}(\pi_\gamma ,\pi )$ will usually degenerate rapidly in high dimensions.
\end{example}

\begin{example}[Unadjusted HMC for mean-field systems]\label{example:uHMCmeanfield}
Corresponding statements as in Example \ref{example:Eulermeanfield} also 
hold for unadjusted Hamiltonian Monte Carlo applied to mean-field models, except 
that the order in $\gamma$ improves from $O(\gamma )$ to $O(\gamma^2)$.
We refer to Bou-Rabee and Schuh \cite{BouRabeeSchuh} for a detailed
analysis of this setup.
\end{example}

\subsection{The Gaussian case}\label{sec:gauss}

For a standard normal target distribution $\pi (\rmd x )=(2\pi)^{-d/2}\exp (-U(x))\, \rmd x$ 
with $U(x)=|x|^2/2$, the asymptotic Wasserstein bias 
of ULA and uHMC can be computed explicitly. The result serves as a benchmark for the general case.

\begin{example}[ULA with standard normal target distribution]\label{example:invuLAGauss}
In this case, for any $x \in \rset^d$, $\tbg(x) = -x$.
It is easy to show that for ${\gamma}\in (0,2)$, \Cref{as:r_gamma} is satisfied and the measure $\pi_{\gamma}$ is the zero-mean Gaussian distribution with covariance matrix $(1-\gamma/2)^{-1}\,\Idd$.
Moreover,
it can be shown
that the synchronous coupling given by $(G,(1-{\gamma}/2)^{-1/2}G)$
where $G$ is a $d$-dimensional zero-mean Gaussian random variable with covariance matrix $\Idd$, is an optimal coupling
of the centered normal distributions $\pi$ and $\pi_{\gamma}$ w.r.t.\ 
$\wassersteinD[p]$ for every $p\in [1,\plusinfty )$. Indeed, by rotational symmetry, this follows from the results in the one-dimensional case
\cite{McCann}, noting
that for any coupling, the average $L^p$ distances are
lower bounded by corresponding Wasserstein distances of the one-dimensional
marginal distributions of the radial parts; see also \cite{GivensShortt} for the case $p=2$. Hence 
\begin{eqnarray*}
\wassersteinD[p] (\pi_{\gamma} ,\pi)& =& \mathbb E\left[ \left|({1-\tfrac{\gamma}2})^{-1/2}G-G\right|^p\right]^{1/p}\\
& =& \left| ({1-\tfrac{\gamma}2})^{-1/2}-1\right|\, \mathbb E\left[ \left| G\right|^p\right]^{1/p}\  \geq  C\gamma d^{1/2} \eqsp,
\end{eqnarray*}
for some constant $C > 0$ independent of $\gamma $ and $d$.
\end{example}

\begin{example}[uHMC with standard normal target distribution]\label{example:invuHMCGauss}
If $ U(q)=|q|^2/2$, then a step of the Verlet integrator is given by 
$\Psiverlet (q,p)= (q',p')$, where 
$$ q'= \left(1-\tfrac{{\gamma}^2}{2}\right)q+{\gamma}p,\quad p'= \left(1-\tfrac{{\gamma}^2}{2}\right)p-{\gamma}\left( 1-\tfrac{{\gamma}^2}{4}\right)q.$$
It can be easily verified that for $\gamma\in (0,2)$, this map preserves
the modified Hamiltonian
\begin{equation}
    \label{HamiltonianVerlet}
    H_{\gamma}(q,p)= \tfrac 12\left( 1-\tfrac{{\gamma}^2}{4}\right)|q|^2\, +\, \tfrac 12 |p|^2 \eqsp,
\end{equation}
i.e., $H_{\gamma}\circ\Psiverlet_{\gamma}=H_{\gamma}$. Since $\Psiverlet_{\gamma}$ also 
preserves the Lebesgue measure on $\mathbb R^{2d}$, we see that the probability measure with density proportional to $\exp (-H_{\gamma}(q,p))$ is preserved under $\Psiverlet_{\gamma}$, and also under momentum 
randomizations. The unique invariant probability measure $\pi_{T,\gamma}$
of uHMC in position space is the first marginal of this measure, i.e., for every $T>0$, $\pi_{T,\gamma}$ is the $d$-dimensional zero-mean Gaussian measure with covariance matrix $( {1-\gamma^2/4})^{-1}\,\Idd$.
Therefore, similarly as in  \Cref{example:invuLAGauss}, we obtain
\begin{eqnarray*}
\wassersteinD[p] (\pi_{T,\gamma} ,\pi)
& =& \left| (1-\gamma^2/4)^{-1/2}-1\right|\, \mathbb E\left[ \left\| G\right\|^p\right]^{1/p}\ \geq C\gamma^2 d^{1/q}  \eqsp. 
\end{eqnarray*}
for some constant $C \geq 0$ independent of $\gamma $ and $d$.
%
\end{example}

{
\subsection{Comparison of unadjusted and Metropolis-adjusted MCMC methods}\label{adjustornotadjust}

As an alternative to applying unadjusted MCMC methods, it is very common
to use Metropolis-Hastings (MH) methods where the transition steps of unadjusted MCMC methods can be used  as proposals, \cite{RobertCasella,roberts:tweedie:1996,neal2011mcmc}.

An obvious advantage of the Metropolis-adjustment is that one obtains a 
Markov chain that  exactly preserves  the target distribution, i.e., the
asymptotic bias vanishes. Consequently, one can at least in
principle approximate
the target distribution with arbitrary precision by running
the MH Markov chain for a sufficiently long time. Moreover, the number
of steps required to achieve a given accuracy  $\epsilon >0$ is of
order $\log \left(\epsilon^{-1} \right)$, while for  inexact schemes,
the step size $\gamma$ has to be adjusted to the desired accuracy, resulting
in a complexity of order $\epsilon^{-\alpha}$ where $\alpha =1$ for ULA
and $\alpha =1/2$ for uHMC.
On the other hand, a disadvantage of MH adjustment is that a high
rejection rate can lead to  slow mixing of the Metropolis-adjusted Markov 
chain, while the mixing properties of the unadjusted Markov chain usually remain stable even for larger step sizes (at the cost of introducing an
asymptotic bias in the estimates). Moreover, the non-smooth dependence
of the trajectories of Metropolis-adjusted chains on parameters or initial
data can cause problems  for both the theoretical analysis and 
practical applications such as the estimation of sensitivities.

To compare Metropolis-adjusted and unadjusted MCMC methods it is useful to distinguish two regimes: 
\begin{itemize}
\item[(i)] If the \emph{acceptance rate} of the Metropolis-adjusted scheme is \emph{``sufficiently high''} then one might expect that the
adjusted chain has as  good mixing properties as the unadjusted chain.
However, no proof of such a general fact is known, and moreover,
the acceptance rate may vary considerably in different regions of the state
space. So far, mixing properties for MALA with a step size $\gamma$ of
order $O(d^{-1/2})$ have been proven \emph{only for strongly log-concave distributions} and \emph{for a warm start}, i.e.,
when the initial distribution already has a relative density w.r.t.\ the target distribution
that is bounded by a fixed constant \cite{chewi2021optimal,wu2021minimax}. It is not known
how such a warm start can be generated in practice, and the best available bounds
for a cold or feasible start require a step size $\gamma $ of order $O(d^{-1})$ \cite{lee2020logsmooth,chen2020fast}. Indeed, it can be shown that this order cannot be improved in general \cite{lee2021lower}, although a better dimension dependence may hold for
subclasses of nice models. The existing rigorous upper bounds for HMC with Metropolis-adjustment are even less satisfactory \cite{BoEbZi2020,chen2020fast}.
\item[(ii)] If, on the other hand, the \emph{acceptance rate} of the Metropolis-adjusted
scheme \emph{degenerates} then it can be  easily shown by a conductance argument that the mixing properties and even the relaxation time also degenerate \cite{eberle:lecture:notes:markov}.
Nevertheless, the unadjusted chain will often have good mixing properties
even for large step sizes. Our results show that in this case, approximate
samples produced by the unadjusted chain can sometimes still provide useful information. In particular, for nice models, the asymptotic Wasserstein bias of unadjusted
HMC is well behaved for step sizes $\gamma$ of order $O(d^{-1/4})$,
but it is known that for adjusted HMC, the acceptance probability can degenerate in this case, unless a warm start condition is assumed \cite{eberle:lecture:notes:markov}.
Moreover, Wasserstein distances are not scale invariant, and the bias in estimating integrals $\int_{\rset^d} f\, \rmd\pi$
by inexact MCMC methods depends on the regularity of the function $f$.
Therefore, even if the Wasserstein bias in a certain metric grows with
the dimension, it may be possible to obtain good approximations 
for integrals of well-behaved observables.
Indeed, we have already seen in the introduction that for example for intensive quantities in molecular dynamics simulations, approximations
are sometimes possible even for step sizes that do not depend on the dimension at all.
This shows another important difference between unadjusted and
Metropolis-adjusted schemes: whereas the latter seem to either degenerate
or work well, unadjusted schemes with large step sizes can still produce
a good approximation for nice observables.
\end{itemize}
In practice, it is usually not known how to adjust the step size to obtain reliable estimates. One possibility, arising from the above discussion, might be to run an unadjusted chain with a large step size at the beginning of the simulation, and then reduce the step size until sufficiently high acceptance probabilities for the Metropolis-adjustment are 
achieved (this could be tested empirically), so that the chain can be run
with Metropolis-adjustment from now on  to fine tune the estimates. 
An important question for future research might be to clarify more precisely
what ``sufficiently high'' means and to  rigorously analyze if unadjusted
schemes are indeed able to generate good initial distributions for Metropolis-adjusted methods.}

\section{Proofs} \label{sec:proofs}

{ This section contains the proofs of the main results. In all cases, the main idea is to apply the triangle inequality trick from Lemma \ref{theo:asymptotic_bias_altern}. Then, assuming convergence bounds in Wasserstein distance for
the exact (respectively approximate) dynamics, the asymptotic bias can be quantified
if we can control the Wasserstein distance between the dynamics and the stationary distribution of the scheme (respectively the iterates of the discretization scheme and the target). To this end, we compare the $L^2$ distance between the
exact and approximate dynamics driven by the same noise. While this
is standard in the analysis of numerical schemes for SDE \cite{kloeden2011numerical}, we carefully analyze how discretization errors propagate along the iterations of the scheme to obtain precise bounds with the correct dependence on dimension. 
\smallskip

After briefly reviewing basic facts on Wasserstein bounds, we first prove
the main results for ULA  (Theorems \ref{theo:bias_final_ula_1} and  \ref{theo:bias_final_ula_1}) and then the main result for uHMC  (Theorem
 \ref{theo:wasserstein_2_bound_final_hmc}).}

\subsection{Wasserstein bounds for transition kernels}
\label{sec:proof-crefl}

For the reader's convenience, we recall the proof of the following well-known result.

\begin{lemma}
  \label{lem:bound_n_Q}
Suppose that $\dist : \mathbb R^d\times\mathbb R^d \to \rset_+$ is a lower semicontinuous distance function, and 
$\rmP$ is a Markov transition kernel
on  $(\mathbb R^d,\mathcal B(\mathbb R^d))$. Let $p\in [1,\plusinfty )$.
If there exists  
$\alpha \geq 0$ such that for all $x,y \in \mathbb R^d$,
\begin{equation}
  \label{eq:varphi_contraction}
  \wassersteinLigne[p,\dist]{\updelta_x \rmP , \updelta_y \rmP}\leq
\alpha \, \dist(x,y) \eqsp ,
\end{equation}
then for all $n\in\mathbb N$ and all probability measures $\mu ,\nu\in\mathcal P(\mathbb R^d)$,
\begin{equation}
  \label{eq:varphi_contraction}
  \wassersteinLigne[p,\dist]{\mu \rmP^n , \nu \rmP^n}\leq
\alpha^n \, \wassersteinLigne[p,\dist]{\mu  , \nu } \eqsp .
\end{equation}
\end{lemma}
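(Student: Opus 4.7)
The plan is to prove the lemma by induction on $n$, with the base case $n=1$ being the key step: from the contractivity on Dirac measures one first upgrades to contractivity on arbitrary probability measures, and then iteration yields the $\alpha^n$ bound.

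For the base case, I would fix a coupling $\zeta \in \coupling{\mu,\nu}$ and, for each pair $(x,y) \in \mathbb{R}^d \times \mathbb{R}^d$, select a coupling $\zeta_{x,y} \in \coupling{\delta_x \rmP, \delta_y \rmP}$ that (nearly) realizes $\wassersteinLigne[p,\dist]{\updelta_x \rmP , \updelta_y \rmP}$. I would then define a coupling $\eta$ of $\mu \rmP$ and $\nu \rmP$ by the gluing
\begin{equation*}
\eta(A) \ = \ \int_{\mathbb{R}^d \times \mathbb{R}^d} \zeta_{x,y}(A)\, \zeta(\rmd x,\rmd y) , \qquad A \in \mcb{\mathbb{R}^d \times \mathbb{R}^d}.
\end{equation*}
A direct check shows that the marginals of $\eta$ are $\mu\rmP$ and $\nu\rmP$. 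Using the hypothesis \eqref{eq:varphi_contraction}, I would compute
\begin{equation*}
\int \dist(x',y')^p\, \eta(\rmd x',\rmd y') \ = \ \int \!\int \dist(x',y')^p\, \zeta_{x,y}(\rmd x',\rmd y')\, \zeta(\rmd x,\rmd y) \ \le \ \alpha^p \int \dist(x,y)^p\, \zeta(\rmd x,\rmd y),
\end{equation*}
and then take the infimum over $\zeta \in \coupling{\mu,\nu}$ to conclude that $\wassersteinLigne[p,\dist]{\mu \rmP , \nu \rmP}\leq \alpha\, \wassersteinLigne[p,\dist]{\mu , \nu}$.

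The main obstacle is the measurability of $(x,y) \mapsto \zeta_{x,y}$, which is needed to make the gluing construction well-defined. I would handle this by invoking a standard measurable selection theorem for optimal transport plans: since $\dist$ is lower semicontinuous and the map $(x,y) \mapsto (\delta_x \rmP, \delta_y \rmP)$ is measurable into $\mcp(\mathbb{R}^d) \times \mcp(\mathbb{R}^d)$ endowed with the weak topology, one can select a measurable family of couplings $(x,y) \mapsto \zeta_{x,y}$ that are optimal, or, to bypass the optimality issue entirely, one can work with $\varepsilon$-optimal couplings and use a partition argument together with the regular conditional probability of $\rmP(x,\cdot)$ to construct $\zeta_{x,y}$ measurably, then let $\varepsilon \downarrow 0$.

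Finally, the induction step is straightforward: assuming the claim for $n$, the base case applied with $\mu\rmP^n, \nu\rmP^n$ in place of $\mu,\nu$ gives
\begin{equation*}
\wassersteinLigne[p,\dist]{\mu \rmP^{n+1} , \nu \rmP^{n+1}} \ \le \ \alpha\, \wassersteinLigne[p,\dist]{\mu \rmP^n , \nu \rmP^n} \ \le \ \alpha^{n+1}\, \wassersteinLigne[p,\dist]{\mu , \nu},
\end{equation*}
which completes the proof.
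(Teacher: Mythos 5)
Your proof is correct and follows essentially the same route as the paper: the paper's auxiliary Lemma \ref{lem:marginalize_asympt_bias_altern} performs precisely the gluing construction you describe, handling the measurability issue by citing \cite[Corollary 5.22]{VillaniTransport} for a measurable selection of optimal couplings (the same fact you invoke), and then Lemma \ref{lem:bound_n_Q} follows by the same induction.
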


\Cref{lem:bound_n_Q} is an immediate consequence of the following lemma.

\begin{lemma}
  \label{lem:marginalize_asympt_bias_altern}
Let $\msx$  be a Polish space with Borel $\sigma$-field $\mcx$, 
$p\in [1,\plusinfty )$, and suppose that $\dist : \msx^2 \to \rset_+$ is a lower semicontinuous distance function on $\msx$.  Consider two Markov kernels $\rmQ_1$ and $\rmQ_2$ on $\msx\times \mcx$, and suppose that there exists a measurable function  $\Psi : \msx \times \msx \to \rset_+$ such that for any $x,y \in \msx$,
  \begin{equation}
    \label{eq:lem:marginalize_asympt_bias_altern}
    \wassersteinLigne[p,\dist]{\updelta_x \rmQ_1, \updelta_y \rmQ_2}\leq \Psi(x,y) \eqsp.
  \end{equation}
Then for any $\mu_1,\mu_2 \in\Pens_{p,\dist}(\msx) $,  and for any coupling  $\zeta \in \Gamma(\mu_1,\mu_2)$,
  \begin{equation}\label{conclusion20}
          \wasserstein[p,\dist]{\mu_1 \rmQ_1,  \mu_2 \rmQ_2}\leq \left\{ \int_{\msx\times \msx} \Psi(x,y)^p \,  \zeta (\rmd (x,y))\right\}^{1/p}
          \eqsp. 
  \end{equation}
In particular if $\Psi = \alpha\, \dist$ for $ \alpha \geq 0$, then $          \wasserstein[p,\dist]{\mu_1 \rmQ_1,  \mu_2 \rmQ_2} \leq \alpha \wasserstein[p,\dist]{\mu_1,  \mu_2}$.
\end{lemma}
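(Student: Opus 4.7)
The plan is to apply the standard gluing construction. The idea is to produce a coupling $\tilde{\zeta}$ of $\mu_1 \rmQ_1$ and $\mu_2 \rmQ_2$ by integrating, against the given coupling $\zeta$, a family of couplings of $\updelta_x \rmQ_1$ and $\updelta_y \rmQ_2$ indexed by $(x,y)$; the bound \eqref{conclusion20} will then fall out of Tonelli's theorem once each fiber coupling realizes the pointwise estimate \eqref{eq:lem:marginalize_asympt_bias_altern} up to a small error.

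First I would invoke a measurable selection theorem: for every $\varepsilon > 0$, there exists a Markov kernel $K^{\varepsilon}$ on $(\msx \times \msx, \mcb(\msx \times \msx))$ such that the fiber $K^{\varepsilon}_{x,y} := K^{\varepsilon}((x,y), \cdot) \in \Gamma(\updelta_x \rmQ_1, \updelta_y \rmQ_2)$ for every $(x,y)$ and
\begin{equation*}
\int_{\msx \times \msx} \dist(u,v)^p \, K^{\varepsilon}_{x,y}(\rmd u, \rmd v) \leq \Psi(x,y)^p + \varepsilon.
\end{equation*}
This uses that $\msx$ is Polish and $\dist$ is lower semicontinuous, so that the set-valued map $(x,y) \mapsto \Gamma(\updelta_x \rmQ_1, \updelta_y \rmQ_2)$ admits a measurable $\varepsilon$-optimal selection; see e.g.\ Villani, \emph{Optimal Transport: Old and New}, Corollary~5.22. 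I expect this to be the main obstacle, since everything else is routine bookkeeping.

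Next I would define
\begin{equation*}
\tilde{\zeta}^{\varepsilon}(A) = \int_{\msx \times \msx} K^{\varepsilon}_{x,y}(A) \, \zeta(\rmd (x,y)), \qquad A \in \mcb(\msx \times \msx),
\end{equation*}
and verify that $\tilde{\zeta}^{\varepsilon} \in \Gamma(\mu_1 \rmQ_1, \mu_2 \rmQ_2)$. This is a direct computation: using that $\zeta$ has first marginal $\mu_1$ and that $K^{\varepsilon}_{x,y}$ has first marginal $\updelta_x \rmQ_1$, one obtains $\tilde{\zeta}^{\varepsilon}(B \times \msx) = \int (\updelta_x \rmQ_1)(B) \zeta(\rmd (x,y)) = (\mu_1 \rmQ_1)(B)$ for every $B \in \mcb(\msx)$; the second marginal follows symmetrically. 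Then Tonelli's theorem combined with the $\varepsilon$-selection bound yields
\begin{equation*}
\wassersteinLigne[p,\dist]{\mu_1 \rmQ_1, \mu_2 \rmQ_2}^p \leq \int \dist^p \, \rmd \tilde{\zeta}^{\varepsilon} \leq \int_{\msx \times \msx} \Psi(x,y)^p \, \zeta(\rmd (x,y)) + \varepsilon,
\end{equation*}
and letting $\varepsilon \to 0$ gives \eqref{conclusion20}.

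For the final assertion, with $\Psi = \alpha \dist$, the right-hand side of \eqref{conclusion20} becomes $\alpha \{\int \dist(x,y)^p \, \zeta(\rmd (x,y))\}^{1/p}$; taking the infimum over $\zeta \in \Gamma(\mu_1, \mu_2)$ on both sides then yields $\wassersteinLigne[p,\dist]{\mu_1 \rmQ_1, \mu_2 \rmQ_2} \leq \alpha \, \wassersteinLigne[p,\dist]{\mu_1, \mu_2}$, as desired.
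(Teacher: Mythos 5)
Your proposal is correct and follows essentially the same route as the paper: glue a measurable family of couplings of the fibers $\updelta_x\rmQ_1$ and $\updelta_y\rmQ_2$ against $\zeta$, check marginals, and apply Fubini/Tonelli, with Villani's Corollary 5.22 supplying the measurable selection. The one small inefficiency is your detour through $\varepsilon$-optimal selections followed by a limit $\varepsilon\to 0$: the very corollary you cite already produces a measurable kernel $K$ whose fibers are \emph{exactly} optimal couplings, so one may take $K$ directly, apply Fubini to see that $\zeta K$ is a coupling of $\mu_1\rmQ_1$ and $\mu_2\rmQ_2$, and conclude without any approximation argument, as the paper does.
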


\begin{proof}
Let $\mu_1,\mu_2 \in \Pens_{p,\dist}(\msx)$ and $\zeta \in \Gamma(\mu_1,\mu_2)$.
  By \cite[Corollary 5.22]{VillaniTransport}, there exists a Markov kernel $K$ on $(\msx \times \msx) \times (\mcx\otimes \mcx)$ such that for any $x,y \in\msx$, the probability measure
  $K((x,y),d(w,z))$ is an optimal coupling of $ \rmQ_1(x,dw)$ and $\rmQ_2(x,dz)$, i.e., 
  $      \wassersteinLigne[p,\dist]{\updelta_x \rmQ_1, \updelta_y \rmQ_2} = \left\{\int_{\msx\times \msx} \dist(w,z)^p K((x,y),\rmd(w,z))\right\}^{1/p}$.
By Fubini's theorem, the probability measure  $\zeta K$ is a coupling of $\mu_1\rmQ_1$ and $ \mu_2 \rmQ_2$. Therefore, by definition of $\wassersteinD[p,\dist]$, Fubini's theorem, and \eqref{eq:lem:marginalize_asympt_bias_altern},
  \begin{eqnarray*}
              \wasserstein[p,\dist]{\mu_1\rmQ_1,\mu_2 \rmQ_2}&\leq &  \left\{\int_{\msx \times \msx} \int_{\msx\times \msx} \dist(w,z)^p\,    K((x,y),\rmd(w,z)) \,\zeta (\rmd(x,y))\right\}^{1/p}\\
              &  \leq &  \left\{\int_{\msx \times \msx} \Psi(x,y)^p \,  \zeta (\rmd(x,y))\right\}^{1/p} \eqsp,
            \end{eqnarray*}
The last statement follows by taking the infimum over $\zeta \in \Gamma(\mu_1,\mu_2)$.
\end{proof}

\begin{proof}[Proof of \Cref{lem:bound_n_Q}]
Applying \Cref{lem:marginalize_asympt_bias_altern} with $\rmQ_1 =\rmQ_2
=\rmP$, $\mu_1 =\mu$ and $\mu_2 = \nu$ yields $   \wassersteinLigne[p,\dist]{\mu{\rmP} , \nu \rmP} \leq
\alpha\wassersteinLigne[p,\dist]{ \mu ,\nu}$. The claim then follows by induction. 
\end{proof}

\subsection{Proofs of \Cref{theo:bound_w_2_euler} and \Cref{theo:bias_final_ula_1}}
\label{sec:proof-ula_One}
We consider a synchronous coupling between the diffusion process
\eqref{eq:sde} and its discretization \eqref{eq:def_euler_maru}.
Let $W_0$ be an $\mathbb R^d$-valued random variable with $\expeLigne{\abs[2]{W_0}} < \plusinfty$ that is independent of the $d$-dimensional Brownian motion $(B_t)_{t \geq 0}$. We define processes $(Y_t)_{t \geq 0}$ and $(\bY_t)_{t \geq 0}$ by $Y_0 = \bY_0 = W_0$,
\begin{equation}
  \label{eq:def_synchronuous}
  \begin{aligned}
     Y_t &= Y_0 + \int_{0}^t b(Y_s) \rmd s  + \sqrt{2} B_t \eqsp,\\
    \bY_{t}& = \bY_0 + \int_{0}^t \tbg(\bY_{\floor{s/\gamma} \gamma}) \rmd s  + \sqrt{2} B_t \eqsp.
  \end{aligned}
\end{equation}
Then $(Y_t)_{t\ge 0}$ is the unique strong solution of the SDE \eqref{eq:sde}
with initial condition $W_0$, and $(\bY_t)_{t \geq 0}$ is the linear interpolation
of the Euler-Maruyama type discretization in the sense that for every $n\in\mathbb N$, $X_n=bY_{n \gamma} $ satisfies the recursion \eqref{eq:def_euler_maru} with independent standard normal random 
variables $G_k$ given by $ G_{k+1} =( B_{(k+1)\gamma} - B_{k\gamma})/\sqrt\gamma$. In particular, for any $n\in\nset$ and $\gamma \in \ocint{0,\bgamma}$, $(Y_{n\gamma},
\bY_{n \gamma})$ is a coupling of the probability measures 
$\nu \rmP_{n \gamma}$ and $\nu \rmR_{\gamma}^n$, where $\nu$ is the law
of the initial value $W_0$ and therefore
\begin{equation}
  \label{eq:bound_diff_wasser_coupling}
  \wasserstein[2]{\nu \rmP_{n \gamma} , \nu \rmR_{\gamma}^n } \
  \leq  \PE^{1/2}\parentheseDeux{\abs[2]{Y_{n\gamma} - \bY_{n\gamma}}}\eqsp.
\end{equation}
Finally, note that if $\nu = \pi$, then by \Cref{assum:diffusion_semigroup_a}, $(Y_t)_{t\ge 0}$
is a stationary process and for any $t \geq 0$, $Y_t$ has distribution $\pi$.



\begin{proof}[Proof of  \Cref{theo:bound_w_2_euler}]
We apply \eqref{eq:bound_diff_wasser_coupling} with $\nu =\pi$.
By \Cref{lem:bound_w_2_euler} below, and a straightforward induction, we obtain that for any $\gamma \in \ocint{0,\bgamma}$ and $n \in \nset$,
  \begin{align}
&    \wasserstein[2]{\pi \rmP_{n\gamma}, \pi\rmR_{\gamma}^n}[2] \leq \expeE{\abs[2]{ Y_{n \gamma}- \overline{Y}_{n \gamma}}} \\
    &  \qquad  \leq \gamma^3  \parentheseDeux{    (2+9\gamma )M_1/6 + 3 M_2 + (2+3\gamma) M_3} \sum_{k=1}^n (1+ 2\lambda_{\Langevin}\gamma )^{n-k}  \\
    & \qquad  \leq  \frac{\gamma^2}{2\lambda_{\Langevin}}  \parentheseDeux{    (2+9\gamma )M_1/6 + 3 M_2 + (2+3\gamma) M_3}     (1+ 2\lambda_{\Langevin}\gamma )^{n} \eqsp. 
  \end{align}
  The proof is concluded using that for any $t \geq 0$, $1+t \leq \rme^{t}$ and $\lambda_{\Langevin} \geq 1$. 
\end{proof}

\begin{lemma}
  \label{lem:bound_w_2_euler}
  Assume \Cref{assum:diffusion_semigroup_a}, \Cref{as:tilde_b_b}, \Cref{as:r_gamma} and \Cref{as:b_lip}.
  Then for any $\gamma \in \ocint{0,\bgamma}$ and  $n \in \nset$,
\begin{multline}
\label{eq:lem:bound_w_2_euler_res}
  \expeE{\abs[2]{ Y_{(n+1) \gamma}- \overline{Y}_{(n+1) \gamma}}}  \leq  (1+2 \lambda_{\gamma}\gamma) \expeE{\abs[2]{ Y_{n \gamma}- \overline{Y}_{n \gamma}}}\\ + \gamma^3 \parentheseDeux{   (2+9\gamma )M_1/6 + 3 M_2 + (2+3\gamma) M_3} \eqsp,
\end{multline}
where
\begin{equation}
  \label{eq:5}
\lambda_{\gamma} =   1+L^2+3 \gamma L^2/2 \eqsp,
\end{equation}
 $(Y_t,\bY_t)_{t \geq 0}$ is defined by \eqref{eq:def_synchronuous} with $W_0$ distributed according to $\pi$, and $M_1,M_2,M_3$ are given by \eqref{eq:def_m_1_m_2_m_3}. 
\end{lemma}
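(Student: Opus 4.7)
The plan is to work directly with the synchronous coupling $(Y_t,\bar Y_t)_{t\ge 0}$ defined in \eqref{eq:def_synchronuous} with $W_0\sim\pi$, so that $Y_{n\gamma}\sim\pi$ for every $n$, while $\bar Y_{n\gamma}\sim\pi \rmR_\gamma^n$. Writing $e_t=Y_t-\bar Y_t$, the driving Brownian parts cancel and the increment over the single step $[n\gamma,(n+1)\gamma]$ becomes
\begin{equation*}
e_{(n+1)\gamma}-e_{n\gamma}\ =\ \int_{n\gamma}^{(n+1)\gamma}\!\bigl[b(Y_s)-\tbg(\bar Y_{n\gamma})\bigr]\,\rmd s.
\end{equation*}
Applying It\^o's formula to $b(Y_s)$ on $[n\gamma,s]$ and using a Fubini/stochastic-Fubini argument, I would rewrite $\int_{n\gamma}^{(n+1)\gamma}b(Y_s)\,\rmd s=\gamma b(Y_{n\gamma})+J_1+J_2$, where
\begin{equation*}
J_1=\int_{n\gamma}^{(n+1)\gamma}\!\bigl((n+1)\gamma-r\bigr)\,\generatorL b(Y_r)\,\rmd r,\qquad
J_2=\sqrt2\int_{n\gamma}^{(n+1)\gamma}\!\bigl((n+1)\gamma-r\bigr)\,\rmD b(Y_r)\,\rmd B_r.
\end{equation*}

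The crucial algebraic move is then to rewrite $b(Y_{n\gamma})-\tbg(\bar Y_{n\gamma})=\bigl[\tbg(Y_{n\gamma})-\tbg(\bar Y_{n\gamma})\bigr]+\bigl[b(Y_{n\gamma})-\tbg(Y_{n\gamma})\bigr]$, obtaining the decomposition
\begin{equation*}
e_{(n+1)\gamma}-e_{n\gamma}\ =\ A_1+A_2+J_1+J_2,\quad
A_1=\gamma\bigl[\tbg(Y_{n\gamma})-\tbg(\bar Y_{n\gamma})\bigr],\
A_2=\gamma\bigl[b(Y_{n\gamma})-\tbg(Y_{n\gamma})\bigr].
\end{equation*}
The point of the swap is that the bias term $A_2$ is evaluated at $Y_{n\gamma}\sim\pi$ rather than at $\bar Y_{n\gamma}$, which is what allows the constant $M_3=\int\bfGamma^2\,\rmd\pi$ (rather than some quantity depending on $\pi\rmR_\gamma^n$) to appear in the final bound. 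Using \Cref{as:b_lip} for $A_1$, \Cref{as:tilde_b_b} for $A_2$, Cauchy–Schwarz in time for $J_1$, It\^o's isometry for $J_2$, and the stationarity of $\pi$ under $(\rmP_t)_{t\ge0}$ to pass expectations through to $M_1$ and $M_2$, I obtain the four bounds $|A_1|\le \gamma L|e_{n\gamma}|$, $\PE|A_2|^2\le \gamma^4 M_3$, $\PE|J_1|^2\le \gamma^4 M_1/3$ and $\PE|J_2|^2\le 2\gamma^3 M_2/3$.

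To close the recursion I expand
\begin{equation*}
\PE\bigl[|e_{(n+1)\gamma}|^2\bigr]=\PE\bigl[|e_{n\gamma}|^2\bigr]+2\PE\bigl[\langle e_{n\gamma},A_1+A_2+J_1\rangle\bigr]+\PE\bigl[|A_1+A_2+J_1+J_2|^2\bigr],
\end{equation*}
where the cross term with $J_2$ vanishes because $J_2$ is a martingale increment and $e_{n\gamma}$ is $\filtration_{n\gamma}$-measurable. For the inner-product terms I apply Young's inequality $2\langle a,b\rangle\le \alpha|a|^2+\alpha^{-1}|b|^2$ with $\alpha$ of order $\gamma$, which converts each bias bound of order $\gamma^4$ into a contribution of order $\gamma^3$ to the constant term and an $O(\gamma)$ contribution to the $\PE|e_{n\gamma}|^2$ coefficient; the $A_1$ piece, already proportional to $|e_{n\gamma}|$, yields an extra $\gamma L^2$ factor on $\PE|e_{n\gamma}|^2$ (via Young with $\alpha=\gamma$). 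For $\PE|A_1+A_2+J_1+J_2|^2$ I use convexity-type bounds $|x+y+z|^2\le 3(|x|^2+|y|^2+|z|^2)$, together with the vanishing of the cross terms between $J_2$ and the $\filtration_{n\gamma}$-measurable pieces, to pick up the remaining $\gamma^2 L^2$ term on $\PE|e_{n\gamma}|^2$ and the $\gamma^4$-order contributions to $M_1$ and $M_3$.

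The main obstacle is bookkeeping: the constants in $\lambda_{\Langevin}$ and in the prefactor of $M_1,M_2,M_3$ must match exactly, and this requires a careful choice of Young's parameters (of the form $\alpha_i\asymp \gamma$) and a clean handling of the $\langle J_1,J_2\rangle$ cross term in $\PE|\cdot|^2$. Once this is done, assembling all pieces yields the claimed one-step recursion \eqref{eq:lem:bound_w_2_euler_res}; the proposition then follows by iterating the recursion and using $1+t\le e^t$ with $\lambda_{\Langevin}\ge 1$, exactly as indicated in the already-displayed induction step of the proof of \Cref{theo:bound_w_2_euler}.
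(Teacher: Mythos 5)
Your proposal follows essentially the same route as the paper's own proof: the same synchronous coupling, the same It\^o expansion of $b(Y_s)$, the same crucial re-centering $b(Y_{n\gamma})-\tbg(\bY_{n\gamma}) = [\tbg(Y_{n\gamma})-\tbg(\bY_{n\gamma})] + [b(Y_{n\gamma})-\tbg(Y_{n\gamma})]$ so that the bias and the generator terms are evaluated under the stationary law $\pi$, the vanishing of the cross term with the martingale increment, and a Young/convexity bookkeeping to close the one-step recursion. The only structural difference is that you apply (stochastic) Fubini first, turning the nested double integrals of the paper into the single weighted integrals $J_1,J_2$, and that you apply Young to $A_1$, $A_2$, $J_1$ separately rather than grouped. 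This is a valid reorganization, but it will not reproduce the paper's constants unless you adjust three points: for $\PE|J_1|^2$ use Minkowski's integral inequality (which gives $\gamma^4 M_1/4$, exactly as in the paper) rather than the Cauchy--Schwarz bound $\gamma^4 M_1/3$; group $A_1+A_2$ before applying Young (so that a single $\gamma$ appears in the $\PE|e_{n\gamma}|^2$ coefficient and the factor $2$ multiplies both the $L^2$ and $M_3$ contributions); and, for the quadratic term $\PE|A_1+A_2+J_1+J_2|^2$, handle the $\ps{J_1}{J_2}$ cross term exactly as the paper does — extract $A_1,A_2$ with $|a+b+c|^2\le 3(|a|^2+|b|^2+|c|^2)$ and then pair $J_1+J_2$ together via $|x+y|^2\le 2(|x|^2+|y|^2)$, rather than leaving it dangling.

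One point worth flagging: your $J_2$ correctly carries the factor $\sqrt 2$ from It\^o's formula applied to the diffusion $\rmd Y_t = b(Y_t)\rmd t + \sqrt 2\,\rmd B_t$, whereas the paper's displayed identity \eqref{eq:lem:bound_w_2_euler_a_0_1} and the subsequent It\^o isometry step omit this $\sqrt 2$. As written, your (correct) accounting yields $\PE|J_2|^2\le \tfrac{2}{3}\gamma^3 M_2$, and after the factor $6$ from the quadratic term this gives a coefficient of $4\gamma^3$ in front of $M_2$ rather than the $3\gamma^3$ stated in the lemma. This is not a flaw in your approach — the discrepancy originates in the paper's It\^o step — but it means you should be explicit about which convention you use, and that the "bookkeeping" you defer cannot recover the stated constant $3M_2$ if the $\sqrt2$ is kept.
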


\begin{proof}
For any $k \in \nset$, define $Z_k\eqdef Y_{k \gamma}- \overline{Y}_{k \gamma}$ and let $n\in\mathbb N$, $\gamma \in\ocint{0,\bgamma}$. Then by \eqref{eq:def_synchronuous} and using the decomposition $b(Y_s) -\tbg(\bY_{n\gamma})  = b(Y_s) - b(Y_{n \gamma} +b(Y_{n \gamma} )-\tbg(\bY_{n\gamma})  $, we get 
\begin{eqnarray}
\label{eq:lem:bound_w_2_euler_a_0}
 \lefteqn{ \expeE{\abs[2]{Z_{n+1}}}\  =    \expeE{\abs[2]{Z_n}} +
  \int_{n \gamma}^{(n+1) \gamma} 2 \expeE{\ps{Z_n}{ {b(Y_s) - b(Y_{n \gamma} ) } }}\; \rmd s } \\
  \nonumber
  & &+2\gamma\,  \expeE{\ps{Z_n}{ { b(Y_{n \gamma} )-\tbg(\bY_{n\gamma}) } }} +\expeE{\abs[2]{\int_{n \gamma}^{(n+1) \gamma} \defEns{b(Y_s) - \tbg(\bY_{n\gamma})  } \rmd s}}.
\end{eqnarray}
We now bound the terms on the right hand side.
First, by It\=o's formula, for any $s \ge n \gamma$,
\begin{equation}
  \label{eq:lem:bound_w_2_euler_a_0_1}
  b(Y_s) - b(Y_{n\gamma}) = \int_{n\gamma}^s \generatorL b(Y_u) \rmd u +  \int_{n\gamma}^{s} \langle\nabla b(Y_u) , \rmd B_u\rangle  \eqsp.
\end{equation}
Denote by $(\mcf_t^{B})_{t \geq 0}$ the filtration associated with $(B_t)_{t \geq 0}$. 
Since $M_2 < \plusinfty$, the process $(\int_{0}^{s}\langle \nabla b(Y_u), \rmd B_u\rangle)_{s \ge 0}$ is a $(\mathcal{F}_t^B)_{t \geq 0}$-martingale. Using that $(Z_t)_{t \geq 0}$ is $(\mcf_{t }^B)_{t \geq 0}$-adapted and for any $t \geq 0$, $Y_t$ has distribution $\pi$, and we get by the Cauchy-Schwarz inequality 
\begin{align}
\nonumber  
 2 {  \expeE{\ps{Z_n}{{b(Y_s) - b(Y_{n\gamma}) }  }}} &=
2 {  \expeE{\ps{Z_n}{  \int_{n\gamma}^s \generatorL b(Y_u) \rmd u}}} \\
  \qquad \qquad & \leq    \expeE{\abs[2]{Z_n}} +  \expeE{\abs[2]{\int_{n\gamma}^s \generatorL b(Y_u) \rmd u}}\\
  \qquad \qquad  &  \le   \expeE{\abs[2]{Z_n}} +  (s-n\gamma )^2M_1 \eqsp. \nonumber
\end{align}
Therefore, we get 
\begin{equation} \label{boundterm1} 
 \int_{n \gamma}^{(n+1) \gamma} 2 \expeE{\ps{Z_n}{ {b(Y_s) - b(Y_{n \gamma} ) } }}\; \rmd s \  \le {\gamma}\, \expeE{\abs[2]{Z_n}} + \gamma ^3M_1/3\, .
\end{equation}
Furthermore, using the decomposition $b(Y_{n \gamma} )-\tbg(\bY_{n\gamma})  =  b(Y_{n \gamma}) - \tbg(Y_{n \gamma}) +\tbg(Y_{n\gamma}) - \tbg(\overline{Y}_{n \gamma})$, as well as \Cref{as:tilde_b_b} and \Cref{as:b_lip}, we have
\begin{align}\nonumber
2 \expeE{\ps{Z_n}{ { b(Y_{n \gamma} )-\tbg(\bY_{n\gamma}) } }}  &\le 
  \expeE{\abs[2]{Z_n}} +  \expeE{\abs[2]{b(Y_{n \gamma} )-\tbg(\bY_{n\gamma})}}
  \\ 
&\qquad  \le    \expeE{\abs[2]{Z_n}} + 2\gamma^2M_3+2L^2    \expeE{ \abs[2]{Z_n }}  \eqsp, \label{boundterm2}
\end{align}
where $M_3$ is defined in \eqref{eq:def_m_1_m_2_m_3}.
Using $b(Y_s) - \tbg(\bY_{n\gamma}) = b(Y_s) - b(Y_{n\gamma}) + b(Y_{n \gamma}) - \tbg(Y_{n \gamma}) +\tbg(Y_{n\gamma}) - \tbg(\overline{Y}_{n \gamma})$ and \eqref{eq:lem:bound_w_2_euler_a_0_1}, we get that 
\begin{eqnarray}
\nonumber
\lefteqn{ \expeE{\abs[2]{\int_{n \gamma}^{(n+1) \gamma} \defEns{b(Y_s) - \tbg(\bY_{n\gamma}) } \rmd s}}  \leq 
3   \expeE{\abs[2]{\int_{n \gamma}^{(n+1) \gamma} \defEns{b(Y_s) - b(Y_{n\gamma}) } \rmd s}}}
  \\
 \nonumber  
  & &\qquad \quad  +   3  \gamma^2 \expeE{\abs[2]{b(Y_{n\gamma})- \tbg(Y_{n\gamma}) }}
    + 3  \gamma^2 \expeE{ \abs[2]{\tbg(Y_{n\gamma}) - \tbg(\overline{Y}_{n\gamma}) }}
  \\
  \nonumber
  &  \leq &
6   \expeE{\abs[2]{\int_{n \gamma}^{(n+1) \gamma} \int_{n\gamma}^s \generatorL b(Y_u) \rmd u \rmd s}+\abs[2]{\int_{n \gamma}^{(n+1) \gamma}  \int_{n\gamma}^{s} \langle\nabla b(Y_u), \rmd B_u\rangle \rmd s}} 
  \\   
  & &\qquad \quad  +   3  \gamma^4M_3 
+ 3  \gamma^2L^2    \expeE{ \abs[2]{Z_n }}\eqsp .  \label{eq:lem:bound_w_2_euler_a}
\end{eqnarray}
Using the Cauchy-Schwarz inequality and that for any $t \geq 0$, $Y_t$ has distribution $\pi$, we have
\begin{equation}\label{eq:lem:bound_w_2_euler_b}
 \expeE{\abs[2]{\int_{n \gamma}^{(n+1) \gamma} \int_{n\gamma}^s
  \generatorL b(Y_u) \rmd u \rmd s}}  \le \defEns{\int_{n \gamma}^{(n+1) \gamma} \int_{n\gamma}^s  \rmd u \rmd s}^2M_1 = \frac{\gamma^4}{4}M_1.
\end{equation}
%
Similarly, using  the Cauchy-Schwarz inequality and It\=o's isometry, we obtain
\begin{eqnarray}\nonumber
\lefteqn{\expeE{\abs[2]{\int_{n \gamma}^{(n+1) \gamma}  \int_{n\gamma}^{s} \langle\nabla b(Y_u), \rmd B_u\rangle \rmd s}}}\\ & \leq &  \gamma     {\int_{n \gamma}^{(n+1) \gamma} \int_{n\gamma}^{s} \expe{ \Tr\parenthese{ \nabla b(Y_u) \nabla b(Y_u)^{\transpose}}} \rmd u \rmd s} 
  \label{eq:lem:bound_w_2_euler_b_a}
=     \frac{\gamma^3}2 M_2 \eqsp .
\end{eqnarray}
%
The proof then follows from 
combining  
\eqref{boundterm2}, \eqref{eq:lem:bound_w_2_euler_a}, \eqref{eq:lem:bound_w_2_euler_b} and \eqref{eq:lem:bound_w_2_euler_b_a} in \eqref{eq:lem:bound_w_2_euler_a_0}.
\end{proof}

{\begin{remark}
An alternative way to arrive at bounds as in \Cref{lem:bound_w_2_euler}
is through stochastic interpolation formulae \cite{DelMoralSingh20,DelMoralSingh22}. These provide exact expressions
for the difference of two stochastic flows. In the simple scenario considered here, they seem to lead to similar bounds as above. However, the interpolation approach 
might be helpful in analyzing discretizations of stochastic differential equations with non-constant diffusion coefficients.
\end{remark}}

\begin{proof}[Proof of \Cref{theo:bias_final_ula_1}]
The result is a direct consequence of \Cref{theo:bound_w_2_euler}
and the inequalities in \eqref{eq:accuracyinvariantmeasure2} and \eqref{eq:accuracyinvariantmeasure3}.
\end{proof}



\subsection{Proofs of  \Cref{theo:bound_w_2_euler_altern} and \Cref{theo:bias_final_ula_2}}
\label{sec:proofs-crefth}  
Similarly as above, we consider  $(Y_t,\bY_t)_{t \geq 0}$  defined by  \eqref{eq:def_synchronuous}, but now with $W_0$ distributed according to $\pig$. Then since $\pig$ is invariant for $\rmR_{\gamma}$ by \Cref{as:r_gamma}, the process $(X_n=bY_{n\gamma})_{n \in\nset}$ is stationary  and for any $n \in\nset$ and $\gamma \in\ocint{0,\bgamma}$, $\bY_{n\gamma}$ has distribution  $\pig$.

\begin{proof}[Proof of \Cref{theo:bound_w_2_euler_altern}]
  By \Cref{lem:bound_w_2_euler_altern} below and since $Y_0 = \bY_0$, we have by a straightforward induction that for any $\gamma \in \ocint{0,\bgamma}$
and $n\in\mathbb N$,
$$
\expeE{\abs[2]{ Y_{n\gamma}- \overline{Y}_{n\gamma}}}\leq \  (1+2\kappa) \int_{0} ^{n\gamma}   \expeE{\abs[2]{ Y_{s}- \overline{Y}_{s}}} \rmd s+n\gamma^3 \tM_{\Langevin} \eqsp.$$
Therefore, we get for any $t \geq 0$, using \Cref{lem:bound_w_2_euler_altern} again,
\begin{equation}
  \label{eq:8}
  \expeE{\abs[2]{ Y_{t}- \overline{Y}_{t}}}\leq \  (1+2\kappa) \int_{0} ^{t}   \expeE{\abs[2]{ Y_{s}- \overline{Y}_{s}}} \rmd s+(t\gamma^2+\gamma^3)\ \tM_{\Langevin} \eqsp.
\end{equation}
  By Grönwall's inequality, and since $s \leq \rme^s$, we obtain that for any $t \geq 0$,
  \begin{equation}
    \expeE{\abs[2]{ Y_{t}- \overline{Y}_{t}}}\leq  \rme^{1+2\kappa t} \gamma^2(t +\gamma)\tM_{\Langevin} \leq \rme^{2(1+\kappa)t} \gamma^2(1+\gamma)\tM_{\Langevin}  \eqsp.
  \end{equation}
The proof is then completed using \eqref{eq:bound_diff_wasser_coupling}.  
\end{proof}

We preface the proof of \Cref{lem:bound_w_2_euler_altern} by a technical result.
\begin{lemma}
  \label{bound_moment_bY}
  Assume \Cref{as:r_gamma} and let $f :\rset^d\times\rset^d \to \rset_+$ be a measurable function. Let  $(\bY_t)_{t \geq 0}$ be defined by \eqref{eq:def_synchronuous} with $W_0$ distributed according to $\pig$. Then for any $n \in \nset$, $\gamma \in \ocint{0,\bgamma}$ and  $u \in \ccint{0,\gamma}$,
  \begin{equation}
    \expe{f(\bY_{n\gamma},\bY_{n\gamma +u})} = \int_{\rset^d}\int_{ \rset^d}  {  f(x,x+u \tbg(x) + (2u)^{1/2}z) }\;  \varphibf_d(z)\, \rmd z\; \pig  (\rmd x)  \eqsp .
  \end{equation}
\end{lemma}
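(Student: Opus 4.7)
The plan is to exploit the piecewise structure of the interpolated Euler--Maruyama process together with the assumed invariance of $\pig$ under $\rmR_{\gamma}$. First, I would note that on the interval $[n\gamma, (n+1)\gamma]$, the drift in the second line of \eqref{eq:def_synchronuous} is evaluated at the constant argument $\bY_{n\gamma}$ (since $\floor{s/\gamma}=n$ there), yielding the explicit representation
\[
\bY_{n\gamma+u} \;=\; \bY_{n\gamma} \,+\, u\,\tbg(\bY_{n\gamma}) \,+\, \sqrt{2}\,(B_{n\gamma+u} - B_{n\gamma})
\]
for every $u \in [0,\gamma]$.

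Next, I would invoke elementary properties of Brownian motion: $\bY_{n\gamma}$ is measurable with respect to $\sigma(W_0, (B_s)_{s \le n\gamma})$, while the increment $B_{n\gamma+u} - B_{n\gamma}$ is independent of this $\sigma$-field and distributed as $\mathcal{N}(0, u\, \Idd)$. Consequently, $\sqrt{2}\,(B_{n\gamma+u}-B_{n\gamma})$ has the same law as $\sqrt{2u}\,Z$ where $Z$ is a standard $d$-dimensional Gaussian vector independent of $\bY_{n\gamma}$, so that the pair $(\bY_{n\gamma}, \bY_{n\gamma+u})$ has the same distribution as $\big(\bY_{n\gamma},\, \bY_{n\gamma} + u\,\tbg(\bY_{n\gamma}) + \sqrt{2u}\,Z\big)$.

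Finally, since $W_0 \sim \pig$ and $\pig$ is invariant under $\rmR_{\gamma}$ by \Cref{as:r_gamma}, the chain $(\bY_{k\gamma})_{k\in\nset}$ is stationary, so $\bY_{n\gamma} \sim \pig$ for every $n \in \nset$. Applying Fubini's theorem and integrating first against the standard Gaussian density $\varphibf_d(z)\,\rmd z$ (the law of $Z$) and then against $\pig$ (the law of $\bY_{n\gamma}$) yields the stated identity. The argument is essentially a bookkeeping exercise with no substantive obstacle; the only minor point is to observe that the boundary case $u=\gamma$ is unproblematic since modifying $\tbg(\bY_{\lfloor s/\gamma\rfloor\gamma})$ at the single instant $s=(n+1)\gamma$ does not alter the Lebesgue integral defining the interpolation.
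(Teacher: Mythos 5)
Your argument is correct and follows essentially the same path as the paper's proof: derive the explicit one-step representation $\bY_{n\gamma+u} = \bY_{n\gamma} + u\,\tbg(\bY_{n\gamma}) + \sqrt{2}\,(B_{n\gamma+u}-B_{n\gamma})$, use measurability of $\bY_{n\gamma}$ together with independence of the Brownian increment to condition on $\bY_{n\gamma}$, and finish by noting that stationarity under $\rmR_\gamma$ gives $\bY_{n\gamma}\sim\pig$. Your version is if anything slightly more careful in spelling out the $\sigma$-field $\sigma(W_0,(B_s)_{s\le n\gamma})$ and the harmless endpoint issue at $u=\gamma$, but there is no material difference in approach.
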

\begin{proof}
  Let $n \in \nset$, $\gamma \in \ocint{0,\bgamma}$ and  $u \in \ccint{0,\gamma}$.
By definition,
  $\bY_{n\gamma +u} = \bY_{n\gamma} + u \tbg(\bY_{n\gamma}) + \sqrt 2 (B_{n\gamma +u}- B_{n
    \gamma})$. Then, since $\bY_{n\gamma}$ is $(\mathcal{F}^B_{t})_{t \leq n\gamma}$-measurable, where $(\mathcal{F}^B_{t})_{t \leq n\gamma}$ is the filtration generated by $(B_t)_{t \geq 0}$. By the Markov property of the Brownian
  motion, the increment  $B_{n\gamma +u}- B_{n
    \gamma}$ is independent of $\bY_{n\gamma}$ and therefore, we get
  \begin{equation}
    \expe{f(\bY_{n\gamma},\bY_{n\gamma +u}) | \bY_{n\gamma}} = \int_{ \rset^d}  {  f(\bY_{n\gamma},\bY_{n\gamma}+u \tbg(\bY_{n\gamma}) + (2u)^{1/2}z) }\;  \varphibf_d(z)\, \rmd z  \eqsp .
  \end{equation}  
  The proof is then completed using that  $ \bY_{n\gamma}$ has distribution $\pi_\gamma$.
\end{proof}

\begin{lemma}
  \label{lem:bound_w_2_euler_altern}
   Assume \Cref{assum:diffusion_semigroup_a}, \Cref{as:tilde_b_b}, \Cref{as:r_gamma} and \Cref{ass:one_side_lip_b}.  Let $(Y_t,\bY_t)_{t \geq 0}$ be defined by \eqref{eq:def_synchronuous} with $W_0$ distributed according to $\pig$.
  Then for any $\gamma \in \ocint{0,\bgamma}$, $n \in \nset$, $t \in \ccint{n \gamma, (n+1)\gamma}$,
\begin{eqnarray*}
\label{eq:lem:bound_w_2_euler_res_altern}
\expeE{\abs[2]{ Y_{t}- \overline{Y}_{t}}}& \leq &   \expeE{\abs[2]{ Y_{n \gamma}- \overline{Y}_{n \gamma}}} + (1+2\kappa) \int_{n \gamma} ^{t}   \expeE{\abs[2]{ Y_{s}- \overline{Y}_{s}}} \rmd s \\
&&   +\gamma^3\left( \frac 16  \tM_1    +\frac 12\gamma^{1/2}\tM_2^{1/2}\tM_3^{1/2} + \frac{1}{\sqrt 2}\tM_2^{1/2}\tM_4^{1/2}+ \frac 12\tM_5\right)\eqsp ,
\end{eqnarray*}
where  $\tM_i$, $i\in\{1,\ldots,5\}$, are defined in  \Cref{theo:bound_w_2_euler_altern}.
\end{lemma}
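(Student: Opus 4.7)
The approach is to work along the synchronous coupling $(Y_t,\bY_t)_{t\ge 0}$ defined by \eqref{eq:def_synchronuous} with $W_0\sim\pig$. Since both processes are driven by the same Brownian motion, the increment $Z_s := Y_s-\bY_s$ has purely deterministic drift $b(Y_s)-\tbg(\bY_{n\gamma})$ on $[n\gamma,(n+1)\gamma]$, so
\begin{equation*}
|Y_t-\bY_t|^2 = |Y_{n\gamma}-\bY_{n\gamma}|^2 + 2\int_{n\gamma}^t \ps{Z_s}{b(Y_s)-\tbg(\bY_{n\gamma})}\,\rmd s.
\end{equation*}
The integrand is decomposed as $b(Y_s)-\tbg(\bY_{n\gamma}) = [b(Y_s)-b(\bY_s)] + [b(\bY_s)-b(\bY_{n\gamma})] + [b(\bY_{n\gamma})-\tbg(\bY_{n\gamma})]$. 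By \Cref{ass:one_side_lip_b}, the first piece contributes at most $2\kappa|Z_s|^2$ to the integrand, accounting for the $2\kappa$ part of the Grönwall term. By \Cref{as:tilde_b_b}, the third piece is controlled in norm by $\gamma\bfGamma(\bY_{n\gamma})$, and Young's inequality combined with $\bY_{n\gamma}\sim\pig$ yields a contribution of $\tfrac{1}{2}\gamma^3\tM_5$ plus an extra $\int|Z_s|^2\,\rmd s$ term that also feeds into the Grönwall coefficient.

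The middle piece is treated by applying Itô's formula to $b(\bY_s)$ along the Euler dynamics $\rmd\bY_s=\tbg(\bY_{n\gamma})\,\rmd s+\sqrt 2\,\rmd B_s$, giving
\begin{equation*}
b(\bY_s)-b(\bY_{n\gamma}) = \int_{n\gamma}^s(\generatorD b)(\bY_{n\gamma},\bY_u)\,\rmd u + \int_{n\gamma}^s[\nabla b(\bY_u)]^{\transpose}[\tbg-b](\bY_{n\gamma})\,\rmd u + \sqrt 2\int_{n\gamma}^s\nabla b(\bY_u)\,\rmd B_u.
\end{equation*}
Pairing the first integrand with $Z_s$ via Young's and Cauchy--Schwarz, and using \Cref{bound_moment_bY} to compute $\mathbb{E}[|(\generatorD b)(\bY_{n\gamma},\bY_u)|^2]$, produces the $\tfrac{1}{6}\gamma^3\tM_1$ contribution. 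The middle integrand, of higher order in $\gamma$, is absorbed by Cauchy--Schwarz into the $\tM_2$ and $\tM_5$ terms.

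The main obstacle is to control the stochastic-integral contribution $2\int_{n\gamma}^t\ps{Z_s}{M_s}\,\rmd s$ with $M_s:=\sqrt 2\int_{n\gamma}^s\nabla b(\bY_u)\,\rmd B_u$, because $Z_s$ is not independent of $B$. The key trick is to split $Z_s = Z_{n\gamma}+(Y_s-Y_{n\gamma})-(\bY_s-\bY_{n\gamma})$: the summand $\ps{Z_{n\gamma}}{M_s}$ has zero expectation since $Z_{n\gamma}$ is $\mcf^B_{n\gamma}$-measurable and $M$ starts at $0$. Expanding $Y_s-Y_{n\gamma}$ by Itô along the diffusion \eqref{eq:sde} (bringing in $\generatorL b(Y_u)$ and $\nabla b(Y_u)$) and $\bY_s-\bY_{n\gamma}$ along the Euler dynamics, and applying Itô's isometry to the overlapping Brownian integrals, yields covariation terms of the form $\int\mathrm{Tr}[\nabla b(Y_u)\nabla b(\bY_u)^{\transpose}]\,\rmd u$ and $\int\ps{\generatorL b(Y_u)}{M_u}\,\rmd u$. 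Cauchy--Schwarz, together with $Y_u\sim\pig\rmP_u$ and $\bY_u\sim\pig$, bounds these by $\tM_2^{1/2}\tM_4^{1/2}$ and $\tM_2^{1/2}\tM_3^{1/2}$ respectively; careful integration in $s\in[n\gamma,t]$ and $u\in[n\gamma,s]$ produces the stated coefficients $\tfrac{1}{\sqrt 2}$ and $\tfrac 12\gamma^{1/2}$. Collecting the three contributions with the one-sided Lipschitz term gives the announced inequality.
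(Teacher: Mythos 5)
Your proof follows the paper's argument in all of its key steps: the synchronous coupling, the three-way decomposition of $b(Y_s)-\tbg(\bY_{n\gamma})$, the one-sided Lipschitz estimate for the first piece, the Young-plus-$\pig$ estimate for the $\bfGamma$ piece, and—most importantly—the central trick of killing the $\ps{Z_{n\gamma}}{M_s}$ and $\ps{b(Y_{n\gamma})-\tbg(\bY_{n\gamma})}{M_s}$ cross-terms against the martingale $M_s$ before bounding what remains. That is exactly the engine of the paper's proof.

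Two points of difference, neither a gap. First, you write the Itô expansion of $b(\bY_s)-b(\bY_{n\gamma})$ with the drift $\tbg(\bY_{n\gamma})$ rather than $b(\bY_{n\gamma})$, hence the extra $\int[\nabla b(\bY_u)]^{\transp}[\tbg-b](\bY_{n\gamma})\,\rmd u$ correction. The paper's display \eqref{eq:ito_b_bY} silently uses $b(\bY_{n\gamma})$ (i.e., it writes $\generatorD b$ rather than the Euler generator), so your version is in fact the more literally correct one; the correction is $O(\gamma)$ smaller and can be absorbed as you say. Second, for the residual cross-term $\mathrm{B}_{11}$, the paper applies Cauchy--Schwarz immediately to isolate $\bigl(\int_{n\gamma}^t\expe{|M_s|^2}\,\rmd s\bigr)^{1/2}$ (bounded via Itô isometry by $(\gamma^2\tM_2/2)^{1/2}$) from $\bigl(\int_{n\gamma}^t\expe{|\int_{n\gamma}^s\{b(Y_u)-b(Y_{n\gamma})\}\rmd u|^2}\rmd s\bigr)^{1/2}$, and only then decomposes the latter via Itô into a $\tM_3$ and a $\tM_4$ contribution. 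You propose instead to Itô-expand $b(Y_u)-b(Y_{n\gamma})$ first and then compute the cross-covariation directly, landing on $\int\Tr[\nabla b(Y_u)\nabla b(\bY_u)^{\transp}]\rmd u$ and a mixed Lebesgue-times-martingale term. Both orderings give bounds of the shape $\tM_2^{1/2}\tM_3^{1/2}$ and $\tM_2^{1/2}\tM_4^{1/2}$ after a further Cauchy--Schwarz; the paper's ordering is slightly cleaner because it never needs to track the joint law of $\nabla b(Y_u)$ and $\nabla b(\bY_u)$. One small imprecision in your writeup: the term $\int\ps{\generatorL b(Y_u)}{M_u}\rmd u$ is not a quadratic covariation (one factor is a Lebesgue integral), so ``Itô isometry'' does not literally apply there; it still needs a Cauchy--Schwarz argument, which is implicit in what you write. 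None of this affects the correctness of the route.
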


\begin{proof}
  Let $\gamma \in \ocint{0,\bgamma}$ and $n \in \nset$ and for any $t \geq 0$, $Z_t = Y_t - \bY_t$. By \eqref{eq:def_synchronuous}, almost surely it holds 
  \begin{equation}
    \label{eq:expre_theta_langevin}
\rmd  Z_t / \rmd t  = \ b(Y_t) - \tbg(\bY_{n\gamma})\qquad\text{for } t \in [n \gamma,(n+1)\gamma) \eqsp. 
  \end{equation}
  Therefore and by \Cref{ass:one_side_lip_b}, we have for any $t \in \ccint{n \gamma,(n+1)\gamma}$,
  \begin{align}
    \nonumber
    \abs[2]{Z_{t}} &= \abs[2]{Z_{n\gamma}} + 2\int_{n\gamma}^t \ps{Z_s}{b(Y_s)-\tbg(\bY_{n \gamma}) } \rmd s\\
        \nonumber
                         & = \abs[2]{Z_{n\gamma}} + 2\int_{n\gamma}^t \ps{Z_s}{b(Y_s)-b(\bY_{s}) + b(\bY_{s})-\tbg(\bY_{n \gamma}) } \rmd s \\
    \label{eq:0:lem:bound_w_2_euler_altern}
    & \leq \abs[2]{Z_{n\gamma}} + 2 \kappa\int_{n\gamma}^t \abs[2]{Z_s} \rmd s  + \mathrm{B}_1+ \mathrm{B}_2 \eqsp,\qquad\text{where}
  \end{align}
  \begin{equation}
    \label{eq:def_I_2}
\mathrm{B}_1   =  \int_{n\gamma}^t \ps{Z_s}{b(\bY_{s}) -b(\bY_{n\gamma})  } \rmd s, \  \mathrm{B}_2= \int_{n\gamma}^t \ps{Z_s}{ b(\bY_{n\gamma}) -\tbg(\bY_{n \gamma}) } \rmd s . 
\end{equation}
We now bound  $\expe{\mathrm{B}_1} $ and $\expe{\mathrm{B}_2}$. Let $s \in \ccint{n \gamma,(n+1)\gamma}$. We
first give a bound on $\expeLigne{\absLigne[2]{ b(\bY_{s}) -b(\bY_{n\gamma}) }}$. By \eqref{eq:def_synchronuous} and It\=o's formula, 
\begin{equation}
  \label{eq:ito_b_bY}
 b(\bY_{s}) -b(\bY_{n\gamma})  = \int_{n\gamma}^s \generatorD b(\bY_{n \gamma}, \bY_u)  \rmd u + \int_{n\gamma}^s \langle\nabla b(\bY_u) ,\rmd B_u\rangle \eqsp.
\end{equation}
Therefore, we obtain using the Cauchy-Schwarz inequality and  It\=o's isometry,
\begin{eqnarray*}
 \lefteqn{\expe{ \abs[2]{b(\bY_{s}) -b(\bY_{n\gamma})} } }\\
 & \leq &  2 \defEns{ \expe{\abs[2]{ \int_{n\gamma}^s \generatorD b(\bY_{n \gamma}, \bY_u)  \rmd u}} + \expe{\abs[2]{\int_{n\gamma}^s \langle \nabla b(\bY_u) ,\rmd B_u\rangle }}} \\
  & \leq &   2 \defEns{ \expe{ (s-n \gamma) \int_{n\gamma}^s \abs[2]{\generatorD b(\bY_{n \gamma}, \bY_u) } \rmd u} + \expe{\int_{n\gamma}^s\Tr\parenthese{ \nabla b \nabla b^{\transpose}}(\bY_u) \rmd u }}\eqsp. 
\end{eqnarray*}
By \Cref{bound_moment_bY}, for any $u \in \ccint{n \gamma,(n+1)\gamma}$,
\begin{equation}
\label{eq:proof_boundby_M_1_M_2_1}
  \expe{   \abs[2]{\generatorD b(\bY_{n \gamma}, \bY_u) }} \leq  \tM_1 \eqsp, \quad\text{and}\quad  \expe{\Tr\parenthese{ \nabla b \nabla b^{\transpose}}(\bY_u)  }  \leq \tM_2 \eqsp.
\end{equation}
Therefore, we get
\begin{equation}
   \expe{    \abs[2]{b(\bY_{s}) -b(\bY_{n\gamma})}}\leq   2 (s-n\gamma)^2 \tM_1 + 2 (s-n\gamma )\tM_2  \eqsp.
    \end{equation}
 We can now bound  $ \expe{\mathrm{B}_1} $. 
Let $t \in \ccint{n\gamma, (n+1)\gamma}$ and define
     \begin{equation}
\mathrm{B}_{11} =       {\expe{ \int_{n \gamma}^t \ps{Z_s} { \int_{n\gamma}^s\langle\nabla b(\bY_u), \rmd B_u \rangle} \rmd s }} \eqsp.
\end{equation}
By \eqref{eq:def_I_2}, \eqref{eq:ito_b_bY}, the Cauchy-Schwarz inequality and  \eqref{eq:proof_boundby_M_1_M_2_1}, 
    \begin{eqnarray}
\nonumber
    {\expe{\mathrm{B}_1}} &\leq & \frac 12\expe{\int_{n\gamma}^t \abs[2]{Z_s} \rmd s } +  \frac 12\expe{\int_{n\gamma}^t\abs[2]{  \int_{n\gamma} ^u  \generatorD b(\bY_{n \gamma}, \bY_u) \rmd u } \rmd s}  + \mathrm{B}_{11} \\
      \label{eq:bound_I_1_langevin}
      &  \leq &\frac 12 {\int_{n\gamma}^t  \expe{\abs[2]{Z_s}}  \rmd s + \frac 16 \gamma^3 \tM_1   } + \mathrm{B}_{11}\eqsp .
    \end{eqnarray}
        We now bound $\abs{\mathrm{B}_{11}}$. Denote by  $(\mcf_{\tilde{t}}^B)_{{\tilde{t}} \geq 0}$ the filtration associated with $(B_{\tilde{t}})_{\tilde{t} \geq 0}$.     Note that since $(\int_{0}^{\tilde{t}} \nabla b ( \bY_{s}) \rmd B_{s})_{{\tilde{t}} \geq 0}$ is a $(\mcf_{\tilde{t}}^B)_{{\tilde{t}} \geq 0}$-martingale and using that  $(Y_{\tilde{t}},\bY_{\tilde{t}})_{\tilde{t} \geq 0}$ is $(\mcf_{\tilde{t}}^B)_{\tilde{t} \geq 0}$-adapted, we have for any $t \in \ccint{n \gamma, (n+1)\gamma}$, and $u \in \ccint{0,n\gamma}$, $\expeLigne{\psLigne{b(Y_{u}) - \tbg(\bY_{u})}{\int_{n\gamma}^{{t}} \nabla b ( \bY_{{s}}) \rmd B_{s}}} = 0$ and $\expeLigne{\psLigne{Z_u}{\int_{n\gamma}^{{t}} \nabla b ( \bY_{{s}}) \rmd B_{s}}} = 0$. 
    Therefore, by  Fubini's theorem, \eqref{eq:expre_theta_langevin} and the Cauchy-Schwarz inequality, we obtain for any $t \in \ccint{n\gamma,(n+1)\gamma}$,
\begin{align}
  \nonumber
  \mathrm{B}_{11} &=    \int_{n \gamma}^t  {\expe{  \ps{Z_s-Z_{n\gamma} } { \int_{n\gamma}^s\langle \nabla b(\bY_u), \rmd B_u\rangle }   } } \rmd s\\
  \nonumber
         & = { \int_{n \gamma}^t \expe{\ps{\int_{n \gamma }^s \{ b(Y_u) - \tbg(\bY_{n \gamma} )  \} \rmd u } { \int_{n\gamma}^s\langle\nabla b(\bY_u), \rmd B_u\rangle }    } \rmd s }  \\
           \nonumber
         & = { \int_{n \gamma}^t \expe{\ps{\int_{n \gamma }^s \{ b(Y_u) - b(Y_{n \gamma} )  \} \rmd u } { \int_{n\gamma}^s\langle\nabla b(\bY_u), \rmd B_u\rangle }    } \rmd s }  \\
         & \leq \parentheseDeux{ \int_{n \gamma}^t \expeE{ \abs[2]{\int_{n \gamma }^s \{ b(Y_u) - b(Y_{n \gamma} )  \} \rmd u}  }\rmd s }^{1/2}
         \,  \left( \frac{\gamma^2}2\tM_2\right)^{1/2} ,
  \label{eq:first_bound_I_2_langevin}
\end{align}
where we have used in the last step that by It\=o's isometry,
$${ \int_{n \gamma}^t \expeE{ \abs[2]{ \int_{n\gamma}^s\langle\nabla b(\bY_u) ,\rmd B_u\rangle } } \rmd s }
=\int_{n \gamma}^t  \int_{n\gamma}^s \expeE{  \Tr(\nabla b \nabla b^{\transpose})(\bY_u) }\rmd u  \, \rmd s \le \frac{\gamma^2}2\tM_2 \eqsp. $$
Moreover, analogously as in \eqref{eq:lem:bound_w_2_euler_a}, \eqref{eq:lem:bound_w_2_euler_b} and \eqref{eq:lem:bound_w_2_euler_b_a}, we obtain 
\begin{equation}
 \label{eq:second_bound_I_2_langevin}
 \expeE{ \abs[2]{\int_{n \gamma }^s \{ b(Y_u) - b(Y_{n \gamma} )  \} \rmd u}  }\rmd s \ \le \frac{\gamma^4}{2}\tM_3+\gamma^3\tM_4 .
\end{equation}
The only difference to the argument used above is that now the law of $Y_u$
is $\pig \rmP_u$ instead of $\pi$, and therefore the constants $M_1$ and $M_2$ appearing in \eqref{eq:lem:bound_w_2_euler_b} and \eqref{eq:lem:bound_w_2_euler_b_a} are replaced by $\tM_3$ and $\tM_4$, respectively.\smallskip

By combining \eqref{eq:bound_I_1_langevin}, \eqref{eq:first_bound_I_2_langevin} and \eqref{eq:second_bound_I_2_langevin}, we conclude that
$$  {\expe{\mathrm{B}_1}} 
      \leq \ \frac 12 {\int_{n\gamma}^t \expe{ \abs[2]{Z_s}}  \rmd s + \frac 16 \gamma^3 \tM_1   } +\frac 12\gamma^{7/2}\tM_2^{1/2}\tM_3^{1/2} + \frac{1}{\sqrt 2}\gamma^3\tM_2^{1/2}\tM_4^{1/2}
      \eqsp .$$
Finally, by Cauchy-Schwarz, \Cref{as:tilde_b_b} and since $\bY_{n\gamma}$ has distribution $\pig$, 
\begin{equation}
{\expe{\mathrm{B}_2}} \leq \frac 12{\int_{n \gamma}^t\expe{ \abs[2]{Z_s}} \rmd s  + \frac{\gamma^2}2\int_{n\gamma}^t \expe{\bfGamma^2(\bY_{n\gamma})} \rmd s }  \leq \frac 12{\int_{n \gamma}^t \expe{\abs[2]{Z_s}} \rmd s  +\frac{ \gamma^3}2 \tM_5} . 
\end{equation}
Taking expectations in \eqref{eq:0:lem:bound_w_2_euler_altern} and inserting the bounds completes the proof.
\end{proof}

\begin{proof}[Proof of \Cref{theo:bias_final_ula_2}]
The result is a direct consequence of \Cref{theo:bound_w_2_euler_altern}
and the inequalities in \eqref{eq:accuracyinvariantmeasure2} and \eqref{eq:accuracyinvariantmeasure3}.
\end{proof}



\subsection{Proofs of \Cref{theo:regular_transf} and \Cref{theo:bound_tv}}
\label{sec:proofs-crefth-crefth_tv}

Define
for all $\gamma >0$, the function $\nFun : \ooint{0,\plusinfty} \to \nset$ by
\begin{equation}
\label{eq:def_nFun}
  \nFun(\gamma) = \ceil{\log\parenthese{\gamma^{-1}}/ \log(2)} \eqsp.
\end{equation}

\begin{proof}[Proof of \Cref{theo:regular_transf}]
Under \Cref{as:b_lip}-\Cref{ass:one_side_lip_b} and $\sup_{x \in\rset^d} \ps{b(x)}{x} < \plusinfty$,  \cite[Theorem 19]{debortoli2019convergence} shows that for any $t \geq 0$,
  \begin{equation}
    \label{eq:17}
          \tvnorm{\updelta_x \rmP_t - \updelta_y \rmP_t} \leq \limsup_{k \to \plusinfty} \tvnorm{\updelta_x \rmR_{t/k}^k  - \rmR_{t/k}^k} \eqsp,
        \end{equation}
        where $\rmR_{t/k}$ is given by \eqref{eq:def_rmR_gamma} with $\tb_{t/k} \equiv b$. 
        Note that by \Cref{as:b_lip} and \Cref{ass:one_side_lip_b}, for any $x,y \in\rset^d$ and $\gamma\in\ocint{0,\bgamma}$, $\norm{x+\gamma b(x) - \{y+\gamma b(y)\}}^2 \leq (1+\gamma \upkappa(\gamma)) \norm{x-y}^2$, with $\upkappa(\gamma) = 2\kappa + L^2\gamma$. Therefore, by \cite[Theorem 19]{durmus:moulines:bernoulli}, for any $\gamma \in\ocint{0,\bgamma}$ and $k \in\nset$,
        \begin{equation}
          \label{eq:18}
          \tvnorm{\updelta_x \rmR_\gamma ^k - \updelta_y \rmR_\gamma ^k} \leq 1 - 2 \Phibf\parenthese{ - \{\upkappa(\gamma)\}^{1/2}\frac{\norm{x-y}}{2 \sqrt{1-(1+\upkappa(\gamma)\gamma)^{k+1}}}} \eqsp,
        \end{equation}
        where $\Phibf$ is the cumulative distribution function of the standard one-dimensional Gaussian distribution. 
          Combining this result with \eqref{eq:17} completes the proof upon using that $1 - 2 \Phibf(-u) \leq u\sqrt{2/\uppi}$ for any $u \geq 0$. 
        \end{proof}

        \begin{proof}[Proof of \Cref{theo:bound_tv}]
Let $\gamma \in\ocint{0,\bgamma}$ and set $t_{\gamma} = \gamma 2^{\nFun(\gamma)}$ with $\nFun(\gamma)$ defined in \eqref{eq:def_nFun}.  We consider the following decomposition 
  \begin{equation}
    \label{eq:20}
        \tvnorm{\pi-\pi_{\gamma}} \leq         \tvnorm{\pi \rmP_{t_{\gamma}} -\pi_{\gamma}\rmP_{t_{\gamma}}} +\tvnorm{\pi_{\gamma} \rmP_{t_{\gamma}} - \pi_{\gamma} } \eqsp. 
      \end{equation}
      First by \Cref{ass:wasser_to_tv}-\ref{ass:wasser_to_tv_ii} and \Cref{lem:tv_to_wasser}, we get using $2^{\nFun(\gamma)} \gamma \geq 1$
      \begin{equation}
        \label{eq:21}
        \tvnorm{\pi \rmP_{t_{\gamma}} -\pi_{\gamma}\rmP_{t_{\gamma}}} \leq  \CTV \BTV \chi(1) \, \gamma \eqsp. 
      \end{equation}
      It remains to bound the second term in \eqref{eq:20} for which we apply \Cref{lem:bound_tv_lemma} and the bound
      \begin{equation}
        \label{eq:24}
        \sum_{k=1}^{\nFun(\gamma)} \CTV \ATV  \chi(2^{k-1} \gamma)[ \gamma^{3} 2^{k-1}]^{1/2}\exp(\lambdaTV 2^{k-1}\gamma) \leq  4\nFun(\gamma) \gamma  \CTV \ATV \bar{\chi} \exp(\lambdaTV 2)\eqsp,
      \end{equation}
   where we  have used $2^{\nFun(\gamma)} \gamma \leq 4$.
\end{proof}

\begin{lemma}
  \label{lem:tv_to_wasser}
  Assume \Cref{assum:diffusion_semigroup_a} and \Cref{ass:wasser_to_tv}-\ref{ass:wasser_to_tv_iii}.
  Then, for any probability measure $\mu,\nu \in \mathcal{P}_1(\rset^d)$ and $t> 0$, we get
  \begin{equation}
    \label{eq:22}
    \tvnorm{\mu \rmP_t -\nu \rmP_t} \leq \CTV \chi(t) \wasserstein[1]{\mu,\nu} \eqsp.
  \end{equation}
\end{lemma}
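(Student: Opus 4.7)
The plan is to combine the singleton regularization bound of \Cref{ass:wasser_to_tv}-\ref{ass:wasser_to_tv_iii} with a standard coupling argument, and then to extend from the short-time regime $t\in\ocintLigne{0,2}$ to all $t>0$ by exploiting that the total variation distance is non-expansive under Markov kernels.

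The first step is to record the general principle that for any Markov kernel $K$ on $(\rset^d,\mathcal B(\rset^d))$ and any coupling $\zeta\in\coupling{\mu,\nu}$,
\begin{equation*}
\tvnorm{\mu K - \nu K}\ \le\ \int_{\rset^d\times\rset^d} \tvnorm{\updelta_x K - \updelta_y K}\,\zeta(\rmd x,\rmd y)\eqsp.
\end{equation*}
This follows from the identity $\mu K-\nu K=\int (\updelta_x K-\updelta_y K)\,\zeta(\rmd x,\rmd y)$ and Jensen's inequality (or the triangle inequality) applied to the total variation norm.

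For $t\in\ocintLigne{0,2}$ I would apply this with $K=\rmP_t$ and, since $\mu,\nu\in\mcp_1(\rset^d)$, take $\zeta$ to be an optimal coupling for $\wasserstein[1]{\mu,\nu}$. \Cref{ass:wasser_to_tv}-\ref{ass:wasser_to_tv_iii} then gives
\begin{equation*}
\tvnorm{\mu \rmP_t -\nu \rmP_t}\ \le\ \CTV\, t^{-1/2}\int_{\rset^d\times\rset^d} \norm{x-y}\,\zeta(\rmd x,\rmd y)\ =\ \CTV\, t^{-1/2}\,\wasserstein[1]{\mu,\nu}\eqsp.
\end{equation*}
For $t>2$, I use the semigroup identity $\rmP_t=\rmP_2\rmP_{t-2}$ to write $\mu\rmP_t=(\mu\rmP_2)\rmP_{t-2}$ and $\nu\rmP_t=(\nu\rmP_2)\rmP_{t-2}$; since total variation is non-expansive under the Markov kernel $\rmP_{t-2}$, this yields $\tvnorm{\mu\rmP_t-\nu\rmP_t}\le \tvnorm{\mu\rmP_2-\nu\rmP_2}\le \CTV\,2^{-1/2}\wasserstein[1]{\mu,\nu}$. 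Combining the two regimes gives the claim with $\chi(t)=(t\wedge 2)^{-1/2}$, which is consistent with the uses of $\chi$ (at arguments in $\ocintLigne{0,1}$ and at $t=1$) in the proof of \Cref{theo:bound_tv}.

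No step here presents a serious obstacle; the argument is essentially a coupling transfer of the Assumption-iii bound followed by a data-processing reduction. The only mildly delicate point to double-check is the existence of an optimal $\wassersteinD[1]$-coupling realizing $\wasserstein[1]{\mu,\nu}=\int \norm{x-y}\,\zeta(\rmd x,\rmd y)$, which is standard under $\mu,\nu\in\mcp_1(\rset^d)$ (see e.g.\ \cite{VillaniTransport}); alternatively, one may take an infimum over $\zeta\in\coupling{\mu,\nu}$ at the end.
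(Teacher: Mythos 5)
Your proposal is correct and follows essentially the same route as the paper: transfer the total variation bound to a coupling integral and then invoke \Cref{ass:wasser_to_tv}-\ref{ass:wasser_to_tv_iii} with an optimal $\wassersteinD[1]$-coupling. The one genuine difference is that you explicitly handle $t>2$ (where Assumption iii does not directly apply) via the semigroup identity and the non-expansiveness of total variation under Markov kernels, whereas the paper's proof stops at the coupling step and leaves the extension beyond $[0,2]$, as well as the precise definition of $\chi$, implicit. Your inference $\chi(t)=(t\wedge 2)^{-1/2}$ is consistent with the uses of $\chi$ in the proof of \Cref{theo:bound_tv} (all of which in fact invoke the lemma only at arguments $\le 2$), so your version is, if anything, a small but real completion of the paper's argument rather than a departure from it.
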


\begin{proof}
  Let   $\mu,\nu \in \mathcal{P}_1(\rset^d)$ and $t > 0$.  First, for any coupling $\xi$ of $\mu$ and $\nu$, we easily get using $\tvnorm{\mu'-\nu'} = (1/2)\sup\{\absLigne{\int f \rmd \mu' - \int f \rmd \nu'} \, :\, \abs{f} \leq 1\}$, 
  \begin{equation}
    \label{eq:23}
   \tvnorm{\mu \rmP_t -\nu \rmP_t} \leq \int_{\rset^d \times \rset^d} \xi(\rmd x \rmd y ) \tvnorm{\updelta_x \rmP_t - \updelta_y \rmP_t} \eqsp. 
  \end{equation}
  Using \Cref{ass:wasser_to_tv}-\ref{ass:wasser_to_tv_iii} and taking for $\xi$ the optimal coupling between $\mu$ and $\nu$ for $\wassersteinDLigne[1]$ complete the proof.
\end{proof}

\begin{lemma}
  \label{lem:bound_tv_lemma}
      Assume \Cref{assum:diffusion_semigroup_a}, \Cref{as:r_gamma}, \Cref{as:b_lip} and \Cref{ass:wasser_to_tv}. Suppose in addition that $\bgamma  <1$.
Then for any $\gamma \in\ocint{0,\bgamma}$, $\ell \in \nset$, $\ell \geq 2^{\nFun(\gamma)}$,
\begin{multline}
  \tvnorm{\pi_{\gamma} \rmP_{\ell \gamma} - \pi_{\gamma}} \leq \CTV \ATV  \chi(2^n \gamma) [\gamma^{3} (\ell-2^{\nFun(\gamma)})]^{1/2}\exp(\lambdaTV (\ell-2^{\nFun(\gamma)})\gamma)\\
  + 2^{-3/2} L  \defEns{\gamma^3 \tilde{M}_6/3 +d \gamma^2}^{1/2} 
+ \sum_{k=1}^{\nFun(\gamma)} \CTV \ATV  \chi(2^{k-1} \gamma)[ \gamma^{3} 2^{k-1}]^{1/2}\exp(\lambdaTV 2^{k-1}\gamma)   \eqsp,
\end{multline}
where $\tilde{M}_6$ is defined in \eqref{eq:def_tilde_M_6}.
  \end{lemma}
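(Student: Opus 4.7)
The proof follows the dyadic telescoping strategy of \cite[Corollary 12]{durmus:moulines:bernoulli}, with a one-step total variation comparison handled by Pinsker's inequality and Girsanov's theorem. Write $n=\nFun(\gamma)$ and $N=2^{n}$, so that $N\gamma\in[1,2]$ (using $\bar\gamma<1$). Exploiting the invariance $\pi_{\gamma}\rmR_{\gamma}^{j}=\pi_{\gamma}$, I split
\[
\pi_{\gamma}\rmP_{\ell\gamma}-\pi_{\gamma}=\bigl(\pi_{\gamma}\rmP_{(\ell-N)\gamma}-\pi_{\gamma}\rmR_{\gamma}^{\ell-N}\bigr)\rmP_{N\gamma}+\bigl(\pi_{\gamma}\rmP_{N\gamma}-\pi_{\gamma}\rmR_{\gamma}^{N}\bigr).
\]
The first summand has a ``free'' $\rmP_{N\gamma}$ on the right, which by \Cref{lem:tv_to_wasser} can be traded for $\CTV\chi(N\gamma)$ times the Wasserstein distance; combining this with \Cref{ass:wasser_to_tv}-\ref{ass:wasser_to_tv_i} produces the first term $\CTV\ATV\chi(2^{n}\gamma)[\gamma^{3}(\ell-2^{n})]^{1/2}\exp(\lambdaTV(\ell-2^{n})\gamma)$ of the target bound.

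The short-time summand is handled by dyadic recursion. For $a_{k}=\tvnorm{\pi_{\gamma}\rmP_{2^{k}\gamma}-\pi_{\gamma}\rmR_{\gamma}^{2^{k}}}$, splitting at the midpoint and using $\pi_{\gamma}\rmR_{\gamma}^{2^{k-1}}=\pi_{\gamma}$ in the second summand yields
\[
\pi_{\gamma}\rmP_{2^{k}\gamma}-\pi_{\gamma}\rmR_{\gamma}^{2^{k}}=\bigl(\pi_{\gamma}\rmP_{2^{k-1}\gamma}-\pi_{\gamma}\rmR_{\gamma}^{2^{k-1}}\bigr)\rmP_{2^{k-1}\gamma}+\bigl(\pi_{\gamma}\rmP_{2^{k-1}\gamma}-\pi_{\gamma}\rmR_{\gamma}^{2^{k-1}}\bigr).
\]
Bounding the first summand with \Cref{lem:tv_to_wasser} and \Cref{ass:wasser_to_tv}-\ref{ass:wasser_to_tv_i} gives $a_{k}\le a_{k-1}+\CTV\ATV\chi(2^{k-1}\gamma)[\gamma^{3}2^{k-1}]^{1/2}\exp(\lambdaTV 2^{k-1}\gamma)$; iterating from $k=1$ to $k=n$ produces the dyadic sum of the target bound, plus a residual $a_{0}=\tvnorm{\pi_{\gamma}\rmP_{\gamma}-\pi_{\gamma}\rmR_{\gamma}}$.

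It remains to estimate $a_{0}$ through a path-space comparison. Let $\mathbb{P}^{\bY}_{x}$ denote the law on $\rmC([0,\gamma],\rset^{d})$ of the constant-drift process $\bY_{t}=x+t\tbg(x)+\sqrt{2}B_{t}$ (whose time-$\gamma$ marginal is $\rmR_{\gamma}(x,\cdot)$), and $\mathbb{P}^{Y}_{x}$ the law of the solution of \eqref{eq:sde} from $x$ (with time-$\gamma$ marginal $\rmP_{\gamma}(x,\cdot)$). Since marginal total variation is dominated by path-space total variation, Girsanov's theorem (computing the Radon--Nikodym derivative under $\mathbb{P}^{\bY}_{x}$) together with the Lipschitz condition \Cref{as:b_lip} yield $\mathrm{KL}(\mathbb{P}^{\bY}_{x}\,\|\,\mathbb{P}^{Y}_{x})\le (L^{2}/4)\,\mathbb{E}^{\bY}_{x}\int_{0}^{\gamma}|\bY_{s}-x|^{2}\rmd s$. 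The explicit Gaussian law $\bY_{s}-x=s\tbg(x)+\sqrt{2}B_{s}$ gives $\mathbb{E}^{\bY}_{x}|\bY_{s}-x|^{2}=s^{2}|\tbg(x)|^{2}+2ds$, whose $s$-integral averaged against $\pi_{\gamma}$ evaluates to $\gamma^{3}\tilde M_{6}/3+d\gamma^{2}$. Pinsker's inequality $\tvnorm{\cdot}\le(\mathrm{KL}/2)^{1/2}$ followed by Jensen's inequality then delivers $a_{0}\le 2^{-3/2}L\{\gamma^{3}\tilde M_{6}/3+d\gamma^{2}\}^{1/2}$, closing the proof.

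The principal obstacle is the bookkeeping of the telescoping. The direction of the dyadic splitting (writing $\rmP_{2^{k}\gamma}=\rmP_{2^{k-1}\gamma}\rmP_{2^{k-1}\gamma}$ with the discretized kernel $\rmR_{\gamma}^{2^{k-1}}$ placed on the left) is dictated by the need to exploit the invariance of $\pi_{\gamma}$ under $\rmR_{\gamma}$ but not $\rmP$, and the choice of $\mathbb{P}^{\bY}_{x}$ rather than $\mathbb{P}^{Y}_{x}$ as Girsanov reference is dictated by the desire for an explicit Gaussian moment of $\bY_{s}-x$, so that $\tilde M_{6}=\int|b|^{2}\rmd\pi_{\gamma}$ emerges without invoking Lyapunov-type estimates on $\rmP_{t}$-pushforwards of $\pi_{\gamma}$.
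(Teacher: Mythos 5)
Your proof is correct and follows essentially the same approach as the paper: you use the same dyadic telescoping decomposition (anchoring the long time $(\ell-2^n)\gamma$ to a Wasserstein-to-TV regularization via $\rmP_{2^n\gamma}$, then recursing by halving), the same use of \Cref{lem:tv_to_wasser} together with \Cref{ass:wasser_to_tv}-\ref{ass:wasser_to_tv_i}, and the same residual one-step term $\tvnorm{\pi_\gamma\rmP_\gamma - \pi_\gamma\rmR_\gamma}$ estimated by Girsanov and Pinsker --- the latter is precisely what the paper invokes by citing the adaptation of \cite[Proposition 2]{durmus:moulines:2016}, and you simply spell it out. One cosmetic remark: your Gaussian-moment computation naturally produces $\int |\tbg|^2\,\rmd\pi_\gamma$ rather than the paper's $\tilde M_6 = \int |b|^2\,\rmd\pi_\gamma$; these coincide in the reference case $\tbg = b$ (which is the setting the cited proposition treats), and the small mismatch in the general $\tbg\neq b$ case is already present in the paper's own citation.
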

\begin{proof}

  For ease of notation, denote $n = \nFun(\gamma)$ for $ \gamma  \in\ocint{0,\bgamma}$. Let $\ell \in \nset$ such that $\ell  \geq 2^n$.
    Consider the following decomposition
\begin{align}
\nonumber
&  \tvnorm{\pi_{\gamma} \rmP_{\ell \gamma} - \pi_{\gamma}} = \tvnormEq{\pi_{\gamma} \rmP_{\ell \gamma} - \pi_{\gamma} \rmR_{\gamma}^\ell} \leq
\tvnormEq{\defEns{\pi_{\gamma} \rmP_{(\ell-2^n) \gamma} - \pi_{\gamma} \rmR_{\gamma}^{\ell-2^n}} \rmP_{2^n \gamma}} \\
\label{eq:decomp_proof_tv_fix_1_2}
& \qquad \qquad + \tvnormEq{\pi_{\gamma}\rmR_{\gamma}^{\ell-1}\defEns{\rmP_{\gamma}-\rmR_{\gamma}}}
+ \sum_{k=1}^{n} \tvnormEq{\pi_{\gamma} \rmR_{\gamma}^{\ell-2^{k}}\defEns{  \rmP_{2^{k-1}\gamma}-\rmR_{\gamma}^{2^{k-1}}} \rmP_{2^{k-1} \gamma} }   \eqsp.
\end{align}
We bound each term in the right hand side.
First by \Cref{lem:tv_to_wasser} and \Cref{ass:wasser_to_tv}-\ref{ass:wasser_to_tv_i}, we have
\begin{align}
  \nonumber
  & \tvnormEq{\defEns{\pi_{\gamma} \rmP_{(\ell-2^n) \gamma} - \pi_{\gamma} \rmR_{\gamma}^{\ell-2^n}} \rmP_{2^n \gamma}} \\
    \label{eq:first_bound_proof_tv_fix_2}
& \qquad \qquad  \leq \CTV \ATV  \chi(2^n \gamma) [\gamma^{3} (\ell-2^n)]^{1/2}\exp(\lambdaTV (\ell-2^n)\gamma)  \eqsp.
\end{align}
Similarly we get for all $k \in \defEns{1,\cdots,2^n}$,
\begin{align}
  \nonumber
&\tvnormEq{ \pi_{\gamma} \rmR_{\gamma}^{\ell-2^{k}}\defEns{  \rmP_{2^{k-1}\gamma}-\rmR_{\gamma}^{2^{k-1}}} \rmP_{2^{k-1} \gamma}} = \tvnormEq{ \pi_{\gamma} \defEns{  \rmP_{2^{k-1}\gamma}-\rmR_{\gamma}^{2^{k-1}}} \rmP_{2^{k-1} \gamma}}
  \\
  \label{eq:second_bound_proof_tv_fix_2}
  & \qquad \qquad \leq  \CTV \ATV  \chi(2^{k-1} \gamma) [\gamma^{3/2} 2^{k-1}]^{1/2}\exp(\lambdaTV 2^{k-1}\gamma)  \eqsp.
\end{align}
For the last term, adapting the proof of  \cite[Proposition 2]{durmus:moulines:2016} to a general drift $b$ in place of $\nabla U$,  we have
\begin{equation}
 \tvnormEq{\pi_{\gamma} \rmR_{\gamma}^{\ell-1}\defEns{\rmP_{\gamma}-\rmR_{\gamma}}}^2  \leq
2^{-3} L^2  \defEns{\gamma^3 \tilde{M}_6/3 +d \gamma^2} \eqsp.
\end{equation}
Combining this inequality, \eqref{eq:first_bound_proof_tv_fix_2}, \eqref{eq:second_bound_proof_tv_fix_2}  in \eqref{eq:decomp_proof_tv_fix_1_2} concludes the proof.
\end{proof}

\subsection{Postponed proofs of  \Cref{subsec:HMC}}
\label{sec:proof-crefth}

\begin{lemma}
  \label{lem:trapeze_method_bound}
  Let $f \in \rmC^{2}(\ccint{a,b}, \rset^d)$ for $a,b \in \rset$, $a <b$. Then for any $t \in \ccint{a,b}$,
  \begin{equation}
      \label{eq:trapeze_method_bound_2}
   \int_{a}^{b} \{ f(t) - (f(b)+f(a))/2\} \rmd t =  - (1/2) \int_{a}^b f''(t)(b-t)(t-a)\rmd t\eqsp. 
 \end{equation}
\end{lemma}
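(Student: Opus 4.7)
The identity to be proved is the standard representation of the error of the trapezoidal quadrature rule (the ``$t$'' quantifier on the left is vacuous since $t$ is the dummy variable of integration). My plan is to prove it by two successive integrations by parts, using that the polynomial $\phi(t) \eqdef (b-t)(t-a)$ vanishes at the endpoints.

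Set $\phi(t) = (b-t)(t-a)$. A direct computation yields $\phi(a) = \phi(b) = 0$, $\phi'(t) = (b+a) - 2t$ so that $\phi'(a) = b-a$ and $\phi'(b) = -(b-a)$, and $\phi''(t) = -2$. Since $f$ is $\rmC^2$ on $[a,b]$, both $f'\phi$ and $f\phi'$ are $\rmC^1$, so integration by parts is justified.

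Starting from the right-hand side, integration by parts (componentwise in $\rset^d$ if needed) gives
\begin{equation*}
\int_a^b f''(t)\,\phi(t)\,\rmd t \;=\; \bigl[f'(t)\phi(t)\bigr]_a^b - \int_a^b f'(t)\,\phi'(t)\,\rmd t \;=\; -\int_a^b f'(t)\,\phi'(t)\,\rmd t,
\end{equation*}
using $\phi(a)=\phi(b)=0$. A second integration by parts, now using $\phi''\equiv -2$ and the boundary values of $\phi'$, gives
\begin{equation*}
-\int_a^b f'(t)\,\phi'(t)\,\rmd t \;=\; -\bigl[f(t)\phi'(t)\bigr]_a^b + \int_a^b f(t)\,\phi''(t)\,\rmd t \;=\; (b-a)\bigl(f(b)+f(a)\bigr) - 2\int_a^b f(t)\,\rmd t.
\end{equation*}
Combining these two identities and multiplying by $-1/2$ yields
\begin{equation*}
-\frac{1}{2}\int_a^b f''(t)\,\phi(t)\,\rmd t \;=\; \int_a^b f(t)\,\rmd t \;-\; \frac{b-a}{2}\bigl(f(b)+f(a)\bigr) \;=\; \int_a^b\!\Bigl\{f(t) - \tfrac{1}{2}\bigl(f(b)+f(a)\bigr)\Bigr\}\rmd t,
\end{equation*}
which is the claimed identity. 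There is no real obstacle here: the only thing to be careful about is the sign bookkeeping in the boundary terms and the fact that $f$ takes values in $\rset^d$, but the argument applies componentwise without modification.
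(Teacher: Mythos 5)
Your proof is correct. Both you and the paper rely on integration by parts, but the routes differ in organization: the paper first establishes a pointwise representation of $f(t)$ minus its linear interpolant on $[a,b]$ (integrating $f'$ over $[a,t]$ and $[t,b]$ separately and integrating by parts within each subinterval), and then integrates that pointwise identity over $t \in [a,b]$ and collapses the resulting iterated integrals. You instead work directly on the right-hand side, doing two integrations by parts against the weight $\phi(t) = (b-t)(t-a)$, exploiting $\phi(a)=\phi(b)=0$ and $\phi'' \equiv -2$ to land immediately on the trapezoidal rule. Your approach is tighter: it avoids the Fubini/double-integral step entirely and makes the boundary bookkeeping transparent through the single auxiliary polynomial $\phi$, whereas the paper's pointwise decomposition is more verbose (and as printed contains some sign and index typos that your version sidesteps). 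Both ultimately encode the same Peano-kernel fact; yours is the more economical presentation, and the remark that the identity holds componentwise in $\mathbb R^d$ is a sensible inclusion.
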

\begin{proof}
    For any $t \in \ccint{a,b}$, since $f \in \rmC^{2}(\ccint{a,b}, \rset^d)$, we have using integration by parts twice
  \begin{align}
&    f(t)-\frac{(b-t)f(a) + (t-a)f(b)}{b-a}  = \parentheseDeux{ \frac{(b-t)}{b-a}\int_a^t f'(s) \rmd s -\frac{(t-a)}{b-a}  \int_t^b f'(s) \rmd s } \\
& \qquad  = \frac{(b-t)}{b-a}\defEns{ (t-a) f'(t) - \int_a^t f''(s)(s-a) \rmd s }  \\
    &\qquad\qquad +\frac{(t-a)}{b-a}\defEns{ (t-b) f'(t) - \int_t^b f''(s)(s-b) \rmd s }   \eqsp,
  \end{align}
  which implies
    \begin{multline}
  \label{eq:trapeze_method_bound_1}
(b-a)    f(t)  = (b-t)f(a) + (t-a)f(b) +  \frac{(b-t)}{b-a}\int_{a}^s f''(s)(s-a) \rmd s\\ - \frac{(t-a)}{b-a} \int_{t}^b f''(s) (b-s) \rmd s \eqsp,
  \end{multline}
  Now integrating  this identity over $\ccint{a,b}$, we obtain
  \begin{align}
   \int_{a}^{b}  f(t)   \rmd t    
    & = (f(b)+f(a))/2 +   \int_a^b \frac{(s-a) f''(s)}{b-a} \int_{s}^b (b-t) \rmd t  \rmd s \\
    &     \quad \quad  - \int_a^b \frac{(b-s) f''(s)}{b-a} \int_{a}^u (a-t) \rmd t  \rmd s \eqsp,
  \end{align}
  which implies \eqref{eq:trapeze_method_bound_2}. 
\end{proof}

\begin{lemma}
\label{lem:asympto_ihmc}
  Assume \Cref{assum:exact_hmc_kernel}, \Cref{ass:app_F_h} and \Cref{as:F_lip} and let $\gamma \in\ocint{0,\bgamma}$.
  Let $\tilde{\pi} \in \Pens_2(\rset^d)$, and let $G,Q$ be 
  $\mathbb R^d$-valued random variables such that  $G$ is normally distributed with zero-mean and covariance matrix identity, $Q$ has distribution $\pi$ and is independent of $G$.  Define $(\rmX_k,\rmV_k)_{k \ge 0}$ and $(\tilde{\rmX}_k,\tilde{\rmV}_k)_{k \ge 0}$ recursively by $\rmX_0=\tilde{\rmX}_0= Q$, $\rmV_0 = \tilde{\rmV}_0 = G$, and for any $k \in \nset$, 
  $$(\rmX_{k+1},\rmV_{k+1})= \uppsi_{\gamma}(\rmX_k,\rmV_k), \quad (\tilde{\rmX}_{k+1},\tilde{\rmV}_{k+1})= \Psiverlet_{\gamma}(\tilde{\rmX}_k,\tilde{\rmV}_k) ,$$ 
where $\uppsi_\gamma$ and $\Psiverlet_{\gamma}$ are defined by  \eqref{eq:def_hamil_ode}  and \eqref{eq:defPhiverlt_h}. Then for any integer $k \ge 0$,
  \begin{align}
\nonumber  
&   \expeE{\abs[2]{    \rmX_{k+1}-\tilde{\rmX}_{k+1}}}[1/2]\ \le \gamma^3( (M_2/12)^{1/2}+ M_5^{1/2}/2)\\
\label{eq:1:lem:asympto_ihmc}&  \qquad \qquad  +\left(1+\frac{\gamma^2\Ltt}{2}\right)\expeE{\abs[2]{\rmX_{k}-\tilde{\rmX}_{k}}}[1/2] + {\gamma}\,\expeE{\abs[2]{   \rmV_k -\trmV_k}}[1/2]
 \eqsp,\\
 \nonumber
\lefteqn{    \expeE{\abs[2]{    \rmV_{k+1} - \tilde{\rmV}_{k+1}}}[1/2] 
\ \le \   \frac{{\gamma}^3}{2}\left( 
 \left(2+\frac{\gamma\Ltt}{2} \right)M_5^{1/2} 
    +   \frac{\gamma \Ltt M_2^{1/2}}{2 \sqrt{12}}+ \frac{(M_1+2M_4)^{1/2}}{15^{1/2}}\right)
}\\
  \label{eq:2:lem:asympto_ihmc}  & \qquad \qquad + 
     \left(\gamma\Ltt+\frac{{\gamma}^3\Ltt^2}4\right) \expeE{\abs[2]{\rmX_{k}- \trmX_{k}}}[1/2]+
   \left(1+\frac{\gamma^2\Ltt}{2}\right)  \expeE{\abs[2]{    \rmV_{k} - \tilde{\rmV}_{k}}}[1/2] 
  \end{align}
  where  $M_1,M_2$, $M_4$ and $M_5$ are given by \eqref{eq:def_m_1_m_2_m_4} and \eqref{eq:def_m_5}.
\end{lemma}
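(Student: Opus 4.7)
The plan is to expand one step of the exact Hamiltonian flow in a form directly comparable to one Verlet step, isolate the discretization error in closed form, and estimate its $L^2$ norm using the invariance of $\pi \otimes \varphibf_d$ under the flow $(\uppsi_s)_{s \geq 0}$. Setting $(q_s, p_s) = \uppsi_s(\rmX_k, \rmV_k)$ for $s \in \ccint{0,\gamma}$, so that $(\rmX_{k+1}, \rmV_{k+1}) = (q_\gamma, p_\gamma)$, and integrating $\dot q_s = p_s$, $\dot p_s = b(q_s)$, I would first write $\rmV_{k+1} = \rmV_k + \int_0^\gamma b(q_s)\, \rmd s$ and $\rmX_{k+1} = \rmX_k + \gamma \rmV_k + \int_0^\gamma (\gamma - u)\, b(q_u)\, \rmd u$. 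I would then rewrite each integral in a form mirroring the Verlet update \eqref{eq:defPhiverlt_h}: applying \Cref{lem:trapeze_method_bound} to $f(s) = b(q_s)$, with $f''(s) = ((\generatorH)^2 b)(q_s, p_s)$ by \eqref{eq:use_generatorH}, puts $\int_0^\gamma b(q_s)\, \rmd s$ in trapezoidal form, while a first-order Taylor expansion of $b(q_u)$ combined with Fubini rewrites the second integral as $(\gamma^2/2)\, b(\rmX_k) + \tfrac{1}{2} \int_0^\gamma (\gamma - s)^2 (\generatorH b)(q_s, p_s)\, \rmd s$.

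Subtracting one Verlet step then yields explicit decompositions of $\rmX_{k+1} - \trmX_{k+1}$ and $\rmV_{k+1} - \trmV_{k+1}$ into a ``drift-matching'' part, involving $(\rmX_k - \trmX_k)$, $(\rmV_k - \trmV_k)$ and the pointwise drift mismatches $b(\rmX_j) - \tbg(\trmX_j)$ for $j \in \{ k, k+1\}$, plus explicit residuals $E_X$ and $E_V$ given by weighted time integrals of $(\generatorH b)(q_s, p_s)$ and $((\generatorH)^2 b)(q_s, p_s)$, respectively. For the drift mismatches, I would split $b(\rmX) - \tbg(\trmX) = (b(\rmX) - b(\trmX)) + (b(\trmX) - \tbg(\trmX))$ and invoke \Cref{as:F_lip} and \Cref{ass:app_F_h} to bound these by $\Ltt |\rmX - \trmX| + \gamma^2 \bfLambda(\trmX)$; the $L^2$ norm of $\bfLambda(\trmX_k)$ is directly controlled by $M_5^{1/2}$ via \eqref{eq:def_m_5}, since $(\rmX_0, \rmV_0) = (Q, G)$ has law $\pi \otimes \varphibf_d$ and hence $\trmX_k = \proj_q(\Psiverlet_{k\gamma}(Q, G))$ falls within the supremum in the definition of $M_5$.

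For the residuals, Cauchy--Schwarz in time together with Fubini reduces $\|E_X\|_{L^2}$ and $\|E_V\|_{L^2}$ to time integrals of $\expeE{\absLigne[2]{(\generatorH b)(q_s, p_s)}}$ and $\expeE{\absLigne[2]{((\generatorH)^2 b)(q_s, p_s)}}$. Invariance of $\pi \otimes \varphibf_d$ under the Hamiltonian flow gives $(q_s, p_s) \sim \pi \otimes \varphibf_d$ for every $s \in \ccint{0,\gamma}$, so the Gaussian identities \eqref{eq:Liouvillenormsquared} and \eqref{eq:Liouvillesquaredsquared} convert these expectations into $M_2$ and $M_1 + 2 M_4$ respectively, giving $\|E_X\|_{L^2}$ of order $\gamma^3 M_2^{1/2}$ and $\|E_V\|_{L^2}$ of order $\gamma^3 (M_1 + 2 M_4)^{1/2}$. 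Combining via Minkowski's inequality produces \eqref{eq:1:lem:asympto_ihmc} directly; \eqref{eq:2:lem:asympto_ihmc} follows from the same argument, except that the trapezoidal representation for $\rmV_{k+1}$ involves $b(\rmX_{k+1}) - \tbg(\trmX_{k+1})$, forcing me to substitute the just-derived bound on $\|\rmX_{k+1} - \trmX_{k+1}\|_{L^2}$ into the momentum inequality and collect terms. The main obstacle is bookkeeping: tracking all constants through the two Cauchy--Schwarz applications and the substitution carefully enough that the stated coefficients $1 + \gamma^2 \Ltt / 2$ and $\gamma \Ltt + \gamma^3 \Ltt^2 / 4$ emerge correctly.
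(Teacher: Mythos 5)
Your proposal follows essentially the same route as the paper's proof: expand one exact Hamiltonian step, compare to one Verlet step, split the position and velocity errors into a drift-mismatch part (handled via \Cref{as:F_lip} and \Cref{ass:app_F_h}, with the $M_5$ moment controlling $\bfLambda(\trmX_k)$) and a residual driven by $\generatorH b$ (position) and $(\generatorH)^2 b$ (velocity), bound the residual moments by Cauchy--Schwarz in time plus invariance of $\pi\otimes\varphibf_d$ under the flow together with \eqref{eq:Liouvillenormsquared} and \eqref{eq:Liouvillesquaredsquared}, and substitute the position bound into the velocity estimate. The only difference is cosmetic: you pre-integrate via Fubini to turn the nested double/triple time integrals into single weighted integrals before applying Cauchy--Schwarz, whereas the paper applies Cauchy--Schwarz inside the nested integrals directly; both routes produce constants at least as tight as the ones stated.
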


\begin{proof}
 For any $s \ge 0$ let $(\bfrmX_s,\bfrmV_s) =  \uppsi_s(Q,G)$. Note that since $(\uppsi_s)_{s \in \rset_+}$ is the flow associated with \eqref{eq:def_hamil_ode}, we have by definition that for any $k \in \nset$, $(\rmX_k,\rmV_k) = (\bfrmX_{k {\gamma}} , \bfrmV_{k{\gamma}})$, and for any $t,s \geq 0$ with $ s \leq t$,
  \begin{equation}
    \bfrmX_t = \bfrmX_s + \int_s^t \bfrmV_u \rmd u  \eqsp, \qquad     \bfrmV_t = \bfrmV_s + \int_s^t b(\bfrmX_u) \rmd u \eqsp. 
  \end{equation}
    Therefore for any $k \in \nset$, using \eqref{eq:defPhiverlt_h}, we  have that
  \begin{align}
    \rmX_{k+1}-\tilde{\rmX}_{k+1} &= \rmX_k + \int_{k\gamma} ^{(k+1)\gamma} \bfrmY_s \rmd s - \trmX_{k} - \gamma^2 \tbg (\trmX_k) /2 - \gamma \trmY_k   \\
    \label{eq:hmc_inexact_X_1}
                                  &= \rmX_k-\trmX_{k} + \int_{k\gamma} ^{(k+1)\gamma} \int_{k\gamma}^s \{b(\bfrmX_u) -  \tbg (\trmX_k) \} \rmd u + \gamma(\rmY_k -\trmY_k) \eqsp,  \\
        \label{eq:hmc_inexact_Y_1}
    \rmY_{k+1} - \tilde{\rmY}_{k+1} & =      \rmY_{k} - \tilde{\rmY}_{k} + \int_{k\gamma}^{(k+1)\gamma} b(\bfrmX_s) \rmd s - \gamma (\tbg(\trmX_{k+1}) +\tbg(\trmX_k))/2 \eqsp. 
  \end{align}
  In addition, since $(\rmX_0,\rmY_0)$ has distribution $\pi\otimes \mu_{0,\Idd}$, then by  \Cref{assum:exact_hmc_kernel}, for any $s \geq 0$ and $k \in\nset$, $(\bfrmX_s,\bfrmV_s)$ and $(\rmX_k,\rmY_k)$ have distribution  $\pi\otimes \mu_{0,\Idd}$.
We first establish \eqref{eq:1:lem:asympto_ihmc}. By \eqref{eq:hmc_inexact_X_1}, the Minkowski inequality, \Cref{ass:app_F_h} and since $\trmX_k = \Phiverlet[h][k](Q,Z)$,  we have that
  \begin{align}
    & \expeE{\abs[2]{    \rmX_{k+1}-\tilde{\rmX}_{k+1}}}[1/2] \leq \expeE{\abs[2]{\rmX_{k}-\tilde{\rmX}_{k}}}[1/2] + \gamma\expeE{\abs[2]{   \rmY_k -\trmY_k}}[1/2]  \\
    &   + (\gamma^2/2) \, \expeE{\abs[2]{\tbg(\trmX_k) -  b (\trmX_k)  }}[1/2] + \expeE{\abs[2]{  \int_{k\gamma} ^{(k+1)\gamma} \int_{k\gamma}^s \{b(\bfrmX_u) -  b (\trmX_k) \} \rmd u }}[1/2] 
        \label{eq:hmc_inexact_X_2}
  \end{align}
  Now using the  Minkowski and  Cauchy-Schwarz inequalities, we obtain that
  \begin{align}
    &     \expeE{\abs[2]{  \int_{k\gamma} ^{(k+1)\gamma} \int_{k\gamma}^s \{b(\bfrmX_u) -  b (\trmX_k) \} \rmd u \rmd s  }}[1/2]   \leq (\gamma^2/2) \expeE{\abs[2]{   b(\rmX_k) -  b (\trmX_k)   }}[1/2] \\
    &   \qquad\qquad\qquad + \gamma^{1/2 } \expeE{  \int_{k\gamma} ^{(k+1)\gamma} (s-k\gamma) \int_{k\gamma}^s \abs[2]{b(\bfrmX_u) -  b (\bfrmX_{k\gamma}) } \rmd u \rmd s  }[1/2] \\
    & \leq  (\gamma^2/2) \expeE{\abs[2]{   b(\rmX_k) -  b (\trmX_k)   }}[1/2]\\
    &\qquad +  \gamma^{1/2} \expeE{  \int_{k\gamma} ^{(k+1)\gamma} (s-k\gamma) \int_{k\gamma}^s (u-k\gamma) \int_{k\gamma}^u  \abs[2]{\generatorH b (\bfrmX_v , \bfrmY_v) } \rmd v  \rmd u \rmd s  }[1/2] \eqsp,
  \end{align}
  where we used the Cauchy-Schwarz inequality again for the last upper bound.
  We obtain by \Cref{as:F_lip} and \eqref{eq:Liouvillenormsquared} that
  \begin{multline}
    \expeE{\abs[2]{  \int_{k\gamma} ^{(k+1)\gamma} \int_{k\gamma}^s \{b(\bfrmX_u) -  b (\trmX_k) \} \rmd u \rmd s  }}[1/2]  \\\leq
    (\gamma^2\Ltt/2) \expeE{\abs[2]{   \rmX_k -  \trmX_k   }}[1/2]  + \gamma^3(M_2/12)^{1/2} \eqsp.
  \end{multline}
  Plugging this result in \eqref{eq:hmc_inexact_X_2} and using \Cref{ass:app_F_h} and the definition of $M_5$ \eqref{eq:def_m_5}, we obtain \eqref{eq:1:lem:asympto_ihmc}.

    We now turn to showing \eqref{eq:2:lem:asympto_ihmc}. 
By \eqref{eq:hmc_inexact_Y_1}, the Minkowski and Cauchy-Schwarz inequalities, we have
  \begin{equation}
\label{eq:hmc_inexact_Y_2}
    \expeE{\abs[2]{    \rmY_{k+1} - \tilde{\rmY}_{k+1}}}[1/2] =   \expeE{\abs[2]{    \rmY_{k} - \tilde{\rmY}_{k}}}[1/2] +(\gamma/2)A + B \eqsp,
  \end{equation}
  where
  \begin{align}
    A & =  \, \expeE{\abs[2]{ (b(\rmX_{k+1}) +b(\rmX_k)) - (\tbg(\trmX_{k+1}) +\tbg(\trmX_k))}}[1/2] \eqsp, \\
    B &=  \, \expeE{\abs[2]{  \int_{k\gamma}^{(k+1)\gamma} b(\bfrmX_s) \rmd s - \gamma (b(\rmX_{k+1}) +b(\rmX_k))/2 }}[1/2] \eqsp. 
  \end{align}
  We bound $A$ and $B$ separately. By the  Minkowski inequality, \Cref{ass:app_F_h} and \Cref{as:F_lip}, we have
  \begin{align}
    A 
    & \leq  \expeE{\abs[2]{ (b(\trmX_{k+1}) +b(\trmX_k)) - (\tbg(\trmX_{k+1}) +\tbg(\trmX_k))}}[1/2]\\
    &  \qquad \qquad  \qquad \qquad + \expeE{\abs[2]{ (b(\rmX_{k+1}) +b(\rmX_k)) - (b(\trmX_{k+1}) +b(\trmX_k))}}[1/2] \\
    & \leq 2 \gamma^2 M_5^{1/2} + \Ltt \defEns{ \expeE{\abs[2]{\rmX_{k+1}- \trmX_{k+1}}}[1/2]+  \expeE{\abs[2]{\rmX_{k}- \trmX_{k}}}[1/2] } \eqsp. 
  \end{align}
  Then using \eqref{eq:1:lem:asympto_ihmc}, we get that
  \begin{align}
  \label{eq:hmc_inexact_Y_3}
  &A \leq 2  \gamma^2 M_5^{1/2} +  \Ltt \defEns{\gamma  \expeE{\abs[2]{\rmY_{k}- \trmY_{k}}}[1/2]  +       (2+\gamma^2\Ltt/2) \expeE{\abs[2]{\rmX_{k}- \trmX_{k}}}[1/2] }
\\&\qquad \qquad   + \Ltt \defEns{ \gamma^3 M_5^{1/2}/2 + \gamma^3(M_2/12)^{1/2} } \eqsp. 
  \end{align}
  
  Using \eqref{eq:use_generatorH}, \Cref{lem:trapeze_method_bound} and the Cauchy-Schwarz inequality, we obtain that 
  \begin{align}
     &  2 B  =     \expeE{\abs[2]{\int_{k\gamma}^{(k+1)\gamma}  (\generatorH)^{2} b(\bfrmX_s,\bfrmY_s) \{ (s-k\gamma)((k+1)\gamma-s)\}\rmd s}} [1/2]   \\
     & \leq  \gamma^{1/2}  \expeE{\abs[2]{\int_{k\gamma}^{(k+1)\gamma} \abs{ (\generatorH)^{2} b(\bfrmX_s,\bfrmY_s)}^2 \{ (s-k\gamma)((k+1)\gamma-s)\}^2\rmd s}} [1/2] \\
    \label{eq:hmc_inexact_Y_4}
  & = \gamma^315^{-1/2} \{M_1 + 2 M_4\}^{1/2}
\end{align}
where we used for the last equality that  for any $s \geq 0$, 
$(\bfrmX_s , \bfrmV_s)$ has distribution $\pi\otimes \mu_{0,\Idd}$ and by \eqref{eq:Liouvillesquaredsquared},
$$\mathbb E\left[ \left|\left((\generatorH)^2 b\right)(\bfrmX_s , \bfrmV_s)\right|^2\right]= 
\int\int \left| \left((\generatorH)^2 b\right)(q,p)\right|^2    \varphibf_{d}(p)\, \rmd p\, 
\pi (\rmd q) = 
M_1+2M_4\eqsp .
$$
Combining \eqref{eq:hmc_inexact_Y_3}-\eqref{eq:hmc_inexact_Y_4} in \eqref{eq:hmc_inexact_Y_2} concludes the proof of \eqref{eq:2:lem:asympto_ihmc}.

\end{proof}

\begin{proof}[Proof of \Cref{theo:wasserstein_2_bound}]
  Let $n=T/\gamma$, and let $(\rmX_k,\rmV_k)_{k \ge 0}$ and $(\tilde{\rmX}_k,\tilde{\rmV}_k)_{k \ge 0}$ be defined as in \Cref{lem:asympto_ihmc}. Then by definition of the transition kernels, $\rmX_n$ and $\tilde{\rmX}_n$ have law $ \pi  \rmK_{T}$ and 
$\pi  \rmK_{T,\gamma}$, respectively, and thus
    \begin{equation}
      \label{eq:wasser_hmc_theo_1}
          \wasserstein[2]{ \pi  \rmK_{T}, \pi  \rmK_{T,\gamma}}\leq       \expeE{\abs[2]{    \rmX_{n}-\tilde{\rmX}_{n}}}[1/2] .
    \end{equation}
Now
    consider the sequence in $\rset^2$ defined by
    \begin{equation}
     \label{eq:55}
      z_k  =  \parenthese{ \expeELigne{\absLigne[2]{    \rmX_{k}-\tilde{\rmX}_{k}}}[1/2]\, ,\, \Ltt^{-1/2} \expeELigne{\absLigne[2]{    \rmV_{k}-\tilde{\rmV}_{k}}}[1/2]}^{\transpose} ,\quad k \ge 0\eqsp.
    \end{equation}
    By \Cref{lem:asympto_ihmc}, for any $k \ge 0$, we have
    \begin{equation}
      \label{eq:56}
      z_{k+1}\leq_2\ \rmA z_{k} +\frac{1}{12}\gamma^3 \vec{M} \eqsp,
    \end{equation}
    where $\leq_2$ is the partial order on $\rset^2$ defined by $(u,v)^{\transpose}\leq_2(\tilde u,\tilde v)^{\transpose}$
    if and only if $u\leq \tilde u$ and $v \leq \tilde v$,  
    \begin{align}\nonumber
      \rmA &=
      \begin{pmatrix}
        1+\gamma^2 \Ltt/2 & \gamma L^{1/2} +\gamma^3 \Ltt^{3/2}/4 \\
     \gamma \Ltt^{1/2} +\gamma^3 \Ltt^{3/2}/4& 1+\gamma^2 \Ltt/2 
      \end{pmatrix}
\eqsp \eqsp,\\
 \vec{M} &= \begin{pmatrix}
 (M_2/12)^{1/2}+ M_5^{1/2}/2\\
\frac{1}{2\Ltt^{1/2}}\left( 
 \left(2+\frac{\gamma\Ltt}{2} \right)M_5^{1/2} 
    +   \frac{\gamma \Ltt M_2^{1/2}}{2 \sqrt{12}}+ \frac{(M_1+2M_4)^{1/2}}{15^{1/2}}\right) \end{pmatrix} \eqsp .\label{eq:vecM}
\end{align}
Since $\rmA$ has positive entries, application of $\rmA$ preserves the partial order on $\rset^2$. Noting that $z_0=0$, a straightforward induction 
based on \eqref{eq:56} shows that for any integer $k\ge 0$,
\begin{equation}\label{eq:boundzn}
z_k\leq_2 \gamma^3\sum_{i=0}^{k-1}\rmA^i  \vec{M} . 
\end{equation}
Since $\rmA$ is symmetric with maximal eigenvalue 
\begin{equation}\label{eq:maxev}
\lambda_{\rmA}  = 1+ \gamma \Ltt^{1/2}+\gamma^2 \Ltt/2  +\gamma^3 \Ltt^{3/2}/4 \eqsp,
\end{equation}
by \eqref{eq:boundzn}-\eqref{eq:55}  and the triangle inequality, we have 
\begin{equation}\label{eq:boundznnorm}
\expeE{\abs[2]{    \rmX_{n}-\tilde{\rmX}_{n}}}[1/2]\ \le 
\left|z_n\right| \leq \frac{1}{12}\gamma^3\sum_{k=0}^{n-1}\lambda_{\rmA}^k  |\vec{M}| =  \gamma^3\frac{\lambda_{\rmA}^n-1}{\lambda_{\rmA} -1}|\vec{M}|
\eqsp . 
\end{equation}
The assertion follows from this bound and \eqref{eq:wasser_hmc_theo_1}, because by \eqref{eq:maxev}, $\lambda -1\ge \gamma \Ltt^{1/2}$ and
$\lambda^n\le \exp\left( n \gamma \lambda_{\Hamiltonian}\right)$, and by \eqref{eq:vecM}, $|\vec{M}|^2\le \Ltt^{-1}M_{\Hamiltonian}$.
%
%
\end{proof}


\bibliographystyle{amsplain}
\bibliography{bibliography}

\providecommand{\bysame}{\leavevmode\hbox to3em{\hrulefill}\thinspace}
\providecommand{\MR}{\relax\ifhmode\unskip\space\fi MR }
\providecommand{\MRhref}[2]{%
  \href{http://www.ams.org/mathscinet-getitem?mr=#1}{#2}
}
\providecommand{\href}[2]{#2}
\begin{thebibliography}{10}

\bibitem{abdulle:vilmart:zygalakis:2014}
A.~Abdulle, G.~Vilmart, and K.~C. Zygalakis, \emph{High order numerical
  approximation of the invariant measure of ergodic {SDE}s}, SIAM J. Numer.
  Anal. \textbf{52} (2014), no.~4, 1600--1622. \MR{3229658}

\bibitem{BallyTalay}
V.~Bally and D.~Talay, \emph{The law of the {E}uler scheme for stochastic
  differential equations. {I}. {C}onvergence rate of the distribution
  function}, Probab. Theory Related Fields \textbf{104} (1996), no.~1, 43--60.
  \MR{1367666}

\bibitem{barber:2012}
D.~Barber, \emph{Bayesian reasoning and machine learning}, Cambridge University
  Press, 2012.

\bibitem{BeskosPillaiRobertsSanzSernaStuart}
Alexandros Beskos, Natesh Pillai, Gareth Roberts, Jesus-Maria Sanz-Serna, and
  Andrew Stuart, \emph{Optimal tuning of the hybrid {M}onte {C}arlo algorithm},
  Bernoulli \textbf{19} (2013), no.~5A, 1501--1534. \MR{3129023}

\bibitem{BoSaActaN2018}
N.~B{ou-Rabee} and J.~M. Sanz-Serna, \emph{Geometric integrators and the
  {H}amiltonian {M}onte {C}arlo method}, Acta Numerica \textbf{27} (2018),
  113--206.

\bibitem{bourabee2021mixing}
Nawaf Bou-Rabee and Andreas Eberle, \emph{Mixing time guarantees for unadjusted
  {H}amiltonian {M}onte {C}arlo}, Bernoulli \textbf{29} (2023), no.~1, 75--104.
  \MR{4497240}

\bibitem{BoEbZi2020}
Nawaf Bou-Rabee, Andreas Eberle, and Raphael Zimmer, \emph{Coupling and
  convergence for {H}amiltonian {M}onte {C}arlo}, Ann. Appl. Probab.
  \textbf{30} (2020), no.~3, 1209--1250.

\bibitem{BouRabeeSchuh}
Nawaf Bou-Rabee and Katharina Schuh, \emph{Convergence of {U}nadjusted
  {H}amiltonian {M}onte {C}arlo for mean-field models}, arXiv preprint
  arXiv:2009.08735, 2020.

\bibitem{brosse:et:al:spa::2019}
N.~Brosse, A.~Durmus, {\'E}.~Moulines, and S.~Sabanis, \emph{{The tamed
  Unadjusted Langevin Algorithm}}, Stochastic Processes and their Applications
  \textbf{129} (2019), no.~10, 3638--3663.

\bibitem{chen2020fast}
Yuansi Chen, Raaz Dwivedi, Martin~J Wainwright, and Bin Yu, \emph{Fast mixing
  of {M}etropolized {H}amiltonian {M}onte {C}arlo: Benefits of multi-step
  gradients}, The Journal of Machine Learning Research \textbf{21} (2020),
  no.~1, 3647--3717.

\bibitem{cheng2017underdamped}
Xiang Cheng, Niladri~S. Chatterji, Peter~L. Bartlett, and Michael~I. Jordan,
  \emph{Underdamped {L}angevin {MCMC}: A non-asymptotic analysis}, Proceedings
  of the 31st Conference On Learning Theory (Sébastien Bubeck, Vianney
  Perchet, and Philippe Rigollet, eds.), Proceedings of Machine Learning
  Research, vol.~75, PMLR, 06--09 Jul 2018, pp.~300--323.

\bibitem{chewi2021optimal}
Sinho Chewi, Chen Lu, Kwangjun Ahn, Xiang Cheng, Thibaut Le~Gouic, and Philippe
  Rigollet, \emph{Optimal dimension dependence of the {M}etropolis-adjusted
  {L}angevin algorithm}, Conference on Learning Theory, PMLR, 2021,
  pp.~1260--1300.

\bibitem{Dalalyan}
Arnak~S. Dalalyan, \emph{Theoretical guarantees for approximate sampling from
  smooth and log-concave densities}, J. R. Stat. Soc. Ser. B. Stat. Methodol.
  \textbf{79} (2017), no.~3, 651--676. \MR{3641401}

\bibitem{debortoli2019convergence}
Valentin De~Bortoli and Alain Durmus, \emph{Convergence of diffusions and their
  discretizations: from continuous to discrete processes and back}, arXiv
  preprint arXiv:1904.09808 (2019).

\bibitem{debussche:faou:2012}
A.~Debussche and E.~Faou, \emph{Weak backward error analysis for {SDE}s}, SIAM
  Journal on Numerical Analysis \textbf{50} (2012), no.~3, 1735--1752.

\bibitem{DelMoralSingh22}
P.~Del~Moral and S.~S. Singh, \emph{Backward {I}t\^{o}-{V}entzell and
  stochastic interpolation formulae}, Stochastic Process. Appl. \textbf{154}
  (2022), 197--250. \MR{4493235}

\bibitem{DelMoralSingh20}
Pierre Del~Moral and Sumeetpal~S. Singh, \emph{A backward {I}t\^{o}-{V}entzell
  formula with an application to stochastic interpolation}, C. R. Math. Acad.
  Sci. Paris \textbf{358} (2020), no.~7, 881--886. \MR{4174820}

\bibitem{douc:moulines:priouret:soulier:2018}
R.~Douc, E.~Moulines, P.~Priouret, and P.~Soulier, \emph{Markov chains},
  Springer Series in Operations Research and Financial Engineering, Springer,
  2018.

\bibitem{duane:et:al:1987}
S.~Duane, A.D. Kennedy, B.~J. Pendleton, and D.~Roweth, \emph{Hybrid monte
  carlo}, Physics Letters B \textbf{195} (1987), no.~2, 216--222.

\bibitem{dubey2016variance}
A.~Dubey, S.~J Reddi, B.~P{\'o}czos, A.~J Smola, E.~P Xing, and S.~A.
  Williamson, \emph{{Variance reduction in stochastic gradient Langevin
  dynamics}}, Advances in neural information processing systems \textbf{29}
  (2016), 1154.

\bibitem{durmus2019analysis}
A.~Durmus, S.~Majewski, and B.~Miasojedow, \emph{{Analysis of Langevin Monte
  Carlo via convex optimization}}, The Journal of Machine Learning Research
  \textbf{20} (2019), no.~1, 2666--2711.

\bibitem{durmus:moulines:2016}
A.~Durmus and \'E. Moulines, \emph{Nonasymptotic convergence analysis for the
  unadjusted {L}angevin algorithm}, Ann. Appl. Probab. \textbf{27} (2017),
  no.~3, 1551--1587. \MR{3678479}

\bibitem{durmus:moulines:pereyra:2018}
A.~Durmus, E.~Moulines, and M.~Pereyra, \emph{Efficient {B}ayesian computation
  by proximal {Markov Chain Monte Carlo}: when {L}angevin meets {M}oreau}, SIAM
  Journal on Imaging Sciences \textbf{11} (2018), no.~1, 473--506.

\bibitem{durmus:moulines:bernoulli}
Alain Durmus and \'{E}ric Moulines, \emph{High-dimensional {B}ayesian inference
  via the unadjusted {L}angevin algorithm}, Bernoulli \textbf{25} (2019),
  no.~4A, 2854--2882. \MR{4003567}

\bibitem{DurmusMoulinesSaksman}
Alain Durmus, \'{E}ric Moulines, and Eero Saksman, \emph{Irreducibility and
  geometric ergodicity of {H}amiltonian {M}onte {C}arlo}, Ann. Statist.
  \textbf{48} (2020), no.~6, 3545--3564. \MR{4185819}

\bibitem{eberle:2015}
A.~Eberle, \emph{Reflection couplings and contraction rates for diffusions},
  Probab. Theory Related Fields (2015), 1--36 (English).

\bibitem{eberle2016}
\bysame, \emph{Reflection couplings and contraction rates for diffusions},
  Probability theory and related fields \textbf{166} (2016), no.~3-4, 851--886.

\bibitem{EberleGuillinZimmerAOP}
A.~Eberle, A.~Guillin, and R.~Zimmer, \emph{Couplings and quantitative
  contraction rates for {L}angevin dynamics}, Ann. Probab. \textbf{47} (2019),
  no.~4, 1982--2010.

\bibitem{EberleGuillinZimmerTAMS}
\bysame, \emph{Quantitative {H}arris-type theorems for diffusions and
  {M}ckean--{V}lasov processes}, Transactions of the American Mathematical
  Society \textbf{371} (2019), no.~10, 7135--7173.

\bibitem{eberle:lecture:notes:markov}
Andreas Eberle, \emph{{M}arkov processes}, Lecture Notes University of Bonn,
  https://wt.iam.uni-bonn.de/eberle, 2023.

\bibitem{EberleMajka}
Andreas Eberle and Mateusz~B. Majka, \emph{Quantitative contraction rates for
  {M}arkov chains on general state spaces}, Electronic Journal of Probability
  \textbf{24} (2019).

\bibitem{GivensShortt}
Clark~R. Givens and Rae~Michael Shortt, \emph{A class of {W}asserstein metrics
  for probability distributions}, Michigan Math. J. \textbf{31} (1984), no.~2,
  231--240. \MR{752258}

\bibitem{higham:2000}
D.~J. Higham, \emph{Mean-square and asymptotic stability of the stochastic
  theta method}, SIAM journal on numerical analysis \textbf{38} (2000), no.~3,
  753--769.

\bibitem{JohndrowMattingly}
James~E Johndrow and Jonathan~C Mattingly, \emph{Error bounds for
  approximations of {M}arkov chains used in {B}ayesian sampling}, arXiv
  preprint arXiv:1711.05382 (2017).

\bibitem{Jordan}
Michael~I. Jordan, \emph{Graphical models}, Statist. Sci. \textbf{19} (2004),
  no.~1, 140--155. \MR{2082153}

\bibitem{kloeden2011numerical}
P.E. Kloeden and E.~Platen, \emph{Numerical solution of stochastic differential
  equations}, Stochastic Modelling and Applied Probability, Springer Berlin
  Heidelberg, 2011.

\bibitem{lee2020logsmooth}
Yin~Tat Lee, Ruoqi Shen, and Kevin Tian, \emph{Logsmooth gradient concentration
  and tighter runtimes for {M}etropolized {H}amiltonian {M}onte {C}arlo},
  Conference on learning theory, PMLR, 2020, pp.~2565--2597.

\bibitem{lee2021lower}
\bysame, \emph{Lower bounds on {M}etropolized sampling methods for
  well-conditioned distributions}, Advances in Neural Information Processing
  Systems \textbf{34} (2021), 18812--18824.

\bibitem{leimkuhler:reich:2004}
B.~Leimkuhler and S.~Reich, \emph{Simulating {H}amiltonian dynamics}, no.~14,
  Cambridge university press, 2004.

\bibitem{MajkaMijatovicSzpruch}
Mateusz~B. Majka, Aleksandar Mijatovi\'{c}, and {\L}ukasz Szpruch,
  \emph{Nonasymptotic bounds for sampling algorithms without log-concavity},
  Ann. Appl. Probab. \textbf{30} (2020), no.~4, 1534--1581. \MR{4132634}

\bibitem{mangoubi2017rapid}
O.~Mangoubi and A.~Smith, \emph{Rapid mixing of {H}amiltonian {M}onte {C}arlo
  on strongly log-concave distributions}, arXiv preprint arXiv:1708.07114,
  2017.

\bibitem{MattinglyStuartTretyakov}
Jonathan~C. Mattingly, Andrew~M. Stuart, and M.~V. Tretyakov, \emph{Convergence
  of numerical time-averaging and stationary measures via {P}oisson equations},
  SIAM J. Numer. Anal. \textbf{48} (2010), no.~2, 552--577. \MR{2669996}

\bibitem{McCann}
Robert~J. McCann, \emph{Exact solutions to the transportation problem on the
  line}, R. Soc. Lond. Proc. Ser. A Math. Phys. Eng. Sci. \textbf{455} (1999),
  no.~1984, 1341--1380. \MR{1701760}

\bibitem{neal:1993}
R.~M. Neal, \emph{Bayesian learning via stochastic dynamics}, Advances in
  neural information processing systems, 1993, pp.~475--482.

\bibitem{neal2011mcmc}
\bysame, \emph{{MCMC using Hamiltonian dynamics}}, Handbook of {M}arkov Chain
  {M}onte {C}arlo \textbf{2} (2011), 113--162.

\bibitem{pages2020unajusted}
Gilles Pages and Fabien Panloup, \emph{Unadjusted {L}angevin algorithm with
  multiplicative noise: Total variation and wasserstein bounds}, 2020.

\bibitem{PillaiStuartThiery}
Natesh~S. Pillai, Andrew~M. Stuart, and Alexandre~H. Thi\'{e}ry, \emph{Optimal
  scaling and diffusion limits for the {L}angevin algorithm in high
  dimensions}, Ann. Appl. Probab. \textbf{22} (2012), no.~6, 2320--2356.
  \MR{3024970}

\bibitem{robert:2007}
C.~Robert, \emph{The {B}ayesian choice: from decision-theoretic foundations to
  computational implementation}, Springer Science \& Business Media, 2007.

\bibitem{RobertCasella}
Christian~P. Robert and George Casella, \emph{Monte {C}arlo statistical
  methods}, second ed., Springer Texts in Statistics, Springer-Verlag, New
  York, 2004. \MR{2080278}

\bibitem{RobertsGelmanGilks}
G.~O. Roberts, A.~Gelman, and W.~R. Gilks, \emph{Weak convergence and optimal
  scaling of random walk {M}etropolis algorithms}, Ann. Appl. Probab.
  \textbf{7} (1997), no.~1, 110--120. \MR{1428751}

\bibitem{roberts:tweedie:1996}
G.~O. Roberts and R.~L. Tweedie, \emph{Exponential convergence of {L}angevin
  distributions and their discrete approximations}, Bernoulli \textbf{2}
  (1996), no.~4, 341--363. \MR{1440273 (98j:62014)}

\bibitem{RobertsRosenthal}
Gareth~O. Roberts and Jeffrey~S. Rosenthal, \emph{Optimal scaling for various
  {M}etropolis-{H}astings algorithms}, Statist. Sci. \textbf{16} (2001), no.~4,
  351--367. \MR{1888450}

\bibitem{rossler:2010}
A.~Rößler, \emph{{Runge–Kutta} methods for the strong approximation of
  solutions of stochastic differential equations}, SIAM Journal on Numerical
  Analysis \textbf{48} (2010), no.~3, 922--952.

\bibitem{Talay}
Denis Talay, \emph{Second-order discretization schemes of stochastic
  differential systems for the computation of the invariant law}, Stochastics
  and Stochastic Reports \textbf{29} (1990), no.~1, 13--36.

\bibitem{TalayTubaro}
Denis Talay and Luciano Tubaro, \emph{Expansion of the global error for
  numerical schemes solving stochastic differential equations}, Stochastic
  Anal. Appl. \textbf{8} (1990), no.~4, 483--509 (1991). \MR{1091544}

\bibitem{VillaniTransport}
C.~Villani, \emph{Optimal transport : old and new}, Grundlehren der
  mathematischen Wissenschaften, Springer, Berlin, 2009.

\bibitem{welling2011bayesian}
M.~Welling and Y.~W. Teh, \emph{{Bayesian learning via stochastic gradient
  Langevin dynamics}}, Proceedings of the 28th international conference on
  machine learning (ICML-11), Citeseer, 2011, pp.~681--688.

\bibitem{wu2021minimax}
Keru Wu, Scott Schmidler, and Yuansi Chen, \emph{Minimax mixing time of the
  {M}etropolis-adjusted {L}angevin algorithm for log-concave sampling}, arXiv
  preprint arXiv:2109.13055 (2021).

\bibitem{YangRobertsRosenthal}
Jun Yang, Gareth~O. Roberts, and Jeffrey~S. Rosenthal, \emph{Optimal scaling of
  {R}andom-{W}alk {M}etropolis algorithms on general target distributions},
  Stochastic Process. Appl. \textbf{130} (2020), no.~10, 6094--6132.
  \MR{4140028}

\bibitem{zygalakis:2011}
K.~C. Zygalakis, \emph{On the existence and the applications of modified
  equations for stochastic differential equations}, SIAM Journal on Scientific
  Computing \textbf{33} (2011), no.~1, 102--130.

\end{thebibliography}





\end{document}